\newtheorem{defi}{Definition}[section]
\newtheorem{prop}[defi]{Proposition}
\newtheorem{lem}[defi]{Lemma}
\newtheorem{theo}[defi]{Theorem}
\newtheorem{rem}[defi]{Remark}
\newtheorem{hyp}{Assumption}
\newcommand{\dx}{\mathrm d}
\newcommand{\R}{\mathbb R}
\newcommand{\Z}{\mathbb Z}
\newcommand{\Rdd}{{\mathbb R^{d-1}}}
\newcommand{\E}{\mathbb E}
\newcommand{\LL}{\mathbb L}
\newcommand{\HH}{\mathbb H}
\newcommand{\T}{\mathbb T}
\newcommand{\Td}{{\T\times\R^{d-1}}}
\newcommand{\C}{{\mathcal C_T}}
\newcommand{\z}{^{2\hdots d}}
\newcommand{\phiepss}{\varphi_{\tilde\eta}}
\newcommand{\phieps}{\varphi_\eta}
\def\acknow{ {\underline {Acknowledgement} : We warmly thank Mathias
    Rousset for his careful reading of an early version of the manuscript.}}
\title{Existence, uniqueness and convergence of a particle approximation for the Adaptive
Biasing Force process}
\author{Benjamin~Jourdain$^1$, Tony~Leli\`evre$^2$, Rapha\"el~Roux$^2$}
\date{October 2, 2009}
\begin{document}
\maketitle

{$^1$
Universit\'e Paris-Est,
CERMICS, 6 et 8 avenue Blaise Pascal, 77455 Marne-La-Vall\'ee Cedex 2, France}

{$^2$
Universit\'e Paris-Est, CERMICS, Project-Team MICMAC ENPC-INRIA, 6 et 8
avenue Blaise Pascal, 77455 Marne-La-Vall\'ee Cedex 2, France}

\renewcommand{\thefootnote}{}
\footnotetext{\hspace{-6mm}This work was supported by the french National Research Agency (ANR)
under the programs ANR-08-BLAN-0218-03 BigMC and ANR-09-BLAN MEGAS.} 
\renewcommand{\thefootnote}{\arabic{footnote}}

\abstract{
We study a free energy computation procedure, introduced in
\cite{darve-pohorille-01,henin-chipot-04}, which relies on the long-time
behavior of a nonlinear stochastic
differential equation. This nonlinearity comes from a conditional
expectation computed with respect to one coordinate of the solution. The long-time convergence of the solutions to
this equation has been proved
in \cite{lelievre-rousset-stoltz-08}, under some existence and regularity assumptions.

In this paper, we prove existence and uniqueness under suitable conditions for the nonlinear equation, and
we study a particle approximation technique based on a Nadaraya-Watson estimator of
the conditional expectation. The particle system converges to the solution
of the nonlinear equation if the number of particles goes to infinity
and then the kernel used in the Nadaraya-Watson approximation tends to a
Dirac mass. 

We derive a rate for this convergence, and illustrate it by numerical
examples on a toy model.}
\clearpage

\section*{Introduction}

Free energy computations are an important problem in the field of
molecular simulation (see~\cite{chipot-pohorille-07}). The difficulty of those computations
lies in the fact that most dynamics in molecular simulations are highly
metastable: many free energy barriers prevent a good sampling. We study here the adaptive biasing force (ABF) method, which was introduced in
\cite{darve-pohorille-01,henin-chipot-04} to get rid of those metastabilities.

The typical problems one can think about are the study of a structural
angle in the conformation of a protein, or the measure of the evolution of a chemical reaction.
Mathematically, each configuration of the system is modelized by an
element of a high-dimensional state space $\mathcal D$,
typically an open subset of~$\R^d$, which is endowed with a probability measure, called
the canonical measure. This measure is given by $(\int_\mathcal D e^{-\beta V(x)}\dx x)^{-1}e^{-\beta V(x)}\dx x$, where~$V$ denotes the
potential energy undergone by the physical system, and $\beta$ is
proportional to the inverse of the temperature of the system.

For some $x$ in the state space, one is interested in a particular quantity, denoted by $\xi(x)$, $\xi$ being
assumed to be a smooth function from~$\mathcal D$ to the one-dimensional
torus $\T$. The quantity $\xi(x)$ has to be understood as a coarse-grained
information on the system, which is the relevant information for the
practitioner. In the examples above, $\xi(x)$ would be a structural
angle in a protein with conformation $x$,
or a number measuring the evolution of a chemical system in state $x$.\\
We call {\it free energy} the effective energy associated to the
quantity
$\xi(x)$, that is, the function $A(z)$ such that $e^{-\beta A(z)}\dx z$
is the image measure of the canonical measure by the function $\xi$.
Our objective is to compute numerically the function $A$.
When $\mathcal D=\R^d$, a naive method to do so is to simulate, for a given random variable
$X_0$ and an independent $\R^d-$valued Brownian motion $W$, the process defined by the
(overdamped) Langevin dynamics
\begin{equation}
\label{eq:langevin}\dx X_t=-\nabla V(X_t)\dx t+\sqrt{2\beta^{-1}}\dx W_t,
\end{equation}
 which, under some
regularity assumptions on the potential, is ergodic and admits the canonical measure as
unique invariant measure.
This approach appears to be untractable in practice, since the
convergence to equilibrium is very slow, due to multiple metastabilities
appearing in most problems: typically, a molecule moves
microscopically within times of order $10^{-15}$ seconds, while the
typical time scale of the macroscopic moves is of order $10^{-9}$ seconds.

The idea of the ABF method is to prevent the process $X_t$ from staying in
metastable states by introducing a biasing force which repel $X_t$ from
 the states where it stayed for too long a time. To do this, we use 
the following representation of $A$, that can be
deduced from the co-area formula (see~\cite{lelievre-rousset-stoltz-08}):
\begin{equation}\label{eq:representation_A'}
A'(z)=\E\left[F(X)|\xi(X)=z\right],
\end{equation}
where $X$ is a random variable distributed according to the canonical
measure, and $F$ is the function defined by
\begin{equation}\label{eq:def_F}
F(x)=\frac{\nabla\xi\cdot\nabla
  V}{|\nabla\xi|^2}-\frac1\beta\textrm{div}\left(\frac{\nabla\xi}{|\nabla\xi|^2}\right).
\end{equation}
The function $A'$ is called the {\it mean force}.
Actually, \eqref{eq:representation_A'} also holds when $X$ is
distributed according to the measure
$$\left(\int_\mathcal D e^{-\beta\left(V(x)+W\circ\xi(x)\right)}\dx
  x\right)^{-1}e^{-\beta\left(V(x)+W\circ\xi(x)\right)}\dx x,$$ which is
the canonical measure associated with the
biased potential $V+W\circ\xi$ where $W$ is any smooth function. 

Equation \eqref{eq:representation_A'} leads us to consider the following dynamics, which
should get rid of metastabilities for a well chosen $\xi$ since it
``flattens'' the energy landscape in the $\xi$ direction
(see~\cite{lelievre-rousset-stoltz-08} and Lemma~\ref{lem:chaleur} below for more precise statements):
\begin{align}\label{eq:EDS_ABF}
\begin{cases}
\dx X_t&=-\nabla \left(V-A_t\circ\xi-\beta^{-1}\ln(|\nabla\xi|^{-2})\right)(X_t)|\nabla\xi|^{-2}(X_t)\dx
t+\sqrt{2\beta^{-1}}|\nabla\xi|^{-1}(X_t)\dx W_t,\\
A_t'(z)&=\E\left[F(X_t)|\xi(X_t)=z\right].
\end{cases}
\end{align}
The second equality in \eqref{eq:EDS_ABF} shows that if $X_t$ is
distributed according to the canonical measure associated with the
potential $V-A\circ\xi$, then the biasing force $A_t'$ is actually the 
derivative $A'$ of the free energy, and the first equation in
\eqref{eq:EDS_ABF} consists in a Langevin dynamics associated to the
potential $V-A\circ\xi$. Consequently, the dynamics \eqref{eq:EDS_ABF}
admits a stationary point: $A_t'=A'$ and
$\textrm{Law}(X_t)=(\int_\mathcal D
e^{-V+A\circ\xi}\dx x)^{-1}e^{-V+A\circ\xi}\dx x$.

If we actually have convergence to this stationary state, we have a
method, that should be efficient ({\it i.e.} that should not see the metastabilities), to sample the
canonical measure up to a known perturbation $e^{A\circ\xi}.$ This
algorithm has thus two applications: it allows the computation of the
free energy $A$, and it can be used as an adaptative importance sampling
method for the canonical measure.

The long time behavior of Equation \eqref{eq:EDS_ABF} has been studied
in \cite{lelievre-rousset-stoltz-08}, where it has been proven that
for a sufficiently regular solution, one has, in some sense, an
exponential convergence to the stationary state, with a rate that is
better (for a well chosen $\xi$) than the rate of convergence to equilibrium for \eqref{eq:langevin}.

The practical difficulty in simulating \eqref{eq:EDS_ABF} is to compute the
conditional expectation, which is a highly nonlinear term. Stochastic
differential equations involving conditional expectations have already
been studied, in a case where the conditional
expectation is computed with respect to a random initial condition (see
\cite{talay-vaillant-03,tran-08}) or where the variable whose conditional
expectation is computed is fixed (see \cite{dermoune-03}). Our situation is much
more complex since both the conditioning and the conditioned variables
change with time and are affected by the
previous conditional expectations.

The same difficulty arises in Lagrangian stochastic models which are
commonly used in the simulation of turbulent flows (see \cite{bossy-jabir-talay-09}).
The main difference between the system studied in
\cite{bossy-jabir-talay-09} and~\eqref{eq:EDS_ABF} is that the authors
considers a Langevin dynamics with noise only on the velocity. The lack
of ellipticity then leads to additional difficulties. In our setting we
are able to derive a quantitative error estimate for the particle
discretization while this seems more difficult for Langevin dynamics.

 In this paper, we prove that existence and uniqueness hold for
Equation \eqref{eq:EDS_ABF} under suitable conditions, and we study an approximation of
$X_t$ by an interacting particle system (see Theorems~\ref{th:existence_loi} and \ref{th:approx_particule} below).

The paper is organized as follows. In Section \ref{enonce} we state our
main results.

Section \ref{regularite_unicite} is devoted to some uniqueness and
regularity results. More precisely, we prove that the time marginals of
a solution to Equation \eqref{eq:EDS_ABF} satisfy some partial
differential equation. Then,
under an integrability condition on the initial condition, we
prove uniqueness for the solutions to this equation, so that the
nonlinear term in \eqref{eq:EDS_ABF} is reduced to a bounded drift coefficient. We
thus prove pathwise uniqueness and uniqueness in distribution for the
solutions of \eqref{eq:EDS_ABF}.

Section \ref{dynamique_approchee} is devoted to existence results. More precisely, we introduce a regularization of
the dynamics \eqref{eq:EDS_ABF} involving two parameters $\alpha$ and $\varepsilon$, which is another nonlinear stochastic
differential equation whose nonlinearity is less singular. We prove that strong existence, pathwise uniqueness
and uniqueness in distribution hold for this equation and then we show
that the solutions to this stochastic differential equation converge to some process
which satisfies \eqref{eq:EDS_ABF} in the limit
$(\alpha,\varepsilon)\to(0,0)$, yielding strong existence. We also
prove that this convergence holds with rate $\mathcal O(\alpha+\sqrt\varepsilon)$.

In Section \ref{IPS} we introduce an interacting particle system to
approximate the regularized dynamics, and we prove a propagation-of-chaos
result for this particle system. We also derive a rate of convergence
for this propagation of chaos.

In Section \ref{numerique}, we illustrate the efficiency of the particle
approximation of the ABF
method and the rate of those convergences with some numerical examples
in small dimension.

\acknow

\section*{Notation}
We denote by $\T=\R/\Z$ the one dimensional torus, and for $x\in\R$, we
denote by $\{x\}\in\T$ its projection on $\T$. In the following, we will
work in two different domains $\mathcal D$: $\Td$ or $\T^d$. The case
$\mathcal D=\Td$ will be called the non compact case, and the case
$\mathcal D=\T^d$ will be called the compact case. For $x\in\R^d$, depending on the
case considered, we will also denote by $\{x\}$ the element of $\Td$
(resp.~$\T^d$) defined by $\{x\}=(\{x^1\},x^2,\hdots,x^d)$ (resp. $\{x\}=(\{x^1\},\hdots,\{x^d\})$).

In the following, we will call ``function defined on
$\T$'' (resp. on $\Td$, resp. on~$\T^d$), a $\mathbb Z-$periodical
(resp. $\mathbb Z-$periodical in the first
coordinate, resp. $\Z^d$-periodical) function defined on $\R$ (resp on
$\R^d$). Integrals on $\T$, $\Td$ or $\T^d$ mean integrals on $[0,1)$,
$[0,1)\times\R^{d-1}$ or~$[0,1)^d$.

We denote by $\LL^2(\T^d)$ the space of functions on $\T^d$ whose square
is integrable on $\T^d$, and by $\HH^1(\T^d)$ the space of functions in
$\LL^2(\T^d)$ whose weak gradient is square integrable on $\T^d$. We use
similar notations on $\Td$ and $\T.$

For two functions $f$ and $g$ defined on
$\Td$ or $\T^d$, we denote $f*g$ the convolution {\it with respect to
  the first coordinate}, that is,
$$f*g(x)=\int_\T f(x^1-y^1,x\z)g(y^1,x\z)\dx y^1.$$
If $f$ is defined on $\T$, we also use the  notation $f*g$ to denote $$f*g(x)=\int_\T f(x^1-y^1)g(y^1,x\z)\dx y^1.$$
When $f$ and $g$ are defined on $\mathcal D=\Td$ or $\T^d$, the convolution in {\it all} the coordinates is denoted $f\star g$:
$$f\star g(x)=\int_{\mathcal D} f(x^1-y^1,x\z-y\z)g(y^1,y\z)\dx
y^1\dx y\z.$$

In the following, we call ``probability measure on $\T$'' (resp. on $\Td$,
$\T^d$) a nonnegative $\mathbb Z$-periodical (resp. $\mathbb Z$-periodical with respect to the
first coordinate, $\Z^d$-periodical) measure $\mu$ such that
$\mu([0,1))=1$ (resp $\mu([0,1)\times\R^{d-1})=1$, $\mu([0,1)^d)=1$).

When $\{X\}$ is a random variable taking values in $\T$ (resp. in $\Td$,
$\T^d$), we call
``distribution of $\{X\}$'' or ``law of $\{X\}$'' the probability measure $\mu$ on $\T$ (resp. on $\Td$,
$\T^d$) such that $$\E[f(\{X\})]=\int f(x)\mu(\dx x).$$

For a given probability measure $\mu$ on $\Td$ (resp. a probability density $u$) and
a given bounded function~$g$, we denote $\mu^g$ (resp. $u^g(x^1)\dx x^1$) the
marginal on $\T$ of the measure $g.\mu$ (resp. $g(x)u(x)\dx x$). Namely:
$$\mu^g(A)=\int_{A\times\R^{d-1}}g\dx\mu$$
and $$u^g(x^1)=\int_{\R^{d-1}}g(x^1,x\z)u(x^1,x\z)\dx x\z.$$
In particular, $\mu^1$ is the first coordinate marginal of $\mu$.
When we do not specify the measure in an integral, it is the Lebesgue
measure.

We will need the weighted spaces $$\LL^p(w)=\left\{\psi\in\LL^p(\Td)\textrm{
  s.t. }\|\psi\|_{\LL^p(w)}\stackrel{\rm def}=\left(\int_\Td |\psi|^p
  w\right)^{1/p}<\infty\right\},$$ for $1\leq p<\infty$, and$$\HH^1(w)=\left\{\psi\in\HH^1(\Td)\textrm{
  s.t. }\|\psi\|_{\HH^1(w)}\stackrel{\rm def}=\left(\int_\Td \left(|\psi|^2+|\nabla\psi|^2\right)w\right)^{1/2}<\infty\right\}$$
 with $w(x)=(1+|x\z|^2)^\lambda$, for some $\lambda>(d-1)/2.$ Notice that $w$ does not depend on the first coordinate $x^1$, and that
 there is a positive constant $K$ such that
\begin{equation}\label{eq:domination_w}
\forall x\in\Td,~|\nabla w(x)|\leq2\lambda(1+|x\z|^2)^{\lambda-1}\sum_{i=2}^d|x^i|\leq Kw(x).
\end{equation}

We will use several times the following statement:

\begin{lem}\label{lem:d1V_continu}
For a bounded function $g$, and $u\in\LL^2(w)$ one has, for some
constant $K$,
$$\|u^g\|_{\LL^2(\T)}\leq K\|g\|_{\LL^\infty(\Td)}\|u\|_{\LL^2(w)}.$$
If moreover, $g$ has bounded derivatives and $u\in\HH^1(w)$, then
$$\|u^g\|_{\HH^1(\T)}\leq K\|g\|_{\mathbb W^{1,\infty}(\Td)}\|u\|_{\HH^1(w)}.$$
The same inequalities hold with the non weighted norms in the right-hand
side, for $u$
respectively in $\LL^2(\T^d)$ and $\HH^1(\T^d)$.
\end{lem}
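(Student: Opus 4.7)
The two bounds are essentially repeated applications of Cauchy--Schwarz; the role of the weight $w$ is precisely to make the ``missing'' direction $x\z$ integrable over $\R^{d-1}$.

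My plan for the $\LL^2$ inequality in the non compact case is the following. Using that $g$ is bounded and inserting the factor $w^{1/2}w^{-1/2}$, Cauchy--Schwarz in $x\z$ gives
\begin{equation*}
|u^g(x^1)|\leq\|g\|_{\LL^\infty(\Td)}\left(\int_{\R^{d-1}}|u(x^1,x\z)|^2 w(x\z)\,\dx x\z\right)^{1/2}\left(\int_{\R^{d-1}}\frac{\dx x\z}{(1+|x\z|^2)^\lambda}\right)^{1/2}.
\end{equation*}
The second factor is a finite constant $C_\lambda$ because of the assumption $\lambda>(d-1)/2$. Squaring and integrating in $x^1\in\T$ yields the first inequality with $K=\sqrt{C_\lambda}$.

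For the $\HH^1$ bound I would first show that the weak derivative of $u^g$ on $\T$ is given by
\begin{equation*}
(u^g)'=u^{\partial_1 g}+(\partial_1 u)^g,
\end{equation*}
which is the natural differentiation-under-the-integral formula; this can be justified either directly from the definition of weak derivatives on $\T$ (multiply by a test function, apply Fubini, transfer the $\partial_{x^1}$ onto $gu$ using that $u\in\HH^1(w)$) or by a density argument from smooth functions, where the uniform bounds ensure one can pass to the limit. Once this identity is established, the $\LL^2$ estimate applied with $g$ replaced by $\partial_1 g$ (bounded because $g\in\mathbb W^{1,\infty}$) and with $u$ replaced by $\partial_1 u\in\LL^2(w)$ gives both pieces, each controlled by $K\|g\|_{\mathbb W^{1,\infty}(\Td)}\|u\|_{\HH^1(w)}$. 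Combined with the $\LL^2$ estimate on $u^g$ itself, this yields the $\HH^1(\T)$ bound.

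The compact case is strictly easier: on $\T^{d-1}$ (of finite Lebesgue measure one), no weight is needed and Cauchy--Schwarz in $x\z$ directly gives $|u^g(x^1)|\leq\|g\|_{\LL^\infty}\|u(x^1,\cdot)\|_{\LL^2(\T^{d-1})}$, and the same differentiation-under-the-integral argument transports this to the $\HH^1$ statement. The only technical point worth being careful about is the justification of the formula for $(u^g)'$ at the $\HH^1$ regularity of $u$, but this is standard once one writes the weak formulation and applies Fubini.
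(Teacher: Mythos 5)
Your proposal is correct and follows essentially the same route as the paper: the core step in both is inserting $w^{1/2}w^{-1/2}$ and applying Cauchy--Schwarz in the $x\z$ variable, using $\lambda>(d-1)/2$ to make $\int_{\R^{d-1}}w^{-1}$ finite. The paper dispatches the $\HH^1$ case with ``the proof is similar''; you supply the natural substance of that remark, namely the differentiation-under-the-integral identity $(u^g)'=u^{\partial_1 g}+(\partial_1 u)^g$ and the application of the $\LL^2$ bound to each of the two resulting terms.
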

\begin{proof}
Recall that we assumed $\lambda\geq\frac{d-1}2$, so that $\frac1w$ is
integrable on $\R^d$ : $\int_{\R^d}\frac1w\dx x<\infty$. Consequently, we have the estimation
\begin{align*}\|u^g\|_{\LL^2(\T)}^2&=\int_\T\left|\int_{\R^{d-1}}gu\right|^2\\
&\leq \|g\|_{\LL^\infty(\Td)}^2\int_\T\left(\int_{\R^{d-1}}|u|^2w\int_{\R^{d-1}}\frac1w\right)\\
&\leq K\|g\|_{\LL^\infty(\Td)}^2\|u\|_{\LL^2(w)}^2.
\end{align*}
The proof is similar in the space $\HH^1(w)$.
\end{proof}

In the following, $K$ will denote some positive constant, whose value
can change from line to line.

\section{Assumptions and statement of the main results}\label{enonce}

In this paper, we consider a particular case of Equation
\eqref{eq:EDS_ABF} to simplify the argumentation: we assume~$\beta=1$ (this can be realized by a
  change of variable), $\mathcal
D=\Td$ or $\mathcal D=\T^d$. We consider as reaction coordinate the
first coordinate function
$\xi:\mathcal D\rightarrow\R$ defined by~$\xi(x)=\xi(x^1,x^2,\hdots,
  x^d)=x^1.$ This should not change the theoretical results, but will
  simplify the proofs.
The definition \eqref{eq:def_F} of $F$
is then reduced to $$F=\partial_1V,$$where $V$ is defined on $\T^d$
or $\Td.$

The two settings $\mathcal D=\T^d$ and $\mathcal D=\Td$ will be
respectively called the {\it compact} and the {\it non-compact} case.
Our results hold in both settings, and the proofs are
mostly identical, with some slight additional difficulties in
the non compact case. Thus, in those situations, we only give the proofs in the non-compact case.

With those assumptions, Equation \eqref{eq:EDS_ABF} rewrites

\begin{equation}\label{eq:EDS}\dx X_t=\big(-\nabla
  V(X_t)+\E\left[\partial_1V(X_t)|\{X^1_t\}\right]e_1\big)\dx
  t+\sqrt2\dx W_t,
\end{equation} $e_1$ denoting the first vector in the canonical basis of $\R^d$.
We will call solution to Equation \eqref{eq:EDS} a process
$\{X_t\}$ where $X_t$ satisfies \eqref{eq:EDS}. The initial condition of
\eqref{eq:EDS} is a random variable denoted~$X_0$, and is supposed to be independent of the
Brownian motion~$W$. We denote by $P_0$ the law of~$\{X_0\},$ which is a probability measure on $\mathcal D.$

To ensure the integrability of $\partial_1V(X_t)$, we make the following assumption
:
\begin{hyp}\label{hyp:V} $V$ is a twice continuously
  differentiable function, which has bounded first and second order partial derivatives. 
\end{hyp}
Notice that Assumption \ref{hyp:V} yields boundedness of the drift coefficient
in \eqref{eq:EDS}.
In the compact case, assumption \ref{hyp:V} is satisfied as soon as $V$ is a twice
differentiable function.

We have to make some assumptions on the initial
condition $X_0$. What is needed to prove our results will depend on whether
we consider the compact or the non compact case.
In the compact case, we consider the following assumption:

\begin{hyp}\label{hyp:cond_init}
The probability measure $P_0$ has a density $p_0$ lying in $\LL^2(\T^d)$ and whose first coordinate marginal $p^1_0$ is bounded from below
by a positive constant. (Notice that $p^1_0$ is a probability density on~$\T$.)
\end{hyp}

In the non compact case, we will need a stronger assumption: we have to
control the decay of the initial condition at infinity, so we work in
the weighted space $\LL^2(w)$. We will use, in addition to Assumption \ref{hyp:cond_init}, the following
one:
\begin{hyp}\label{hyp:cond_init_nc}
The density $p_0$ of $P_0$ lies in both $\LL^1(w)$ and $\LL^2(w)$.
\end{hyp}
Notice that Assumptions \ref{hyp:cond_init_nc} implies that $\{X_0\}$ has
finite moments of order less than $2\lambda$, and that Assumption \ref{hyp:V} then yields a control on the corresponding moments of any solution to~\eqref{eq:EDS}, uniformly in $t\in\R$ :
\begin{lem}\label{lem:L1w}
Under Assumptions \ref{hyp:V} and \ref{hyp:cond_init_nc}, on any bounded
  time interval $[0,T],$ the moments of order less than $2\lambda$ of any
  solution $X$ of~\eqref{eq:EDS} are bounded:$$\sup_{0\leq t\leq T}\E[|X_t|^{2\lambda}]<\infty.$$
\end{lem}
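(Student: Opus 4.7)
The key observation is that Assumption \ref{hyp:V} makes the entire drift in \eqref{eq:EDS} uniformly bounded: $\nabla V$ is bounded by hypothesis, and the conditional expectation $\E[\partial_1 V(X_t)\mid\{X_t^1\}]$ is bounded by $\|\partial_1 V\|_{\LL^\infty}$. So writing $M=2\|\nabla V\|_{\LL^\infty}$, any solution satisfies
\begin{equation*}
X_t = X_0 + \int_0^t b_s\,\dx s + \sqrt{2}\,W_t, \qquad |b_s|\leq M \text{ a.s.}
\end{equation*}
Taking absolute values and using the triangle inequality gives $|X_t|\leq |X_0|+MT+\sqrt 2\,|W_t|$ for $t\in[0,T]$, and raising to the power $2\lambda$ (using convexity if $2\lambda\geq 1$, subadditivity otherwise) yields a bound of the form
\begin{equation*}
|X_t|^{2\lambda}\leq C_\lambda\bigl(|X_0|^{2\lambda}+(MT)^{2\lambda}+|W_t|^{2\lambda}\bigr).
\end{equation*}

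Taking expectations, I would control the three terms separately. The Brownian term is standard: $\E[|W_t|^{2\lambda}]\leq C_\lambda T^\lambda$ by classical Gaussian moment bounds, uniformly in $t\in[0,T]$. The deterministic term needs no comment. For the initial condition, I would use the convention that $X_0$ is chosen as a representative of its projection $\{X_0\}$ with first coordinate in $[0,1)$; then $(X_0^1)^2\leq 1$ and hence
\begin{equation*}
|X_0|^{2\lambda}=\bigl((X_0^1)^2+|X_0\z|^2\bigr)^\lambda\leq\bigl(1+|X_0\z|^2\bigr)^\lambda=w(X_0).
\end{equation*}
Therefore $\E[|X_0|^{2\lambda}]\leq\int_{\Td} w\,p_0\,\dx x$, which is finite by Assumption \ref{hyp:cond_init_nc} (the $\LL^1(w)$ integrability of $p_0$).

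There is no real obstacle here: the weight $w$ was precisely designed so that $|x\z|^{2\lambda}\leq w(x)$, and the boundedness of the drift converts the pathwise triangle inequality into a uniform moment estimate on $[0,T]$. Combining the three bounds gives $\sup_{t\in[0,T]}\E[|X_t|^{2\lambda}]<\infty$, as required. The only care needed is the choice of representative of $X_0$ (or equivalently, observing that only the transverse marginal can contribute unbounded mass, since the first coordinate lives in $\T$); this is essentially a book-keeping point rather than a genuine difficulty.
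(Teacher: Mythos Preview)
Your proof is correct and follows essentially the same route as the paper: both exploit that Assumption~\ref{hyp:V} makes the drift uniformly bounded, split $X_t$ into initial condition, bounded drift integral, and Brownian part, and then bound the $2\lambda$-moment term by term. You are more explicit than the paper about why $\E[|X_0|^{2\lambda}]<\infty$ (via the $\LL^1(w)$ hypothesis and the choice of representative with $X_0^1\in[0,1)$), which the paper only sketches in the sentence preceding the lemma.
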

\begin{proof}
This comes from the boundedness of the drift coeficient $b_s(x)=-\nabla V(x)+\E[\partial_1V(X)|X^1=x^1]$, which holds in
regard of Assumption~\ref{hyp:V}. Indeed, we have
$\E[|X_t|^{2\lambda}]=\E[|X_0+\int_0^tb_s(X_s)\dx s+\sqrt 2W_t|^{2\lambda}]\leq
K\left(\E[|X_0|^{2\lambda}]+t^{2\lambda}+t^\lambda\right)$, which is bounded on $[0,T]$.
\end{proof}

According to the following fundamental lemma, the solution to \eqref{eq:EDS} is going to sample
efficiently the coordinate reaction state space $\T.$
\begin{lem}\label{lem:chaleur}
Denote by $P_t$ the law of $\{X_t\}$, where $X_t$ is a solution to Equation~\eqref{eq:EDS}. Then,
$P_t^1$ has a density $p_t^1$, such that $p^1$ satisfies the heat equation on $\T$ with
initial condition $p^1_0.$ Thus,~$p^1$ is uniquely defined on
$\T\times[0,\infty)$, and smooth on $\T\times(0,\infty)$.
\end{lem}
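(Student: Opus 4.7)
The plan is to show that the drift in the first coordinate of~\eqref{eq:EDS} vanishes in conditional expectation given $\{X^1_t\}$, so that the first marginal evolves as pure Brownian motion on $\T$ and its density satisfies the heat equation.

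More precisely, I would start by projecting~\eqref{eq:EDS} on the first coordinate: the process $\{X^1_t\}$ on $\T$ satisfies
\begin{equation*}
\dx \{X^1_t\} = \bigl(-\partial_1 V(X_t) + \E[\partial_1 V(X_t) \mid \{X^1_t\}]\bigr)\, \dx t + \sqrt{2}\, \dx W^1_t.
\end{equation*}
For a smooth test function $f:\T\to\R$, I would apply It\^o's formula to $f(\{X^1_t\})$ and take expectations. Using Assumption~\ref{hyp:V} to control the stochastic integral (the drift is bounded, so the local martingale part is a true martingale), I obtain
\begin{equation*}
\E[f(\{X^1_t\})] - \E[f(\{X^1_0\})] = \int_0^t \E\bigl[f'(\{X^1_s\})\bigl(-\partial_1 V(X_s) + \E[\partial_1 V(X_s)\mid \{X^1_s\}]\bigr)\bigr]\, \dx s + \int_0^t \E[f''(\{X^1_s\})]\, \dx s.
\end{equation*}
The key cancellation then follows from the tower property of conditional expectation: since $f'(\{X^1_s\})$ is measurable with respect to $\sigma(\{X^1_s\})$, we have
\begin{equation*}
\E\bigl[f'(\{X^1_s\})\,\partial_1 V(X_s)\bigr] = \E\bigl[f'(\{X^1_s\})\, \E[\partial_1 V(X_s)\mid \{X^1_s\}]\bigr],
\end{equation*}
so the drift contribution vanishes identically. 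This leaves exactly $\E[f(\{X^1_t\})] - \E[f(\{X^1_0\})] = \int_0^t \E[f''(\{X^1_s\})]\, \dx s$, which means that $P_t^1$ is a weak solution on $\T$ of the heat equation $\partial_t p^1 = \partial_{11} p^1$ with initial datum $P_0^1$.

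Finally, since $p^1_0 \in \LL^1(\T)$ (and in fact bounded below by Assumption~\ref{hyp:cond_init} in the compact case, or with density in $\LL^2(\T)$ after projection in the non-compact case, using Lemma~\ref{lem:d1V_continu}), the heat semigroup on $\T$ provides a unique solution $p^1(t,\cdot) = e^{t\partial_{11}} p^1_0$, which is $C^\infty$ on $\T\times (0,\infty)$ by standard parabolic regularization. The only routine obstacle is to check that the martingale term in It\^o's formula has zero expectation, but this is immediate from the boundedness of $\partial_1 V$ and its conditional expectation under Assumption~\ref{hyp:V}.
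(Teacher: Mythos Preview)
Your proof is correct and follows essentially the same approach as the paper: apply It\^o's formula to $f(\{X^1_t\})$ for smooth periodic $f$, use the tower property (equivalently, that $f'(X^1_t)$ is $\sigma(\{X^1_t\})$-measurable) to cancel the two drift terms, and conclude that $P^1_t$ solves the heat equation on $\T$ in the weak sense, with uniqueness and smoothing for $t>0$ by standard parabolic theory. Your additional remarks invoking Assumption~\ref{hyp:cond_init} and Lemma~\ref{lem:d1V_continu} for the initial datum are not needed for the lemma as stated, but they do no harm.
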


\begin{proof}[Proof of Lemma~\ref{lem:chaleur}]
Let $f$ be a smooth function on $\T$. One has, by It\=o's formula
$$\partial_t\E\left[f(X_t^1)\right]=-\E\left[f'(X_t^1)\partial_1V(X_t)\right]+
\E\left[f'(X_t^1)\E\left[\partial_1V(X_t)|\{X_t^1\}\right]\right]+\E\left[f''(X_t^1)\right].$$
But, $f$ being a function on $\T$, $f'(X_t^1)$ only
depends on $\{X_t^1\}$, so that the two first terms in the right hand
side cancel. Then, it holds
$$\partial_t\E\left[f(X_t^1)\right]=\E\left[f''(X_t^1)\right],$$
which is exactly the heat equation in the weak sense for $t\mapsto p_t^1$,
$p_t^1$ being the
distribution of $\{X_t^1\}$.
For uniqueness and regularity of this solution, see \cite[Chapter XIV]{dautray-lions-99}.
\end{proof}

Lemma~\ref{lem:chaleur} allows us to rewrite equation \eqref{eq:EDS} using
 the distribution of $\{X_t^1\}$. Indeed, since $P_t^1$ has a density, the
 measure given for $A\subset[0,1)$ by $P_t^{\partial_1V}(A)=\E\left[\partial_1V(X_t)\mathbf 1_A(\{X_t^1\})\right]$
 also has a density $p_t^{\partial_1V}.$ We can thus write
\begin{align}\label{eq:EDS_densite}
\begin{cases}
\dx X_t&=\left(-\nabla
V(X_t)+\frac{p_t^{\partial_1V}(X^1_t)}{p_t^1(X^1_t)}e_1\right)\dx t+\sqrt2\dx W_t,\\
P_t&=\text{distribution of }\{X_t\}.
\end{cases}
\end{align}
Moreover, under Assumption \ref{hyp:cond_init} the density $p_t^1$ satisfies $0<\inf_\T p^1_0\leq p_t^1$,
uniformly in time, thanks to the maximum principle. This assumption will
consequently prevent the denominator in the second term of \eqref{eq:EDS_densite} from vanishing.

In view of Equation \eqref{eq:EDS_densite}, a natural particle
approximation of $X_t$ is then obtained using the Nadaraya-Watson
estimator of a conditional expectation (see
\cite{tsybakov-04}), given, for some parameter
$\eta$ and for a positive integer $N$, by the system of $N$
stochastic differential equations
\begin{equation}\label{eq:EDS_N_eps}\dx X_{t,n,N}^\eta=\left(-\nabla
  V(X_{t,n,N}^\eta)+\frac{\sum_{m=1}^N\phieps(X^{\eta,1}_{t,n,N}-X^{\eta,1}_{t,m,N})\partial_1V(X_{t,m,N}^\eta)}
{\sum_{m=1}^N\phieps(X^{\eta,1}_{t,n,N}-X^{\eta,1}_{t,m,N})}e_1\right)\dx t+\sqrt2\dx
W_t^n,~1\leq n\leq N
\end{equation}
where $(W^n_t)$ is a sequence of independent Brownian motions, and
$\phieps$ is a smooth approximation for the Dirac measure at the origin on $\T$.
For the initial condition, we work with the following assumption
\begin{hyp}\label{hyp:cond_init_part}
The initial condition of Equation
\eqref{eq:EDS_N_eps} is $(X_{0,n,N}^\eta)_{0\leq n\leq N}=(X_{0,n})_{0\leq
  n\leq N}$, where $(X_{0,n})_{n\in\mathbb N}$ is a sequence of i.i.d random
variables with density $p_0$, and independent of the Brownian motions $(W_t^n)_{t\geq0}$.
\end{hyp}

We also need an assumption on the shape of $\phieps$. The parameter
$\eta=(\alpha,\varepsilon)$ will be chosen in~$(0,\infty)^2$, and $\phieps$
will have the form 
\begin{equation}\label{eq:tete_phi}
\phieps(x)=\alpha+\psi_\varepsilon(x),
\end{equation}
where $\psi_\varepsilon$ is a sequence of mollifiers on $\T$ as
$\varepsilon\rightarrow0$. Namely, assuming $\varepsilon<1/2$, $\psi_\varepsilon$ is a smooth
non-negative $\Z$-periodical function, such that $\psi_\varepsilon\equiv0$
on $[-1/2,1/2]\setminus[-\varepsilon,\varepsilon]$ and such that
$$\int_{-1/2}^{1/2}\psi_\varepsilon=1.$$
A simple way to construct such a sequence is to consider a
smooth non-negative function $\psi$ defined on~$\R$, with support in $[-1,1]$ such that
$\int_\R\psi=1,$ and then consider the $\Z$-periodization
$\psi_\varepsilon$ of $\psi_\varepsilon=\frac1\varepsilon\psi(\frac.\varepsilon)$ ($\psi_\varepsilon$
is well
defined for $\varepsilon<1/2$).
This example makes the following assumption natural:

\begin{hyp}\label{hyp:phi}The function $\psi_\varepsilon$ satisfies
  $$\|\psi_\varepsilon\|_{\LL^\infty(\T)}\leq\frac K\varepsilon,\textrm{
    and }\|\psi_\varepsilon'\|_{\LL^\infty(\T)}\leq\frac K{\varepsilon^2}.$$
\end{hyp}
The reason for adding a positive
constant $\alpha$ to the mollifier is to avoid singularities at the
denominator in the right-hand side of \eqref{eq:EDS_N_eps}.
Notice that~\eqref{eq:tete_phi} yields strong existence and
uniqueness for~\eqref{eq:EDS_N_eps}, since the drift is globally Lipschitz continuous.

We are going to prove the following two results:

\begin{theo}\label{th:existence_loi}[Existence and uniqueness of the
  solution]
In both the compact and non compact cases, under Assumption \ref{hyp:V}, weak existence holds for Equation
\eqref{eq:EDS}. If $P$ denotes the
distribution of a solution, then for all $s>0$ the time marginals $P_s$ of $P$ admits
a density $p_s$, such that for all $0<t<T$,
\begin{equation}\label{eq:espace}p\in\LL^\infty((t,T),\LL^2(\mathcal
  D))\bigcap\LL^2((t,T),\HH^1(\mathcal D)).\end{equation}
Moreover, under both Assumptions \ref{hyp:V} and
\ref{hyp:cond_init} for the compact case, and under Assumptions
\ref{hyp:V}, \ref{hyp:cond_init} and~\ref{hyp:cond_init_nc} for the non
compact case, strong existence, pathwise uniqueness and uniqueness in distribution also hold, and one
can take $t=0$ in \eqref{eq:espace}.
\end{theo}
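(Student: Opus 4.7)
I would split the statement into three blocks: (1) weak existence plus the $\LL^\infty_t\LL^2_x\cap \LL^2_t\HH^1_x$ regularity of the marginal density for $t>0$; (2) uniqueness for the nonlinear Fokker--Planck equation satisfied by the marginals, then pathwise uniqueness and uniqueness in distribution for the SDE; (3) strong existence via Yamada--Watanabe, together with extension of \eqref{eq:espace} down to $t=0$ under the additional assumptions on $p_0$.

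For block (1) I would follow the approach announced for Section \ref{dynamique_approchee}: replace the conditional expectation in \eqref{eq:EDS} by the Nadaraya--Watson type estimator built from $\phieps=\alpha+\psi_\varepsilon$. Because $\alpha>0$ keeps the denominator bounded below, the resulting drift is Lipschitz in the law of $(X^\eta_t)^1$ for the Wasserstein distance, so a standard fixed-point argument gives strong existence and uniqueness for each $\eta=(\alpha,\varepsilon)$. The density $p^\eta$ then solves a linear Fokker--Planck equation with bounded coefficients, and an energy estimate in $\LL^2(w)$ (resp. $\LL^2(\T^d)$) yields bounds on $\|p^\eta\|_{\LL^\infty_t\LL^2}+\|p^\eta\|_{\LL^2_t\HH^1}$ on $(t,T)$ that are uniform in $\eta$ thanks to parabolic smoothing. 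Combined with the moment bound of Lemma \ref{lem:L1w}, this yields tightness on path space. To identify the limit as a solution of \eqref{eq:EDS}, the key point is that the first marginal $p^{\eta,1}$ still satisfies the heat equation (the computation of Lemma \ref{lem:chaleur} is preserved by the regularization), so the maximum principle provides an $\eta$-uniform lower bound $p^{\eta,1}_t\ge \inf p^1_0>0$; compactness of the numerator $p^{\eta,\partial_1V}$ then follows from the $\HH^1$ estimate and Lemma \ref{lem:d1V_continu}.

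For block (2) I would derive, via It\=o's formula on smooth test functions, the weak Fokker--Planck equation
\begin{equation*}
\partial_t p_t \,=\, \Delta p_t + \operatorname{div}(p_t\nabla V) - \partial_1\!\left(p_t\,\frac{p_t^{\partial_1V}}{p_t^1}\right),
\end{equation*}
and exploit Lemma \ref{lem:chaleur}: $p^1$ is determined entirely by $p^1_0$, so the denominator is a known smooth strictly positive function of $(t,x^1)$ alone and the nonlinearity linearizes into a bounded measurable functional of $p_t$. An $\LL^2(w)$ energy estimate on the difference of two solutions, using \eqref{eq:domination_w} to absorb the weight derivatives, closes uniqueness at the PDE level. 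Uniqueness of marginals then reduces the drift of \eqref{eq:EDS} to a bounded measurable function of $(t,X_t^1)$, and pathwise uniqueness and uniqueness in distribution follow from Girsanov (or Veretennikov). Block (3) is then the classical Yamada--Watanabe argument for strong existence, with \eqref{eq:espace} extended to $t=0$ by re-running the energy estimate from time zero under Assumption \ref{hyp:cond_init} (plus Assumption \ref{hyp:cond_init_nc} in the non-compact case).

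The main difficulty lies in block (1), in passing to the limit $\eta\to 0$ inside the ratio-valued drift: one needs simultaneously an $\eta$-uniform positive lower bound on $\phieps*p^{\eta,1}$, $\eta$-uniform compactness of $\phieps*p^{\eta,\partial_1V}$ forcing the $\LL^2_t\HH^1_x$ estimate to be uniform in $\eta$, and preservation of the weight $w$ in the non-compact case. It is to make all three compatible that the weighted-space framework and Lemma \ref{lem:d1V_continu} are introduced.
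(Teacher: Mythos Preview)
Your overall architecture---regularize via $\phieps$, obtain uniform a priori bounds, pass to the limit for weak existence; prove PDE uniqueness by an $\LL^2(w)$ energy estimate exploiting that $p^1$ solves the heat equation; deduce pathwise and law uniqueness via Girsanov; conclude strong existence by Yamada--Watanabe---is exactly the paper's. But block (1) contains a genuine error.

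You assert that ``the first marginal $p^{\eta,1}$ still satisfies the heat equation (the computation of Lemma \ref{lem:chaleur} is preserved by the regularization)''. It does not. The cancellation in Lemma \ref{lem:chaleur} rests on the identity $\E[f'(X_t^1)\partial_1V(X_t)]=\E\bigl[f'(X_t^1)\,\E[\partial_1V(X_t)\mid X_t^1]\bigr]$; once the conditional expectation is replaced by the smoothed ratio one gets $\int_\T f'\,p_t^{\eta,\partial_1V}$ on one side and $\int_\T f'\,\tfrac{\phieps*p_t^{\eta,\partial_1V}}{\phieps*p_t^{\eta,1}}\,p_t^{\eta,1}$ on the other, and these differ. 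Hence you have no $\eta$-uniform lower bound on $p^{\eta,1}$, and your route for controlling the denominator at the limit collapses. (Note also that invoking $\inf p_0^1>0$ already uses Assumption \ref{hyp:cond_init}, so even if the claim were true it would not yield weak existence under Assumption \ref{hyp:V} alone.)

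The paper bypasses this entirely. The $\eta$-uniform a priori estimates (Lemma \ref{lem:u_L2}) use only the trivial pointwise bound $\bigl|\tfrac{\phieps*p^{\eta,\partial_1V}}{\phieps*p^{\eta,1}}\bigr|\le\|\partial_1V\|_{\LL^\infty}$, valid because the ratio is a $\phieps$-weighted average of values of $\partial_1V$; no lower bound on the denominator is needed. The limit is then identified through the martingale problem (Theorem \ref{th:convergence_eta}): from Aubin--Lions compactness one extracts $p^\eta\to p^0$ in $\LL^1_{t,x}$ and a.e., hence $\phieps*p^{\eta,1}\to p^{0,1}$ and $\phieps*p^{\eta,\partial_1V}\to p^{0,\partial_1V}$ a.e., so the bounded ratio converges a.e.\ and dominated convergence closes the argument. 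Weak existence under only Assumption \ref{hyp:V} is obtained separately (Theorem \ref{th:existence_EDS}) by approximating the \emph{initial condition} by densities satisfying Assumption \ref{hyp:cond_init} and using Girsanov to get $\LL^2$ bounds on $p^k_t$ for $t>0$ uniform in $k$.
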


\begin{theo}\label{th:approx_particule}[Particle approximation of
  the process $X_t$]
In the compact case, under
Assumptions~\ref{hyp:V},~\ref{hyp:cond_init},~\ref{hyp:cond_init_part},
and~\ref{hyp:phi} or in the non-compact case under
the additional Assumption~\ref{hyp:cond_init_nc}, define the
processes $X_{t,n,N}$ by \eqref{eq:EDS_N_eps}.
Then, it holds that, for any positive $T$, and for $\alpha$ and
$\varepsilon$ small enough,
$$\E\left[\int_0^T\left\|\frac{\sum_{n=1}^N\partial_1V(X_{t,n,N}^\eta)
\phieps(.-X_{t,n,N}^{\eta,1})}{\sum_{n=1}^N\phieps(.-X_{t,n,N}^{\eta,1})}-A'_t\right\|_{\LL^\infty(\T)}\dx
t\right]=\mathcal
O\left(\alpha+\sqrt\varepsilon+\frac1{\sqrt N}e^{\frac
  K{\alpha\varepsilon^2}}\right).$$
\end{theo}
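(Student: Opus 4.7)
The plan is to insert an intermediate ``mean--field regularized'' process between the particle system and the true ABF dynamics, and then bound the two resulting errors separately, using the convergence result of Section~\ref{dynamique_approchee} for one term and a propagation--of--chaos estimate at fixed regularization parameter $\eta=(\alpha,\varepsilon)$ for the other.

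More precisely, I would first introduce the nonlinear SDE $\bar X^\eta$ studied in Section~\ref{dynamique_approchee}, whose drift at time $t$ and point $x^1\in\T$ is the regularized mean force
$$b_t^\eta(x^1)=\frac{\bar p_t^{\eta,\partial_1V}*\phieps(x^1)}{\bar p_t^{\eta,1}*\phieps(x^1)},$$
so that the $\mathcal O(\alpha+\sqrt\varepsilon)$ rate established in Section~\ref{dynamique_approchee} yields $\int_0^T\|b_t^\eta-A_t'\|_{\LL^\infty(\T)}\dx t=\mathcal O(\alpha+\sqrt\varepsilon)$. Writing $b_t^{\eta,N}(x^1)$ for the particle ratio appearing in \eqref{eq:EDS_N_eps}, it then suffices by the triangle inequality to prove
$$\E\int_0^T\|b_t^{\eta,N}-b_t^\eta\|_{\LL^\infty(\T)}\dx t=\mathcal O\!\left(\tfrac{1}{\sqrt N}\,e^{K/(\alpha\varepsilon^2)}\right).$$

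To that end I would set up a synchronous coupling: let $\bar X_{t,n}^\eta$, $n=1,\dots,N$, be i.i.d.\ copies of $\bar X^\eta$ driven by the same Brownian motions $W^n$ and with the same initial conditions $X_{0,n}$ as the particles. By \eqref{eq:tete_phi} and the fact that $\int_\T\bar p_t^{\eta,1}=1$, both denominators in $b_t^{\eta,N}$ and $b_t^\eta$ are bounded below by $\alpha$; hence the elementary identity $\tfrac{a}{b}-\tfrac{c}{d}=\tfrac{a-c}{b}+\tfrac{c}{d}\tfrac{d-b}{b}$ reduces the task to controlling, uniformly in $x^1\in\T$, quantities of the form
$$\Delta_t^g(x^1)=\tfrac{1}{N}\sum_{n=1}^N g(x^1,X_{t,n,N}^\eta)-\E[g(x^1,\bar X_t^\eta)],\qquad g\in\{\phieps(x^1-\cdot),\ \partial_1V(\cdot)\phieps(x^1-\cdot)\}.$$
I would split $\Delta_t^g$ into a \emph{coupling error} $\tfrac{1}{N}\sum_n\bigl(g(x^1,X_{t,n,N}^\eta)-g(x^1,\bar X_{t,n}^\eta)\bigr)$ and an \emph{empirical fluctuation} $\tfrac{1}{N}\sum_n g(x^1,\bar X_{t,n}^\eta)-\E[g(x^1,\bar X_t^\eta)]$. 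The latter has pointwise $\LL^2$ error of order $1/(\varepsilon\sqrt N)$ by Chebyshev and the $\|\phieps\|_{\LL^\infty(\T)}\leq K/\varepsilon$ part of Assumption~\ref{hyp:phi}; upgrading to $\LL^\infty(\T)$ by discretizing $\T$ on a mesh of size $\varepsilon^2/\sqrt N$ and using the $x^1$--Lipschitz constant $K/\varepsilon^2$ of $g$ costs only polynomial factors in $\varepsilon$ and $\log N$, easily absorbed into $e^{K/(\alpha\varepsilon^2)}$. For the coupling error, differentiating the ratio $b_t^{\eta,N}$ with respect to a single particle position yields $|\partial_{x_m^1}b_t^{\eta,N}|\leq K/(N\alpha\varepsilon^2)$, so the drift difference driving $X_{t,n,N}^\eta-\bar X_{t,n}^\eta$ is Lipschitz with aggregate constant $K/(\alpha\varepsilon^2)$. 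A Gr\"onwall loop on $\max_n\E[\sup_{s\leq t}|X_{s,n,N}^\eta-\bar X_{s,n}^\eta|]$, with source term given by the empirical fluctuation, delivers the announced $\tfrac{1}{\sqrt N}\exp(KT/(\alpha\varepsilon^2))$.

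The main obstacle is carrying the pointwise $1/\sqrt N$ Monte--Carlo rate uniformly over $x^1\in\T$ without being wrecked by the $\varepsilon^{-2}$ blow--up of $\phieps'$, and simultaneously tracking Lipschitz constants cleanly through the Gr\"onwall step so that the final exponent is exactly $K/(\alpha\varepsilon^2)$. A secondary technical point, specific to the non--compact case, is propagating the weighted moment control of Lemma~\ref{lem:L1w} to both the particle system and the coupled i.i.d.\ copies, which is needed to justify the integrability of all error terms uniformly on $[0,T]$.
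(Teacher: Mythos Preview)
Your proposal is correct and follows essentially the same route as the paper: Theorem~\ref{th:approx_particule} is obtained by combining Theorem~\ref{th:vitesse} (the $\mathcal O(\alpha+\sqrt\varepsilon)$ bound on $\|b_t^\eta-A_t'\|_{\LL^2((0,T),\LL^\infty(\T))}$) with Theorem~\ref{th:conv_N}, whose proof is exactly your synchronous coupling, Lipschitz/Gr\"onwall loop with constant $K/(\alpha\varepsilon^2)$, and i.i.d.\ variance bound $K/\varepsilon^2$ for the empirical fluctuations. Two minor remarks: your covering argument for the $\sup_{x^1\in\T}$ is a legitimate refinement the paper glosses over, and your concern about weighted moments in the non-compact case is unnecessary here since all quantities in the propagation-of-chaos step are bounded by $\|\nabla V\|_{\LL^\infty}$ and $\|\phieps\|_{\LL^\infty}$ regardless of the domain.
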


Theorem \ref{th:existence_loi} is a consequence of Theorem
\ref{th:unicite_EDS} and Corollary \ref{th:existence_EDS} below, and Theorem
\ref{th:approx_particule} is a consequence of Theorems
\ref{th:vitesse} and \ref{th:conv_N} below.

The convergence rate in Theorem \ref{th:approx_particule} is a pretty
bad one, since for a given $N$ it explodes as~$\varepsilon$ goes to
$0$. Consequently, the size $\varepsilon$ of the window has to be chosen carefully
depending on the number $N$ of particles. This is discussed more
precisely in Section \ref{numerique}.

\section{Notion of solution, regularity and uniqueness results}\label{regularite_unicite}

In this section we consider the Fokker-Planck equation associated to the nonlinear
stochastic differential equation~\eqref{eq:EDS} and prove that
uniqueness holds for weak solutions of this partial differential
equation. From this uniqueness result, the study of Equation
\eqref{eq:EDS} can be reduced to the study of a linear stochastic
differential equation. We can thus prove uniqueness for Equation \eqref{eq:EDS}.

Let us derive the Fokker-Planck equation associated to Equation \eqref{eq:EDS}.
Let~$\psi$ be a twice continuously differentiable function. Applying
It\=o's formula and taking the expectation, we obtain that the law $P_t$ of a weak solution $\{X_t\}$ to equation
\eqref{eq:EDS} satisfies
\begin{align}\label{eq:EDP_faible}
\int_\mathcal D\psi(x)\dx P_T(x)&=\int_\mathcal D\psi(x)\dx P_0(x)-\int_0^T\int_\mathcal D\nabla\psi(x)\cdot\nabla V(x)
\dx P_t(x)\dx t+\int_0^T\int_\mathcal D\Delta\psi(x)\dx P_t(x)\dx t\\
&\quad+\int_0^T\int_\mathcal D\partial_1\psi(x)\left(\frac{p_t^{\partial_1V}}{p_t^1}(x^1)\right)\dx
P_t(x)\dx t,\nonumber
\end{align}
which is a weak formulation of the following partial
differential equation
\begin{equation}\label{eq:EDP}\partial_tP_t=\text{div}\left(P_t\nabla V+\nabla
P_t\right)-\partial_1\left(P_t\frac{p_t^{\partial_1V}}{p_t^1}\right),
\end{equation}
with initial condition $P_0.$
Using integration by parts, we introduce a stronger definition for solutions to \eqref{eq:EDP}
which will allow us to prove existence and uniqueness.
\begin{defi}\label{def:sol_EDP}
In the compact case, a function $u$ is said to be a solution to \eqref{eq:EDP} if, for any
positive~$T$,
\begin{itemize}
\item $u$ belongs to $\LL^\infty((0,T),\LL^2(\T^d))\bigcap\LL^2((0,T),\HH^1(\T^d))$ ;
\item for any function $\psi\in\HH^1(\T^d)$, we have:
\begin{equation}\label{eq:EDP_dist}
\partial_t\int_\mathcal D u_t\psi=-\int_\mathcal D u_t\nabla
V\cdot\nabla\psi-\int_\mathcal D\nabla
u_t\cdot\nabla\psi+\int_\mathcal D
u_t\frac{u_t^{\partial_1V}}{u_t^1}\partial_1\psi,
\end{equation}
in the sense of distributions in time ;
\item $u_0=p_0$.
\end{itemize}
In the non compact case, $u$ is said to be a solution to \eqref{eq:EDP},
if, for any positive~$T$,
\begin{itemize}
\item $u$ belongs to
  $\LL^\infty((0,T),\LL^2(w))\bigcap\LL^2((0,T),\HH^1(w))$ ;
\item for any $\psi\in\HH^1(w)$ 
  \begin{equation}\label{eq:EDP_dist_nc}
\partial_t\int_\mathcal D u_t\psi w=-\int_\mathcal D u_t\nabla
V\cdot\left(w\nabla\psi+\psi\nabla w\right)-\int_\mathcal D\nabla
u_t\cdot\left(w\nabla \psi+\psi\nabla
w\right)+\int_\mathcal D
u_t\frac{u_t^{\partial_1V}}{u_t^1}(\partial_1\psi)w,
\end{equation} holds in the sense of distributions
  in time ;
\item $u_0=p_0$.
\end{itemize}
\end{defi}

Notice that \eqref{eq:EDP_dist} is a variational formulation of \eqref{eq:EDP} in the space $\LL^2(\T^d)$
and that \eqref{eq:EDP_dist_nc} is a variational formulation of \eqref{eq:EDP} in the space $\LL^2(w)$.

These conditions make sense. Indeed, in both cases, the conditions on
$u$ and $\psi$ are such that the variational formulations
\eqref{eq:EDP_dist} and \eqref{eq:EDP_dist_nc} are well defined (notice
that one has $|\nabla w|\leq Kw$).
Moreover, for the compact case, if $u$ lies in $\LL^2((0,T),\HH^1(\T^d))$, and
satisfies \eqref{eq:EDP_dist} then $\partial_tu$ lies in $\LL^2((0,T),\HH^{-1}(\T^d))$, so that (see
\cite[page 23]{lions-magenes-68}) $u$ lies in $\mathcal C([0,T],\LL^2(\T^d))$, allowing us to define the value of
$u$ at time $t=0$. The same argument holds for the non compact case.

\subsection{Existence of regular densities for solutions to the
  nonlinear equation}\label{sec:lien_EDS_EDP}

In this section, we consider a solution $X$ to
Equation~\eqref{eq:EDS} and we denote by $P_t$ the law of~$\{X_t\}$. We
show that $P_t$ has a density $p_t$, and that $p$ is a solution to
Equation~\eqref{eq:EDP}, in the sense of Definition~\ref{def:sol_EDP}.

\begin{lem}\label{lem:mild}
Consider both the compact and the non compact cases. Under Assumption
\ref{hyp:V}, for any~$t\geq0$, $P_t$ admits a density
$p_t$ with respect to the Lebesgue measure satisfying the following mild
representation
\begin{equation}\label{eq:mild}
p_t=G_t\star P_0+\int_0^t\nabla G_{t-s}\star(\nabla Vp_s)\dx
s-\int_0^t\partial_1G_{t-s}\star\left(\frac{p_s^{\partial_1V}}{p_s^1}p_s\right)\dx
s,
\end{equation}
 where $G_t$ is the density of $\sqrt2$ times the Brownian motion on
$\mathcal D$, namely $$G_t(x)=\frac1{(4\pi t)^{d/2}}\sum_{k\in\mathbb
  Z}e^{-\frac{|x-ke_1|^2}{4t}}$$ for the non-compact case, and $$G_t(x)=\frac1{(4\pi t)^{d/2}}\sum_{k\in\mathbb
  Z^d}e^{-\frac{|x-k|^2}{4t}}$$ for the compact case.
\end{lem}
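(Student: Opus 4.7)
The plan has two steps: show $P_t$ has a density via Girsanov, then derive \eqref{eq:mild} by applying It\=o's formula to a smooth test function propagated backwards by the heat semigroup. Under Assumption~\ref{hyp:V}, the drift $b_s(x):=-\nabla V(x)+\E[\partial_1V(X_s)\,|\,\{X_s^1\}]\,e_1$ is uniformly bounded by $\|\nabla V\|_\infty+\|\partial_1V\|_\infty$, so Novikov's condition holds. The exponential martingale
\[
M_T=\exp\!\Bigl(-\tfrac{1}{\sqrt 2}\int_0^T b_s(X_s)\cdot dW_s-\tfrac14\int_0^T|b_s(X_s)|^2\,ds\Bigr)
\]
defines a Girsanov change to $\tilde\PP:=M_T\PP$ under which $X_t=X_0+\sqrt 2\,\tilde W_t$ is Brownian motion shifted by the initial data. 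Its law under $\tilde\PP$ is $G_t\star P_0$, which admits the smooth density $x\mapsto\int_\mathcal D G_t(x-y)\,dP_0(y)$; since $\tilde\PP\sim\PP$, the pushforward $P_t$ is absolutely continuous with respect to $G_t\star P_0$ and therefore has a density $p_t$.

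For a smooth test function $\psi$ on $\mathcal D$, set $u(s,x):=(G_{t-s}\star\psi)(x)$, which is smooth on $[0,t)\times\mathcal D$, continuous up to $t$ with $u(t,\cdot)=\psi$, satisfies the backward heat equation $\partial_s u+\Delta u=0$, and has gradient $\nabla u(s,\cdot)=G_{t-s}\star\nabla\psi$ uniformly bounded by $\|\nabla\psi\|_\infty$ on $[0,t]$. It\=o's formula applied to $u(s,X_s)$ on $[0,t-\varepsilon]$, followed by the limit $\varepsilon\to 0$ justified by this boundedness, yields after cancellation of the second-order terms
\[
\psi(X_t)=(G_t\star\psi)(X_0)+\int_0^t\nabla u(s,X_s)\cdot b_s(X_s)\,ds+\sqrt 2\int_0^t\nabla u(s,X_s)\cdot dW_s.
\]
Boundedness of $\nabla u$ makes the stochastic integral a true martingale, so taking expectations gives $\E[\psi(X_t)]=\E[(G_t\star\psi)(X_0)]+\E\int_0^t\nabla u(s,X_s)\cdot b_s(X_s)\,ds$.

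Once $p_s$ is known to exist, the tower property rewrites the drift integral as
\[
-\int_0^t\!\int_\mathcal D\nabla u(s,x)\cdot\nabla V(x)\,p_s(x)\,dx\,ds+\int_0^t\!\int_\mathcal D\partial_1 u(s,x)\,\frac{p_s^{\partial_1V}(x^1)}{p_s^1(x^1)}\,p_s(x)\,dx\,ds,
\]
where $p_s^1$ is smooth on $\T$ by Lemma~\ref{lem:chaleur} and $p_s^{\partial_1V}$ is the Lebesgue density of $P_s^{\partial_1V}$, whose existence follows from $|P_s^{\partial_1V}|\leq\|\partial_1V\|_\infty P_s^1$. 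The convolution identity $\int\nabla(G_{t-s}\star\psi)\cdot f\,dx=-\int\psi\,(\nabla G_{t-s}\star f)\,dx$, following from the evenness of $G_{t-s}$ and integration by parts (with no boundary terms thanks to periodicity in the compact directions and Gaussian decay in the non-compact one), transfers the gradients onto the heat kernel; applied with $f=\nabla V\,p_s$ and $f=e_1\frac{p_s^{\partial_1V}}{p_s^1}p_s$ it yields an identity that holds against every smooth $\psi$, which is precisely \eqref{eq:mild}. The main subtlety I anticipate is the division by $p_s^1$: it causes no real difficulty because the ratio $p_s^{\partial_1V}/p_s^1$ is bounded by $\|\partial_1V\|_\infty$, and because Lemma~\ref{lem:chaleur} ensures $p_s^1>0$ on $\T$ for every $s>0$ by the strong maximum principle for the heat equation.
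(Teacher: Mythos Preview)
Your argument is correct and follows essentially the same route as the paper: propagate a test function backwards by the heat semigroup, apply It\=o's formula so that the Laplacian terms cancel, take expectations, and use the convolution structure (Fubini plus the evenness of $G_{t-s}$) to identify the mild representation. The only notable difference is that you prepend a Girsanov argument to establish the existence of the density $p_t$ first, whereas the paper reads the density off directly from the mild formula itself (the right-hand side of \eqref{eq:mild} is an integrable function, so $P_t$ must have a density); your extra step is sound but not strictly needed.
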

\begin{proof}

Let $\chi$ be a smooth function with compact support on $\Td$ and $T>0.$
Then, for $t\in[0,T]$, the
function $\psi$ defined by
$$\psi_s=G_{t-s}\star\chi,$$ is the unique smooth
solution to the following problem
\begin{align}\label{eq:chaleur_retr}
\begin{cases}
\partial_s\psi&=-\Delta\psi\text{ on }(0,t)\times\Td,\\
\psi_t&=\chi\phantom{-\Delta}\text{ on }\Td.
\end{cases}
\end{align}
Computing $\psi_s(X_s)$ by It\=o's formula and using
\eqref{eq:chaleur_retr} we get 
\begin{align*}
\int_\Td\psi_t\dx P_t&=\int_\Td\psi_0\dx
P_0-\int_0^t\int_\Td\Delta\psi_s\dx P_s\dx
s+\int_0^t\int_\Td\Delta\psi_s\dx P_s\dx s\\
&\quad-\int_0^t\int_\Td\nabla\psi_s\cdot\nabla
V\dx P_s\dx
s+\int_0^t\int_\Td\partial_1\psi_s\frac{p_s^{\partial_1V}}{p_s^1}\dx P_s\dx s\\
&=\int_\Td\psi_0\dx P_0-\int_0^t\int_\Td\nabla\psi_s\cdot\nabla V\dx
P_s\dx
s+\int_0^t\int_\Td\partial_1\psi_s\frac{p_s^{\partial_1V}}{p_s^1}\dx P_s\dx s.
\end{align*}
Using the expression of $\psi_t$ and Fubini's Theorem, we have:
\begin{align*}
\int_\Td\chi \dx P_t&=\int_\Td\chi(G_t\star P_0)+\int_\Td\chi\int_0^t\nabla
G_{t-s}\star(P_s\nabla V)\dx s\\
&\quad-\int_\Td\chi\int_0^t\partial_1G_{t-s}\star\left(\frac{p_s^{\partial_1V}}{p_s^1}P_s\right)\dx s.
\end{align*}
This last equation being true for any smooth function $\chi$ with
compact support, then $P_t$ is given by the right-hand side of
\eqref{eq:mild}, which is an integrable function, so that for any
positive $t$, $P_t$ has a density $p_t$ satisfying \eqref{eq:mild}.
\end{proof}

In regard of the following lemma, $p$ necessarily satisfies
some integrability conditions.
\begin{lem}\label{lem:p_L2}
In both the compact and the non compact case, under Assumptions \ref{hyp:V} and
\ref{hyp:cond_init}, $p$ lies in $\LL^\infty((0,T),\LL^2(\mathcal D))$ for any $T>0$, and we have
$\|p\|_{\LL^\infty((0,T),\LL^2(\mathcal D))}\leq C$, where $C$ is some
constant only depending on $P_0$,
$\nabla V$ and $T$.

In the non compact case, under Assumptions \ref{hyp:V}, \ref{hyp:cond_init} and
\ref{hyp:cond_init_nc}, $p$ lies in $\LL^\infty((0,T),\LL^2(w))$ for any $T>0$, and we have a bound
$\|p\|_{\LL^\infty((0,T),\LL^2(w))}\leq C$, where $C$ is some
constant only depending on $P_0$,
$\nabla V$ and $T$.
\end{lem}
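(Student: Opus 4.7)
The plan is to start from the mild representation~\eqref{eq:mild} established in Lemma~\ref{lem:mild} and extract an integral inequality of Volterra type on $t\mapsto\|p_t\|_{\LL^2(\mathcal D)}$ (resp.\ $t\mapsto\|p_t\|_{\LL^2(w)}$), then close the argument by a singular-kernel Gronwall lemma. The main point that makes this viable is that the nonlinear coefficient
$$x^1\mapsto\frac{p_s^{\partial_1V}(x^1)}{p_s^1(x^1)}$$ is automatically $\LL^\infty$-bounded by $\|\partial_1V\|_{\LL^\infty}$: indeed, by its very definition, $|p_s^{\partial_1V}(x^1)|\leq\|\partial_1V\|_{\LL^\infty}p_s^1(x^1)$, so the ratio is bounded wherever the denominator is positive (and one can set the product to $0$ elsewhere). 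Thus the drift in~\eqref{eq:mild} is uniformly bounded on $[0,T]\times\mathcal D$, with a bound depending only on $\nabla V$.

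The second ingredient is the family of heat-kernel estimates on $\mathcal D$:
$\|G_t\|_{\LL^1(\mathcal D)}=1$ and $\|\nabla G_t\|_{\LL^1(\mathcal D)}\leq K/\sqrt t$ uniformly on $(0,T]$.
For the compact case, combining these bounds with Young's convolution inequality $\|f\star g\|_{\LL^2}\leq\|f\|_{\LL^1}\|g\|_{\LL^2}$ applied term by term in~\eqref{eq:mild} yields
$$\|p_t\|_{\LL^2(\T^d)}\leq\|p_0\|_{\LL^2(\T^d)}+K\int_0^t\frac{\|p_s\|_{\LL^2(\T^d)}}{\sqrt{t-s}}\,\dx s,$$
and a standard generalized Gronwall lemma for weakly singular kernels ($1/\sqrt{t-s}$ is integrable) gives the uniform bound on $[0,T]$.

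For the non compact weighted statement, the only extra work is to show that convolution by $G_t$ (resp.\ $\nabla G_t$) maps $\LL^2(w)$ to itself with operator norm bounded by $K$ (resp.\ by $K/\sqrt t$) on $[0,T]$. The estimate I would use is Peetre-style: the inequality $w(y+z)\leq Kw(y)(1+|z\z|^2)^\lambda$, combined with Jensen (resp.\ Cauchy--Schwarz) applied to the probability measure $G_t(\cdot)\,\dx z$ (resp.\ to the finite positive measure $|\nabla G_t|(\cdot)\,\dx z$), reduces the claim to checking that $\int G_t(z)(1+|z\z|^2)^\lambda\,\dx z$ and $\int|\nabla G_t(z)|(1+|z\z|^2)^\lambda\,\dx z$ are bounded on $[0,T]$ (with an additional $1/\sqrt t$ factor for the gradient), which follows from the Gaussian decay of $G_t$ and the identity $|\nabla G_t(z)|=\frac{|z|}{2t}G_t(z)$. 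Using $\|p_0\|_{\LL^2(w)}<\infty$ from Assumption~\ref{hyp:cond_init_nc} and reasoning as in the compact case, one obtains
$$\|p_t\|_{\LL^2(w)}\leq K\|p_0\|_{\LL^2(w)}+K\int_0^t\frac{\|p_s\|_{\LL^2(w)}}{\sqrt{t-s}}\,\dx s,$$
and concludes again by the singular-kernel Gronwall lemma.

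The main technical obstacle is really only the weighted bounds for the heat-kernel convolutions: once these are in place the Gronwall step is automatic. Everything else is routine given the crucial a~priori bound $|p_s^{\partial_1V}/p_s^1|\leq\|\partial_1V\|_{\LL^\infty}$, which is what makes the nonlinear term behave as a uniformly bounded drift in this estimate.
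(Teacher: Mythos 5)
The structural ingredients of your proposal are all sound and are essentially what the paper uses: the mild representation~\eqref{eq:mild}, the a~priori bound $|p_s^{\partial_1V}/p_s^1|\leq\|\partial_1V\|_{\LL^\infty}$ making the nonlinear term a uniformly bounded drift, the heat-kernel estimates $\|G_t\|_{\LL^1}=1$ and $\|\nabla G_t\|_{\LL^1}\leq K/\sqrt t$, and the Peetre inequality $w(x)\leq Kw(x-y)(1+|y\z|^2)^\lambda$ to transfer weights through the convolution. Your sketch of the weighted boundedness of $f\mapsto G_t\star f$ and $f\mapsto\nabla G_t\star f$ on $\LL^2(w)$ is correct as well.

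There is however a genuine gap where you pass from the integral inequality
$\|p_t\|_{\LL^2}\leq\|p_0\|_{\LL^2}+K\int_0^t\|p_s\|_{\LL^2}(t-s)^{-1/2}\,\dx s$
to the conclusion via a singular Gronwall lemma. Every Gronwall-type lemma, including the weakly singular variants, is an \emph{a priori} estimate: it requires knowing in advance that the unknown function $s\mapsto\|p_s\|_{\LL^2}$ is finite and integrable against the kernel. But a priori one only knows that $p_s$ is a probability density (hence $p_s\in\LL^1$), and nothing rules out $\|p_s\|_{\LL^2}=+\infty$, in which case the inequality is vacuous. This is precisely the difficulty the paper's proof is built around: it first deduces from Lemma~\ref{lem:L1w} a uniform $\LL^1(w)$ bound on $p_s$ (which is free, since $p_s$ is a density with controlled moments), then uses Jensen's inequality inside the convolution — rather than Young's — to obtain, from \eqref{eq:mild}, a uniform bound in $\LL^q(w)$ for any $1\leq q<d/(d-1)$, and finally bootstraps through a finite chain of exponents $q_0<q_1<\dots<q_{n_0}=2$ using Young's inequality at each step. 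The point of this detour is that the initial $\LL^1$ information is genuine, and each step only gains a finite increment of integrability (controlled by the integrability of $|\nabla G|^q\pi$, which forces $q<d/(d-1)$), so the number of steps grows with the dimension. Your direct Young's-inequality argument skips this staircase and so never establishes the a~priori $\LL^2$ finiteness needed to start the Gronwall argument. One could try to fill this by an independent Girsanov-type estimate (as in the proof of Theorem~\ref{th:existence_EDS}), but that gives a bound that blows up at $s=0$ and would then still require an iteration to remove the singularity; as written, the Gronwall step is not justified.
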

We only give the proof of Lemma
\ref{lem:p_L2} in the non compact case, the one in the compact case being similar.

\begin{proof}
The mild formulation \eqref{eq:mild} will allow us to prove that
$u\in\LL^\infty((0,T),\LL^2(w))$.
Since $p_0$ lies in both $\LL^1(w)$ and $\LL^2(w)$, it
lies in $\LL^q(w),$ for any $1\leq q\leq2.$ We first prove that we have
a uniform in time estimate in $\LL^q(w)$, $1\leq q\leq2$, for $p_t$.

 From equation
\eqref{eq:mild}, it follows
\begin{equation}\label{eq:mild_Lpw}\|p_t\|_{\LL^q(w)}\leq\|p_0\|_{\LL^q(w)}+\int_0^t\|\nabla
G_{t-s}\star(\nabla Vp_s)\|_{\LL^q(w)}+\left\|\partial_1G_{t-s}\star\left(\frac{p_s^{\partial_1V}}{p_s^1}p_s\right)\right\|_{\LL^q(w)}\dx s.
\end{equation}
It holds, from Jensen's inequality,

\begin{align*}
\|\nabla G_{t-s}\star(\nabla Vp_s)\|_{\LL^q(w)}^q\leq&K\int_\Td\left(|\nabla
G_{t-s}|\star p_s\right)^qw\\
\leq&K\int_\Td\left(|\nabla
G_{t-s}|^q\star p_s\right)w\\
=&K\int_\Td\int_\Td|\nabla G_{t-s}(y)|^qp_s(x-y)w(x)\dx x\dx y.
\end{align*}
Now, notice that $w(x)\leq K(1+|y\z|^{2\lambda})w(x-y)\stackrel{\rm def}=\pi(y)w(x-y)$, so that 
\begin{align*}
\|\nabla G_{t-s}\star(\nabla Vp_s)\|_{\LL^q(w)}^q\leq&K\int_\Td\int_\Td|\nabla
G_{t-s}(y)|^q\pi(y)p_s(x-y)w(x-y)\dx x\dx y\\
=&K\|p_s\|_{\LL^1(w)}\int_\Td|\nabla G_{t-s}(y)|^q\pi(y)\dx y.
\end{align*}
In view of Lemma \ref{lem:L1w}, $\|p_s\|_{\LL^1(w)}$ is
bounded. Moreover, one has for $0\leq s\leq t\leq T$,
\begin{align*}
|\nabla
G_{t-s}(y)|^q\pi(y)&=\left|\left(4\pi(t-s)\right)^{-d/2}\sum_{k\in\Z}
-\frac{y-ke_1}{2(t-s)}e^{-\frac{|y-ke_1|^2}{4(t-s)}}\right|^q(1+|y\z|^{2\lambda})\\
&\leq \frac K{(t-s)^{q(d+1)/2}}\left|\left(1+\frac{|y\z|^{2\lambda/q}}{(t-s)^{\lambda/q}}\right)\sum_{k\in\Z}\frac{|y-ke_1|}{\sqrt{t-s}}e^{-\frac{|y-ke_1|^2}{4(t-s)}}\right|^q.\\
\end{align*}
Then, since a function $f$ with polynomial growth satisfies $f(x)e^{-x^2}\leq Ke^{-x^2/2}$ for
some constant~$K$, using H\"older's inequality, we deduce,
\begin{align*}
|\nabla
G_{t-s}(y)|^q\pi(y)&\leq\frac K{(t-s)^{q(d+1)/2}}\left|\sum_{k\in\Z}e^{-\frac{|y-ke_1|^2}{8(t-s)}}\right|^q\\
&\leq\frac
K{(t-s)^{q(d+1)/2}}\left|\sum_{k\in\Z}e^{-\frac{q|y-ke_1|^2}{16(t-s)}}\right|\left|\sum_{k\in\Z}e^{-\frac{{q'}|y-ke_1|^2}{16T}}\right|^{\frac
q{q'}}\\
&\leq\frac
K{(t-s)^{q(d+1)/2}}\sum_{k\in\Z}e^{\frac{-q|y-ke_1|^2}{16(t-s)}},
\end{align*}
where $q'$ satisfies $\frac1q+\frac1{q'}=1$.
Consequently, it holds, for $0\leq s\leq t\leq T,$

\begin{equation}\label{eq:gradG_Lq}\left(\int_\Td|\nabla G_{t-s}(y)|^q\pi(y)\dx y\right)^{1/q}\leq\frac K{(t-s)^{(d+1)/2-d/2q}}.\end{equation}
The last term in \eqref{eq:mild_Lpw} can be bounded in the same way, so
we deduce that $\int_0^t\|\nabla
G_{t-s}\star(\nabla Vp_s)\|_{\LL^q(w)}+\left\|\partial_1G_{t-s}\star\left(\frac{p_s^{\partial_1V}}{p_s^1}p_s\right)\right\|_{\LL^q(w)}\dx s$ is finite as soon as
\begin{equation}\label{eq:condition_qw}1\leq
  q<\frac d{d-1}.\end{equation}
In view of \eqref{eq:mild_Lpw}, $p$ lies in
$\LL^\infty((0,T),\LL^q(\Td))$ for all $T$ and all $q$ satisfying
\eqref{eq:condition_qw}, and we have a bound on its norm depending only
on $P_0,$ $\nabla V$ and $T$. We now bootstrap this
estimate to reach a uniform-in-time
$\LL^2(w)$ bound for $p$.

Let $n_0$ be an integer large enough so that
$$\frac{n_0+1}{n_0+1/2}<\frac d{d-1},$$ and define for
$n=0,\hdots n_0$, $q=\frac{n_0+1}{n_0+1/2}$ and $q_n=\left(\frac1q+n\left(\frac1q-1\right)\right)^{-1}.$
Notice that $(q_n)_{n=0\hdots n_0}$ satisfies $q_0=q$, $q_{n_0}=2$ and $$1+\frac1{q_{n+1}}=\frac1{q_n}+\frac1q,$$
so that, according to Young's Inequality, convolution continuously
maps $\LL^{q_n}\times\LL^q$ to $\LL^{q_{n+1}}$.
Consequently, we have for $n<n_0$
\begin{align*}
\|\nabla G_{t-s}\star(\nabla
Vp_s)\|_{\LL^{q_{n+1}}(w)}\leq&K\left(\int_\Td(|\nabla G_{t-s}|\star
    p_s)^{q_{n+1}}(x)w(x)\dx x\right)^{1/{q_{n+1}}}\\
=&K\left(\int_\Td\left(\int_\Td|\nabla G_{t-s}|(y)p_s(x-y)\dx
    y\right)^{q_{n+1}}w(x)\dx x\right)^{1/q_{n+1}}.
\end{align*}
We have $w(x)\leq w(x-y)\pi(y)\leq w(x-y)^{q_{n+1}/q_n}\pi(y)$, since
$q_n\leq q_{n+1},$ yielding, by Young's inequality and the polynomial
growth of $\pi,$
\begin{align*}\|\nabla
G_{t-s}\star(\nabla
Vp_s)\|_{\LL^{q_{n+1}}(w)}\leq&K\left(\int_\Td\left(\int_\Td|\nabla G_{t-s}|(y)\pi(y)^{1/{q_{n+1}}}p_s(x-y)w(x-y)^{1/q_n}\dx
    y\right)^{q_{n+1}}\dx x\right)^{1/q_{n+1}}\\
=&K\|(|\nabla G_{t-s}|\pi^{1/q_{n+1}})\star(p_sw^{1/q_n})\|_{\LL^{q_{n+1}}(\Td)}\\
\leq&K\||\nabla
G_{t-s}|\pi^{1/q_{n+1}}\|_{\LL^q(\Td)}\|p_s\|_{\LL^{q_n}(w)}\\
\leq&\frac K{(t-s)^{(d+1)/2-d/(2q)}}\|p_s\|_{\LL^{q_n}(w)}.
\end{align*}
the last inequality being proved in the same way as \eqref{eq:gradG_Lq} is.
As a result, for $n<n_0$,
$$\|p_t\|_{\LL^{q_{n+1}}(w)}\leq\|p_0\|_{\LL^{q_{n+1}}(w)}+K\int_0^t\frac{\|p_s\|_{\LL^{q_n}(w)}}{(t-s)^{(d+1)/2-d/(2q)}}\dx s.$$
By induction on $n$, since $\frac 1{(t-s)^{(d+1)/2-d/(2q)}}$ is
integrable on $[0,t]$, this estimate shows that $p$ lies in
$\LL^\infty((0,T),\LL^2(w))$, for all positive $T$. Since we control
$\sup_{t\in[0,T]}\|p_t\|_{\LL^{q_0}(w)}$ by a constant depending only on $P_0,$ $\nabla V$
and $T$, we also have such a control on $\sup_{t\in[0,T]}\|p_t\|_{\LL^2(w)}$.

\end{proof}

Now, we prove that $p$ is a solution to Equation \eqref{eq:EDP}  in the
sense of Definition \ref{def:sol_EDP}. First, we show that it satisfies
the regularity condition.

\begin{lem}\label{lem:regularite_EDP}
In the compact case, under Assumptions
\ref{hyp:V} and~\ref{hyp:cond_init}, one has
\begin{equation}\label{eq:regularite}p\in\LL^\infty\left((0,T),\LL^2(\T^d)\right)\bigcap\LL^2\left((0,T),\HH^1(\T^d)\right).\end{equation}
Moreover $\|p\|_{\LL^\infty((0,T),\LL^2(\T^d))}+\|p\|_{\LL^2((0,T),\HH^1(\T^d))}\leq
K$, where $K$ only depends on $\nabla V$, $P_0$ and $T$.

In the non compact case, with the additional Assumption
\ref{hyp:cond_init_nc}, one has
\begin{equation}\label{eq:regularite_nc}p\in\LL^\infty\left((0,T),\LL^2(w)\right)\bigcap\LL^2\left((0,T),\HH^1(w)\right).\end{equation}
Moreover $\|p\|_{\LL^\infty((0,T),\LL^2(w))}+\|p\|_{\LL^2((0,T),\HH^1(w))}\leq
K$, where $K$ only depends on $\nabla V$, $P_0$ and $T$.
\end{lem}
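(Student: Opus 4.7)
The plan is to leverage the $\LL^\infty((0,T),\LL^2)$-bound already furnished by Lemma~\ref{lem:p_L2} and to obtain the missing $\LL^2((0,T),\HH^1)$-part by an energy estimate obtained after testing the weak formulation \eqref{eq:EDP_dist} (resp. \eqref{eq:EDP_dist_nc}) against the solution itself. The crucial preliminary observation is that the nonlinear drift coefficient $\frac{p_t^{\partial_1V}}{p_t^1}$ is bounded uniformly in $(t,x^1)$: Assumption~\ref{hyp:V} gives the pointwise control $|p_t^{\partial_1V}|\leq\|\partial_1V\|_\infty\,p_t^1$, while Lemma~\ref{lem:chaleur} combined with the parabolic maximum principle and Assumption~\ref{hyp:cond_init} propagates the positive lower bound on $p_0^1$ to every $p_t^1$; the ratio is therefore controlled by a constant depending only on $V$ and $p_0^1$.

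In the compact case, testing \eqref{eq:EDP_dist} formally with $\psi=p_t$ produces
\begin{equation*}
\frac12\frac{\dx}{\dx t}\|p_t\|_{\LL^2(\T^d)}^2+\|\nabla p_t\|_{\LL^2(\T^d)}^2=-\int_{\T^d}p_t\,\nabla V\cdot\nabla p_t+\int_{\T^d}p_t\,\frac{p_t^{\partial_1V}}{p_t^1}\,\partial_1p_t.
\end{equation*}
Since both $\nabla V$ and $\frac{p_t^{\partial_1V}}{p_t^1}$ are bounded, Young's inequality absorbs each right-hand side term into $\tfrac14\|\nabla p_t\|_{\LL^2}^2+K\|p_t\|_{\LL^2}^2$, yielding
\begin{equation*}
\frac{\dx}{\dx t}\|p_t\|_{\LL^2}^2+\|\nabla p_t\|_{\LL^2}^2\leq K\|p_t\|_{\LL^2}^2.
\end{equation*}
Integration over $[0,T]$ together with the $\LL^\infty((0,T),\LL^2)$-bound from Lemma~\ref{lem:p_L2} delivers the desired estimate on $\int_0^T\|\nabla p_t\|_{\LL^2(\T^d)}^2\dx t$ with a constant depending only on $\nabla V$, $P_0$ and $T$.

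In the non-compact case I would test \eqref{eq:EDP_dist_nc} with $\psi=p_t$ and treat the extra terms generated by the weight. Each of them is of the form $\int p_t\,(\text{bounded})\,\nabla p_t\cdot\nabla w$ or $\int p_t^2\,(\text{bounded})\cdot\nabla w$, and the pointwise bound $|\nabla w|\leq Kw$ from \eqref{eq:domination_w} reduces them to quantities controlled by $\int p_t|\nabla p_t|\,w$ and $\int p_t^2\,w$. The same Young-inequality absorption then applies, and the weighted $\LL^\infty((0,T),\LL^2(w))$-bound from Lemma~\ref{lem:p_L2} closes the estimate in $\LL^2((0,T),\HH^1(w))$.

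The main obstacle is the rigorous justification of testing the weak formulation against its own solution, since the chain rule $\frac{\dx}{\dx t}\|p_t\|_{\LL^2}^2=2\langle\partial_tp_t,p_t\rangle$ is essentially what we are trying to license. I would circumvent this by exploiting the preliminary observation: because the effective drift $-\nabla V(x)+\frac{p_t^{\partial_1V}(x^1)}{p_t^1(x^1)}e_1$ lies in $\LL^\infty((0,T)\times\mathcal D)$, the density $p$ is actually a weak solution of a \emph{linear} parabolic equation with bounded coefficients and $\LL^2$ (resp. $\LL^2(w)$) initial data. For such an equation the energy identity is standard, obtained either by a Galerkin approximation on which the chain rule holds trivially followed by a passage to the limit, or within the Lions-Magenes framework \cite[page~23]{lions-magenes-68}. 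Implementing this standard machinery in the weighted setting is the most technical part of the argument.
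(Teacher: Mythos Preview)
Your approach is correct and close in spirit to the paper's: both recognise that the effective drift $-\nabla V+\frac{p_t^{\partial_1V}}{p_t^1}e_1$ is bounded, so that $p$ may be treated as the solution of a \emph{linear} parabolic problem for which the $\LL^2((0,T),\HH^1)$-estimate is standard. The paper packages this slightly differently: rather than performing the energy estimate on $p$ directly, it freezes the lower-order terms as a source $f=\mathrm{div}(p\nabla V)-\partial_1\bigl(\frac{p^{\partial_1V}}{p^1}p\bigr)\in\LL^2((0,T),\HH^{-1})$, solves the heat equation $\partial_t v-\Delta v=f$, $v_0=p_0$ via Galerkin to produce a function $v$ with the desired regularity and a priori bounds, and then \emph{identifies $v$ with $p$}.

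That identification is the one step you gloss over. At this point in the argument $p$ is only known to satisfy the mild representation (Lemma~\ref{lem:mild}) and the very weak form \eqref{eq:EDP_faible} against smooth test functions; it is not yet known to lie in $\LL^2((0,T),\HH^1)$, so the Lions--Magenes chain rule cannot be applied to $p$ itself, and the Galerkin procedure a priori manufactures \emph{some} solution $v$, not necessarily $p$. The paper closes this gap by showing that $v$ satisfies the same mild formula \eqref{eq:mild} as $p$, whence $v=p$. Your route needs the same identification---either via the mild representation as in the paper, or via a uniqueness result for distributional solutions of the linearised equation in the class $\LL^\infty((0,T),\LL^2)$---before the energy estimate can be attributed to $p$.
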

\begin{proof}
According to Assumption \ref{hyp:cond_init}, $p_0$ lies in
$\LL^2(\mathcal D)$. Consequently, from Lemma \ref{lem:p_L2}, we know that~$P_t$ has a density $p_t$ such that
$p\in\LL^\infty((0,T),\LL^2(\mathcal D)).$
We now prove that $p$ lies in $\LL^2((0,T),\HH^1(\mathcal D)).$
We know that $p$ lies in $\LL^\infty((0,T),\LL^2(\mathcal
D))\subset\LL^2((0,T),\LL^2(\mathcal D))$, and that $\frac{p^{\partial_1V}}{p^1}$ is in $\LL^\infty([0,T]\times\mathcal D)$, so that the function $f$ defined by 
$$f=\text{div}(p\nabla
V)-\partial_1\left(\frac{p^{\partial_1V}}{p^1}p\right)$$
lies in $\LL^2((0,T),\HH^{-1}(\mathcal D)).$
Consequently, it can be shown, for example using a Galerkin approximation (see
\cite[Chapter XVIII]{dautray-lions-99}) that the problem
\begin{equation}\label{eq:EDP_chaleur}\left\{\begin{array}{rcl}
\partial_tv-\Delta v&=&f,\\
v_0&=&p_0,
\end{array}\right.\end{equation}
admits a unique weak solution $v$ in the space
$\LL^\infty((0,T),\LL^2(\mathcal D)\bigcap\LL^2((0,T),\HH^1(\mathcal D)).$ Here, ``weak solution'' means that for any $\psi$ in
$\HH^1(\mathcal D),$ 
\begin{equation}\label{eq:varia}\partial_t\int_{\mathcal D}\psi
  v_t+\int_{\mathcal D}\nabla\psi\nabla v_t=\int_{\mathcal D}\psi f\end{equation}
holds. Thanks to an {\it a priori} estimate, we can find a bound $K$ depending
only on $\nabla V$, $P_0$ and~$T$,
such that this weak solution lies in the ball of radius $C$ in the
spaces $\LL^\infty((0,T),\LL^2(\mathcal D))$ and
$\LL^2((0,T),\HH^1(\mathcal D))$.
For the non compact case, notice that under Assumption
\ref{hyp:cond_init_nc}, $f$ satisfies for any $\psi\in\HH^1(w),$
\begin{align*}
\left|\int_\Td f\psi w\right|=&\left|\int_\Td p\nabla
V\cdot\nabla(\psi w)-\frac{p^{\partial_1V}}{p^1}p\,\partial_1(\psi
w)\right|\\
\leq&K\int_\Td|p\nabla\psi| w+K\int_\Td|p\psi| w\\
\leq& K\|\psi\|_{\HH^1(w)},
\end{align*}
the last bound being deduced from Lemma \ref{lem:p_L2}.
From the following {\it a priori} estimate,
\begin{align*}
\frac12\partial_t\|v_t\|_{\LL^2(w)}^2=&-\int_\Td\nabla v_t
\nabla(wv_t)+\int_\Td fv_tw\\
\leq&-\int_\Td|\nabla v_t|^2w+K\int_\Td|v_t\nabla
v_t|w+K\|v_t\|_{\HH^1(w)}\\
\leq&-\frac12\|\nabla v_t\|_{\LL^2(w)}^2+K\|v_t\|_{\LL^2(w)}^2+K,
\end{align*}
standard arguments show that $v$ also lies in
$\LL^\infty((0,T),\LL^2(w))\bigcap\LL^2((0,T),\HH^1(w)),$ if $p_0\in\LL^2(w).$

We are now going to show that $v$ is actually equal to the function $p$.
For a fixed $t$ in $[0,T]$, consider $\psi_s=G_{t-s}\star\chi$, solution to the problem
\eqref{eq:chaleur_retr}, where $\chi$ is some test function, and compute
$\partial_s\int_\mathcal D\psi_sv_s$. From \cite[page 261, Lemma 1.2]{temam-79},
we obtain
$$\partial_s\int_{\mathcal D}\psi_sv_s=\int_{\mathcal D}\psi_sf,$$in the sense of distributions.
Using the expression of $\psi_s$, this equation rewrites
$$\partial_s\int_\mathcal D(G_{t-s}\star\chi)v_s=\int_\mathcal D(G_{t-s}\star\chi)f,$$
which is equivalent to 
\begin{equation}\label{eq:identification}\partial_s\int_\mathcal D\chi(G_{t-s}\star v_s)=\int_\mathcal D\chi(G_{t-s}\star
f).\end{equation}
Since $v\in\LL^2((0,T),\HH^1(\mathcal D)),$ and
$\partial_sv\in\LL^2((0,T),\HH^{-1}(\mathcal D))$, then $v$ lies in
$\mathcal C((0,T),\LL^2(\mathcal D))$ (see \cite[Chapter XVIII, \S1, Theorem 1]{dautray-lions-99}, so that the left hand side in
\eqref{eq:identification} is the derivative with respect ot $s$ of a function which is
continuous in $s$. Moreover, one has $$G_{t-s}\star f=\nabla
G_{t-s}\star(p\nabla
V)-\partial_1G_{t-s}\star\left(\frac{p^{\partial_1V}}{p^1}p\right)\in\LL^1((0,t),\LL^2(\mathcal
D)),$$ so that the right hand side in \eqref{eq:identification} is
integrable in time.
Consequently, integrating on $[0,t]$, one finds
\begin{equation*}\int_\mathcal D\chi v_t=\int_\mathcal
  D\chi(G_t\star p_0)+\int_\mathcal D\int_0^t\chi(\nabla G_{t-s}\star(\nabla
Vp_s))\dx
s-\int_\mathcal D\int_0^t\chi\left(\partial_1G_{t-s}\star\left(\frac{p_s^{\partial_1V}}{p_s^1}p_s\right)\right)\dx
s.\end{equation*}
Identifying in the sense of distribution, one has
\begin{equation}\label{eq:mild2}v_t=G_t\star p_0+\int_0^t\nabla G_{t-s}\star(\nabla
Vp_s)\dx
s-\int_0^t\partial_1G_{t-s}\star\left(\frac{p_s^{\partial_1V}}{p_s^1}p_s\right)\dx
s.\end{equation}
The right hand side in \eqref{eq:mild2} is exactly the right hand side
in \eqref{eq:mild}, and \eqref{eq:mild2} holds for all $t>0$, so that
$v=p$, and the regularity we wanted on $p$ actually holds.

\end{proof}

We finish this section by proving:
\begin{lem}\label{lem:lien_EDS_EDP}
The function $p$ satisfies Equation
\eqref{eq:EDP} in the sense of Definition \ref{def:sol_EDP}.
\end{lem}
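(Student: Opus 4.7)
The regularity conditions in Definition~\ref{def:sol_EDP} have been established in Lemma~\ref{lem:regularite_EDP}, so my plan is only to verify the variational identity \eqref{eq:EDP_dist} (resp.\ \eqref{eq:EDP_dist_nc}) and the initial condition $u_0 = p_0$. The starting point is the weak formulation \eqref{eq:EDP_faible}, derived from It\^o's formula, which holds for any $\psi \in \mathcal C^2(\mathcal D)$ having at most polynomial growth (growth being controlled via the moment bound of Lemma~\ref{lem:L1w} in the non-compact case). Using Lemma~\ref{lem:mild}, I rewrite every integral $\int \cdot\, \mathrm d P_t$ as $\int \cdot\, p_t$, and I recall that $p_t^{\partial_1V}/p_t^1$ is bounded thanks to Assumption~\ref{hyp:V}, Lemma~\ref{lem:chaleur} and Assumption~\ref{hyp:cond_init}.

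In the compact case, I integrate by parts in space on the Laplacian term in \eqref{eq:EDP_faible} using $p \in \LL^2((0,T),\HH^1(\T^d))$ (Lemma~\ref{lem:regularite_EDP}) to convert $\int_{\mathcal D}\Delta\psi\, p_t$ into $-\int_{\mathcal D}\nabla\psi\cdot\nabla p_t$. This yields \eqref{eq:EDP_dist} for every $\psi \in \mathcal C^2(\T^d)$, first in integrated form over time, hence in the sense of distributions in time. A density argument then extends the identity to all $\psi \in \HH^1(\T^d)$: each term of \eqref{eq:EDP_dist} depends linearly and continuously on $\psi \in \HH^1(\T^d)$, using the $\LL^\infty\LL^2 \cap \LL^2\HH^1$ regularity of $p$, the boundedness of $\nabla V$ and the boundedness of $p^{\partial_1V}/p^1$.

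The non-compact case follows the same strategy but with the weight built in. Instead of testing \eqref{eq:EDP_faible} against $\psi$, I test against $\tilde\psi = \psi w$ where $\psi$ is smooth and compactly supported, so that $\tilde\psi \in \mathcal C^2$ with polynomial growth (so It\^o's formula and Fubini apply in view of Lemma~\ref{lem:L1w}). Using $\nabla\tilde\psi = w\nabla\psi + \psi\nabla w$, $\partial_1 w = 0$ (since $w$ does not depend on $x^1$), and integrating by parts on the Laplacian term using $p \in \LL^2((0,T),\HH^1(w))$, I obtain precisely \eqref{eq:EDP_dist_nc} for such $\psi$. I then extend by density of smooth compactly supported functions in $\HH^1(w)$; continuity of each term in $\psi \in \HH^1(w)$ comes from $|\nabla w| \le Kw$ (see \eqref{eq:domination_w}), Lemma~\ref{lem:p_L2}, and the bound on $p^{\partial_1V}/p^1$. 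The main technical point is to control this last nonlinear term under the weight, which I handle via Lemma~\ref{lem:d1V_continu}.

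Finally, the initial condition $u_0 = p_0$: once \eqref{eq:EDP_dist} (resp.\ \eqref{eq:EDP_dist_nc}) is established, every term on the right-hand side defines an element of $\LL^2((0,T),\HH^{-1}(\mathcal D))$ (resp.\ the weighted analogue), so $\partial_t p$ lies in that space. Combined with $p \in \LL^2((0,T),\HH^1)$, this gives $p \in \mathcal C([0,T],\LL^2)$ by \cite[page 23]{lions-magenes-68}. The trace $p_{|t=0}$ is then well defined, and the mild representation \eqref{eq:mild} shows that $p_t \to p_0$ as $t\to 0$, since $G_t \star P_0 \to P_0$ and the two time-integral terms vanish (the integrability of the kernels used in the proof of Lemma~\ref{lem:p_L2} shows they are $o(1)$ as $t\to 0$). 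The most delicate step in this whole plan is justifying the $\HH^1(w)$-density extension of \eqref{eq:EDP_dist_nc}, which relies essentially on the weighted estimates already proved in Lemma~\ref{lem:p_L2} and Lemma~\ref{lem:regularite_EDP}.
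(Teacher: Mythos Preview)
Your proposal is correct and follows essentially the same approach as the paper: invoke Lemma~\ref{lem:regularite_EDP} for the regularity, start from the It\^o-derived weak formulation~\eqref{eq:EDP_faible}, and pass to general test functions in $\HH^1(\T^d)$ (resp.\ $\HH^1(w)$) by density. The paper's own proof is considerably terser---it does not spell out the integration by parts that converts $\int\Delta\psi\,p_t$ into $-\int\nabla\psi\cdot\nabla p_t$, nor the device of testing against $\psi w$ in the non-compact case, nor the verification of the initial condition---but these are exactly the steps you have written out, and they are the natural way to fill in the paper's sketch.
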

\begin{proof}
According to Lemma~\ref{lem:regularite_EDP}, in the compact case
(resp. in the non compact case), for any $t>0$, $p$ lies in $\LL^\infty((0,T),\LL^2(\T^d)\bigcap\LL^2((0,T),\HH^1(\T^d))$ (resp. in
$\LL^\infty((0,T),\LL^2(w)\bigcap\LL^2((0,T),\HH^1(w))$).
 Moreover, thanks to It\=o's Formula, $p$
satisfies Equation~\eqref{eq:EDP_faible} for any smooth test function
$\psi$. But, according to the regularity of~$p_t$, and by the density of
smooth functions in $\HH^1(\T^d)$ (resp. in $\HH^1(w)$), Equation~\eqref{eq:EDP_dist}
holds for any $\psi$ in $\HH^1(\T^d)$ (resp. \eqref{eq:EDP_dist_nc} holds
for any $\psi$ in $\HH^1(w)$). This means
that $p_t$ is a solution to \eqref{eq:EDP} in the sense of Definition \ref{def:sol_EDP}. 
\end{proof}

\subsection{Uniqueness results}
In this section we prove that uniqueness holds for solutions of Equation
\eqref{eq:EDP} in the sense of Definition~\ref{def:sol_EDP}, yielding
uniqueness for solutions of the nonlinear equation \eqref{eq:EDS}.
\subsubsection{Uniqueness for the Fokker-Planck Equation}

\begin{theo}\label{th:unicite_EDP}In the compact case, under Assumptions
  \ref{hyp:V} and \ref{hyp:cond_init} or in the non compact case under
  Assumptions \ref{hyp:V}, \ref{hyp:cond_init} and \ref{hyp:cond_init_nc}, uniqueness holds for the
  solutions to the Fokker-Planck
  equation \eqref{eq:EDP} in the sense of
Definition~\ref{def:sol_EDP}.
\end{theo}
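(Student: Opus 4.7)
The plan is to let $u,\tilde u$ be two solutions to \eqref{eq:EDP} in the sense of Definition \ref{def:sol_EDP}, set $r = u-\tilde u$, derive a linear Fokker--Planck equation for $r$ with $r_0=0$, perform an $\LL^2$-energy estimate, and conclude by Gronwall. The main obstacle will be the nonlocal term involving $r^{\partial_1V}/p^1$. The first step is to observe that both solutions share the same first marginal: testing the variational formulation \eqref{eq:EDP_dist} with $\psi(x)=f(x^1)$, the convection term $-\int u\,\nabla V\cdot\nabla\psi = -\int_\T f'\,u^{\partial_1V}$ cancels exactly with the nonlinear term $\int u\,(u^{\partial_1V}/u^1)\,\partial_1\psi = \int_\T f'\,u^{\partial_1V}$ (since the integral of $u/u^1$ in $x\z$ equals $1$). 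What remains is the weak heat equation for $u^1$ on $\T$, whose uniqueness with initial datum $p_0^1$ forces $u^1 = \tilde u^1 =: p^1$. By the maximum principle $p_t^1 \geq \inf_\T p_0^1 > 0$ uniformly in $t$, so $1/p^1$ is bounded on $[0,T]\times\T$.

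Then $r$ satisfies, weakly, $\partial_t r = \Delta r - \mathrm{div}(r\nabla V) + \partial_1(r\,u^{\partial_1V}/p^1 + \tilde u\,r^{\partial_1V}/p^1)$, with $r_0=0$. Since $r\in\LL^\infty((0,T),\LL^2)\cap\LL^2((0,T),\HH^1)$ by Lemma \ref{lem:regularite_EDP}, I may take $\psi=r_t$ as a test function (multiplied by the weight $w$ in the non-compact case) and apply the chain rule to obtain
$$\tfrac12\partial_t\|r_t\|^2 + \|\nabla r_t\|^2 = I_1+I_2+I_3,$$
where $I_1,I_2$ are the local terms, each dominated by $\epsilon\|\nabla r_t\|^2 + C_\epsilon\|r_t\|^2$ thanks to boundedness of $\nabla V$ and of $1/p^1$, and $I_3 = \int \tilde u_t\,(r_t^{\partial_1V}/p_t^1)\,\partial_1 r_t$ is the critical nonlocal contribution.

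The heart of the proof is the control of $I_3$. Since $r^{\partial_1V}/p^1$ depends only on $x^1$, Lemma \ref{lem:d1V_continu}, together with the lower bound on $p^1$ and the smoothness of $p^1$ coming from Lemma \ref{lem:chaleur}, yields $\|r^{\partial_1V}/p^1\|_{\LL^2(\T)}\leq C\|r\|_{\LL^2}$ and $\|r^{\partial_1V}/p^1\|_{\HH^1(\T)}\leq C\|r\|_{\HH^1}$. Combined with the one-dimensional Gagliardo--Nirenberg inequality $\|f\|_{\LL^\infty(\T)}^2\leq C\|f\|_{\LL^2(\T)}\|f\|_{\HH^1(\T)}$, this gives $\|r^{\partial_1V}/p^1\|_{\LL^\infty(\T)}\leq C\|r\|_{\LL^2}^{1/2}\|r\|_{\HH^1}^{1/2}$. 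Setting $M:=\sup_{[0,T]}\|\tilde u_t\|_{\LL^2}<\infty$ (finite by Lemma \ref{lem:regularite_EDP}), Cauchy--Schwarz produces $|I_3|\leq CM\|r_t\|_{\LL^2}^{1/2}\|r_t\|_{\HH^1}^{1/2}\|\nabla r_t\|_{\LL^2}$. Writing $A=\|r_t\|_{\LL^2}$, $B=\|\nabla r_t\|_{\LL^2}$ and using $(A^2+B^2)^{1/4}\leq A^{1/2}+B^{1/2}$, this splits as $CM(AB + A^{1/2}B^{3/2})$. Applying Young's inequality with exponents $4$ and $4/3$ to the second piece, with a small weight in front of $B^{3/2}$, makes the coefficient of $B^2$ arbitrarily small at the cost of a large $A^2$ coefficient, so $|I_3|\leq\epsilon B^2+C(\epsilon,M)A^2$.

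Choosing $\epsilon$ small enough that the total $B^2$ contribution from $I_1,I_2,I_3$ is strictly less than $1$, I conclude $\partial_t\|r_t\|^2\leq C\|r_t\|^2$; since $r_0=0$, Gronwall gives $r\equiv 0$. The non-compact case is handled identically with $\LL^2(w)$ and $\HH^1(w)$ norms, the extra terms arising from $\nabla w$ in the weighted formulation \eqref{eq:EDP_dist_nc} being harmless thanks to $|\nabla w|\leq Kw$ from \eqref{eq:domination_w}. The crucial and most delicate step is clearly the treatment of $I_3$: without the Gagliardo--Nirenberg interpolation, which exploits the one-dimensionality of the reaction coordinate, the $\LL^2$-boundedness of $\tilde u_t$ alone would not be enough to absorb the gradient on the right-hand side.
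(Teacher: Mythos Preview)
Your proof is correct and follows essentially the same route as the paper: both use that $u^1=\tilde u^1$ solves the heat equation (hence is bounded below), perform a weighted $\LL^2$ energy estimate on the difference, and control the nonlocal term via a one-dimensional interpolation inequality on $\T$ (you use the Agmon/Gagliardo--Nirenberg form $\|f\|_{\LL^\infty}^2\leq C\|f\|_{\LL^2}\|f\|_{\HH^1}$, while the paper uses the essentially equivalent $\|f\|_{\LL^\infty}\leq C\|f\|_{\LL^2}^{1/2-\gamma}\|f\|_{\HH^1}^{1/2+\gamma}$ obtained from the embedding $\HH^{1/2+\gamma}(\T)\hookrightarrow\mathcal C(\T)$). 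Two minor clean-ups: the regularity of $r$ is part of Definition~\ref{def:sol_EDP} itself rather than a consequence of Lemma~\ref{lem:regularite_EDP}, and it is safer to apply the interpolation inequality to $r^{\partial_1V}$ and then multiply by the bounded factor $1/p^1$ (as the paper does) instead of estimating $\|r^{\partial_1V}/p^1\|_{\HH^1(\T)}$ directly, since the latter would require a uniform-in-time bound on $\partial_1 p^1_t$ that is not guaranteed near $t=0$.
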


\begin{proof}
We only give the proof in the non compact case, which can be
adapted straightforwardly for the compact case by
performing the same computations in the space $\LL^2(\T^d)$.
Let $u$ and $v$ be two solutions of \eqref{eq:EDP} in the sense of
Definition \ref{def:sol_EDP} with same initial condition $u_0=v_0$. We use Gr\"onwall's
Lemma to prove that $\|u_t-v_t\|_{\LL^2(w)}=0$ for all $t>0$. 
Adapting the proof of \cite[page 261, Lemma 1.2]{temam-79}, one has
$\frac12\partial_t\|u_t-v_t\|^2_{\LL^2(w)}=\int_{\Td}(u_t-v_t)\partial_t(u_t-v_t)w$.
Consequently, since $u$ and $v$ satisfy Definition~\ref{def:sol_EDP}, and using \eqref{eq:domination_w} and
Assumption \ref{hyp:V}, it
holds that
\begin{align*}
\frac12\partial_t\|u_t-v_t\|_{\LL^2(w)}^2
&\leq K\|u_t-v_t\|_{\LL^2(w)}^2+K\|u_t-v_t\|_{\LL^2(w)}\|\nabla u_t-\nabla v_t\|_{\LL^2(w)}-\|\nabla
u_t-\nabla v_t\|^2_{\LL^2(w)}\\
&\quad+\int_{\Td}\partial_1(u_t-v_t)\left(u_t\frac{u_t^{\partial_1V}}{u_t^1}-v_t\frac{v_t^{\partial_1V}}{v_t^1}\right)w.
\end{align*}
We want to estimate the last term.
Notice that, thanks to Lemma~\ref{lem:chaleur}, $u^1=v^1$, so that 
\begin{align*}
\int_{\Td}\partial_1(u_t-v_t)\left(u_t\frac{u_t^{\partial_1V}}{u_t^1}-v_t\frac{v_t^{\partial_1V}}{v_t^1}\right)w
&=\int_{\Td}\partial_1(u_t-v_t)u_t\frac{u_t^{\partial_1V}-v_t^{\partial_1V}}{u_t^1}w\\
&\quad+\int_{\Td}\partial_1(u_t-v_t)(u_t-v_t)\frac{v_t^{\partial_1V}}{u_t^1}w.
\end{align*}
Since $\partial_1V$ is bounded, the second term in the right-hand side
is smaller than
$$K\|u_t-v_t\|_{\LL^2(w)}\|\nabla u_t-\nabla v_t\|_{\LL^2(w)},$$
and the first term is smaller than
$$\|\nabla
u_t-\nabla v_t\|_{\LL^2(w)}\left(\int_{\Td}
  \left(\frac{u_t}{u_t^1}\right)^2\left(u_t^{\partial_1V}-v_t^{\partial_1V}\right)^2w\right)^{1/2}.$$
Then,
\begin{align*}
\left(\int_{\Td}
  \left(\frac{u_t}{u_t^1}\right)^2\left(u_t^{\partial_1V}-v_t^{\partial_1V}\right)^2w\right)^{1/2}&=\left(\int_\T
  \left(\frac{u_t^{\partial_1V}-v_t^{\partial_1V}}{u_t^1}\right)^2\left(\int_{\R^{d-1}}(u_t)^2w\right)\right)^{1/2}\\
&\leq\left\|\frac1{u_t^1}(u_t^{\partial_1V}-v_t^{\partial_1V})\right\|_{\LL^\infty(\T)}\|u_t\|_{\LL^2(w)}.
\end{align*}
The function $t\mapsto\|u_t\|_{\LL^2(w)}$
is bounded on $[0,T]$, and, thanks to Lemma~\ref{lem:chaleur}, Assumption \ref{hyp:cond_init} and the maximum
principle, $u^1$ is bounded from below by some positive constant, so that
$$\left(\int_{\Td}\left(\frac{u_t}{u_t^1}\right)^2
\left(u_t^{\partial_1V}-v_t^{\partial_1V}\right)^2w\right)^{1/2}\leq K\|u_t^{\partial_1V}-v_t^{\partial_1V}\|_{\LL^\infty({\T})}.$$
To conclude, notice that, for any positive $\gamma$,
$\HH^{1/2+\gamma}(\T)$ continuously imbeds in $\mathcal
C(\T)$ (see \cite[page 217]{adams-78}). Consequently, interpolating $\HH^1(\T)$
and $\LL^2(\T)$ (see \cite[Page 49]{lions-magenes-68}, we obtain for a function $f$ in $\HH^1(\T)$ and $\gamma\in(0,\frac12)$,
\begin{equation}\label{eq:interpolation}
\|f\|_{\LL^\infty(\T)}\leq K\|f\|_{\HH^{1/2+\gamma}(\T)}\leq
K\|f\|_{\LL^2(\T)}^{1/2-\gamma}\|f\|_{\HH^1(\T)}^{1/2+\gamma}.
\end{equation}
All the previous inequalities give us
\begin{align*}
\frac12\partial_t\|u_t-v_t\|^2_{\LL^2(w)}+\|\nabla u_t-\nabla
v_t\|_{\LL^2(w)}^2&\leq K\|u_t-v_t\|_{\LL^2(w)}\|\nabla u_t-\nabla v_t\|_{\LL^2(w)}+K\|u_t-v_t\|_{\LL^2(w)}^2\\
&\quad+K\|u_t^{\partial_1V}-v_t^{\partial_1V}\|_{\LL^2(\T)}^{1/2-\gamma}\|u_t^{\partial_1V}-v_t^{\partial_1V}\|_{\HH^1(\T)}^{1/2+\gamma}\|\nabla
u_t-\nabla v_t\|_{\LL^2(w)}.\end{align*}
We finally obtain, from Lemma \ref{lem:d1V_continu} and Young's
inequality $ab\leq\varepsilon a^p+q^{-1}(p\varepsilon)^{-q/p}b^q$, holding
true for any positive $a$, $b$, $\varepsilon$,
$p$ and $q$ such that $\frac1p+\frac1q=1,$
$$\partial_t\|u_t-v_t\|^2_{\LL^2(w)}+\|\nabla u_t-\nabla v_t\|_{\LL^2(w)}^2\leq K\|u_t-v_t\|_{\LL^2(w)}^2,$$
yielding uniqueness through Gr\"onwall's lemma.
\end{proof}

\begin{rem}\label{rem:unicite_nc}
A more natural uniqueness proof can be performed, using an
entropy estimate. In particular, this proof does not require the
introduction of the weighted spaces. Unfortunately, it does not apply to
the solutions in the sense of Definition \ref{def:sol_EDP}. Uniqueness actually
holds in the subspace of functions such that the following computations
make sense.

 Let $u$ and $v$ be two
solutions of \eqref{eq:EDP} with same initial condition $u_0=v_0$. Notice that from Lemma~\ref{lem:chaleur}, the
functions $u^1$ and $v^1$ are equal.
Define the relative entropy of $u$ with respect to $v$:
$$E(t)=\int_\Td u\log\frac uv.$$
If all quantities involved are finite, it holds that

\begin{align*}
E'(t)&=\partial_t\left(\int_\Td u\right)+\int_\Td\partial_tu~\log\frac uv-\int_\Td \partial_t v~\frac uv\\
&=0-\int_\Td u\nabla V~\nabla\log\frac uv-\int_\Td\nabla u~\nabla\log\frac
uv+\int_\Td u\frac{u^{\partial_1V}}{u^1}\partial_1\log\frac uv\\
&\quad+\int_\Td v\nabla V~\nabla\frac uv+\int_\Td \nabla
v~\nabla\frac uv-\int_\Td v\frac {v^{\partial_1V}}{u^1}\partial_1\frac uv\\
&=-\int_\Td \frac{v^2}u\left|\nabla\frac
  uv\right|^2+\int_\Td\left(u^{\partial_1V}-v^{\partial_1V}\right)~\frac
v{u^1}\partial_1\frac uv.
\end{align*}
But, using Csisz\'ar-Kullback inequality, it holds
\begin{align*}
\int_\Td\left(u^{\partial_1V}-v^{\partial_1V}\right)~\frac
v{u^1}\partial_1\frac uv
&\leq K\int_\Td v\left|\partial_1\frac
  uv\right|\left\|\frac u{u^1}-\frac v{u^1}\right\|_{TV(\Rdd)}\\
&\leq K\int_\Td v\left|\partial_1\frac
  uv\right|\left(\int_\Rdd\left(\frac u{u^1}\log\frac uv\right)\right)^{1/2}.\\
\end{align*}
In conclusion, we find
\begin{align*}
E'(t)
&\leq-\int_\Td \frac{v^2}u\left|\nabla\frac
  uv\right|^2+K\left(\int_\Td\frac{v^2}u\left|\partial_1\frac
  uv\right|^2\right)^{1/2}\left(E(t)\right)^{1/2}.
\end{align*}
We can conclude the proof using Young's
inequality and then Gr\"onwall's
Lemma.
\end{rem}

\subsubsection{Uniqueness for the nonlinear process}

\begin{theo}\label{th:unicite_EDS}Pathwise uniqueness and uniqueness in
  law hold for Equation \eqref{eq:EDS} in the compact case under
  Assumptions \ref{hyp:V} and \ref{hyp:cond_init}, and in the non
  compact case under Assumptions \ref{hyp:V}, \ref{hyp:cond_init} and \ref{hyp:cond_init_nc}.
\end{theo}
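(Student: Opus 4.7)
The strategy is to use the uniqueness result for the Fokker--Planck equation (Theorem~\ref{th:unicite_EDP}) to reduce the nonlinear SDE \eqref{eq:EDS} to a linear SDE with a bounded, measurable, deterministic drift, for which classical strong/weak uniqueness results apply. Let me describe the main steps.

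First, I would let $X$ and $\tilde X$ be two weak solutions to \eqref{eq:EDS} (with initial laws that agree, for uniqueness in law; with identical initial data and the same driving Brownian motion, for pathwise uniqueness), and denote by $p$ and $\tilde p$ their time-marginal densities, which exist by Lemma~\ref{lem:mild}. By Lemma~\ref{lem:regularite_EDP} both $p$ and $\tilde p$ satisfy the regularity required in Definition~\ref{def:sol_EDP}, and by Lemma~\ref{lem:lien_EDS_EDP} both are solutions to \eqref{eq:EDP} in that sense. Theorem~\ref{th:unicite_EDP} then forces $p=\tilde p$, and in particular the ratios coincide:
\begin{equation*}
\frac{p_t^{\partial_1V}(x^1)}{p_t^1(x^1)}=\frac{\tilde p_t^{\partial_1V}(x^1)}{\tilde p_t^1(x^1)}\quad\text{for a.e. }(t,x).
\end{equation*}
Consequently, the drift
\begin{equation*}
b_t(x)=-\nabla V(x)+\frac{p_t^{\partial_1V}(x^1)}{p_t^1(x^1)}e_1
\end{equation*}
is a fixed deterministic function of $(t,x)$, bounded on $[0,T]\times\mathcal D$ thanks to Assumption~\ref{hyp:V} and the lower bound $\inf_\T p^1_0\leq p_t^1$ inherited from Lemma~\ref{lem:chaleur} together with the maximum principle under Assumption~\ref{hyp:cond_init}.

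Thus any solution to \eqref{eq:EDS} satisfies the linear SDE
\begin{equation*}
\dx X_t=b_t(X_t)\dx t+\sqrt 2\dx W_t
\end{equation*}
with $b$ bounded and measurable and with non-degenerate constant diffusion. Pathwise uniqueness for such SDEs is a direct consequence of Veretennikov's theorem on strong existence and pathwise uniqueness for multidimensional SDEs with bounded measurable drift and identity diffusion; uniqueness in law follows either from the Yamada--Watanabe theorem applied to pathwise uniqueness, or more directly by Girsanov's transformation, which expresses the law of any solution as the unique measure on path space whose density with respect to Wiener measure (shifted by the initial condition) is $\exp\bigl(\tfrac1{\sqrt2}\int_0^T b_s(W_s)\cdot \dx W_s -\tfrac14\int_0^T|b_s(W_s)|^2\dx s\bigr)$, the exponential being well-defined because $b$ is bounded.

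The main (and essentially only) non-routine ingredient is the reduction step, which relies crucially on Theorem~\ref{th:unicite_EDP}: without it one could not claim that the two drifts coming from $X$ and $\tilde X$ agree, and one would be stuck with a genuinely nonlinear equation. Once the drift is known to be a fixed bounded deterministic function, the rest is classical SDE theory.
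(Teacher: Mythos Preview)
Your proposal is correct and follows essentially the same route as the paper: use Lemma~\ref{lem:lien_EDS_EDP} and Theorem~\ref{th:unicite_EDP} to show the time-marginal densities of any solution are uniquely determined, then observe that \eqref{eq:EDS} becomes a linear SDE with a fixed bounded measurable drift, for which pathwise uniqueness holds by Veretennikov/Krylov--R\"ockner and uniqueness in law follows from Girsanov. The paper cites \cite{krylov-rockner-05} where you invoke Veretennikov, but since the drift is bounded this is immaterial.
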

\begin{proof}
As stated in Lemma \ref{lem:lien_EDS_EDP}, if $X$
solves \eqref{eq:EDS}, then $\{X_t\}$ admits
a density $p_t$ such that~$p$ satisfies \eqref{eq:EDP} in the sense of
Definition~\ref{def:sol_EDP}.
Thus, in regard of
Theorem~\ref{th:unicite_EDP}, $p_t$ is uniquely defined. Consequently, Equation~\eqref{eq:EDS} rewrites
\begin{equation}\label{eq:EDS2}\dx X_t=\left(-\nabla
    V(X_t)+\frac{p_t^{\partial_1V}(X_t^1)}{p_t^1(X_t^1)}e_1\right)\dx
  t+\sqrt2\dx W_t,\end{equation}
where $p_t$ is the unique solution to Equation \eqref{eq:EDP} in the
sense of Definition \ref{def:sol_EDP}.
Notice that the drift  
$$b_t(x)=-\nabla V(x)+\frac{p_t^{\partial_1V}(x^1)}{p_t^1(x^1)}e_1$$
in Equation \eqref{eq:EDS2} is bounded, so that pathwise uniqueness
holds (see \cite{krylov-rockner-05}), as well as uniqueness in law, from the Girsanov Theorem. 
\end{proof}

\section{A regularized approximate dynamics}\label{dynamique_approchee}

To estimate the difference between the nonlinear process defined
by Equation \eqref{eq:EDS} and its particle approximation
\eqref{eq:EDS_N_eps}, we introduce an intermediate process, called the
regularized nonlinear process, which is the natural expected limit as $N$ goes to infinity of
the particle approximation \eqref{eq:EDS_N_eps}. The nonlinear term in
this equation is more regular than the one in \eqref{eq:EDS}, so that we
can show existence and uniqueness for this process.

The aim of this section will be, in a first time, to prove existence and
uniqueness for the regularized nonlinear process, see Theorem~\ref{th:ex_un_pnl}, and
in a second time to show that the regularized nonlinear process converges to the
nonlinear process solution to \eqref{eq:EDS} as $\varepsilon$ and $\alpha$ go to zero, and to
estimate the rate of this convergence, see Theorem \ref{th:vitesse} below.
This will yield an existence result for the nonlinear process.

Under Assumption \ref{hyp:cond_init_part} on the initial condition, for a
fixed positive integer $n$, we expect the sequence of processes $(X_{n,N}^\eta)_{N>0}$
 defined by~\eqref{eq:EDS_N_eps} to converge to a solution to
\begin{align}\label{eq:EDS_eps}
\begin{cases}\dx\bar X_{t,n}^\eta&\displaystyle=\left(-\nabla
V(\bar X_{t,n}^\eta)+\frac{\phieps*P_t^{\eta,\partial_1V}}{\phieps*P_t^{\eta,1}}(\bar X_{t,n}^{\eta,1})e_1\right)\dx t+\sqrt2\dx W_t^n,\\
P_t^\eta&=\text{distribution of }\{\bar X_{t,n}^\eta\}
\end{cases}
\end{align}
with initial condition $(X_{0,n})$.

\subsection{Existence and uniqueness for the regularized problem}
In this section, we show that pathwise uniqueness, uniqueness in
distribution and strong existence hold for the regularized
dynamics. 

We first show existence and uniqueness of a solution to
\eqref{eq:EDS_eps}, using a fixed point method.

\begin{theo}\label{th:ex_un_pnl}
Consider both the compact and the non compact cases. Under Assumptions
\ref{hyp:V} and~\ref{hyp:cond_init_part}, strong
existence and uniqueness hold for Equation \eqref{eq:EDS_eps}.
\end{theo}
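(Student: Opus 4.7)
The approach is a classical Sznitman-type fixed point argument. Since the $N$ equations in \eqref{eq:EDS_eps} share a common marginal law $P_t^\eta$ and are driven by independent Brownian motions $W^n$ and i.i.d.\ initial data $X_{0,n}$, the system decouples in law and it suffices to prove strong existence and pathwise uniqueness for a single copy, call it $\bar X^\eta$. The first step is to freeze the nonlinearity: to a continuous flow $\mu=(\mu_t)_{t\in[0,T]}$ of probability measures on $\mathcal D$ associate the drift
\begin{equation*}
b^\mu_t(x) = -\nabla V(x) + \frac{\phieps * \mu_t^{\partial_1 V}}{\phieps * \mu_t^1}(x^1)\,e_1.
\end{equation*}
The crucial structural fact is that $\phieps\geq\alpha>0$, so $\phieps*\mu_t^1\geq\alpha$ and the ratio is pointwise bounded by $\|\partial_1V\|_\infty$. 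Hence $b^\mu$ is bounded uniformly in $(\mu,t,x)$, and using the smoothness of $\phieps$ together with Assumption~\ref{hyp:V}, it is Lipschitz in $x$ with a constant $L$ depending only on $\alpha$, $\varepsilon$ and $V$. The linear SDE $dY_t=b^\mu_t(Y_t)\,dt+\sqrt{2}\,dW_t$ with $Y_0=X_0$ therefore admits a unique strong solution $Y^\mu$, and I set $\Phi(\mu)_t=\mathrm{Law}(\{Y^\mu_t\})$.

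The next step is to establish Lipschitz dependence of the drift on $\mu$ in total variation. The algebraic identity $\frac{a}{b}-\frac{a'}{b'}=\frac{(a-a')b'-a'(b-b')}{b\,b'}$ with $a=\phieps*\mu^{\partial_1V}$, $b=\phieps*\mu^1$, together with $b,b'\geq\alpha$ and $|a'|\leq\|\partial_1V\|_\infty\,b'$, reduces matters to the two linear functionals $\phieps*(\mu-\mu')^1(x^1)$ and $\phieps*(\mu-\mu')^{\partial_1V}(x^1)$. Both are integrals of uniformly bounded functions against the signed measure $\mu-\mu'$, yielding
\begin{equation*}
\|b^\mu_t - b^{\mu'}_t\|_{\LL^\infty(\mathcal D)} \leq C(\alpha,\varepsilon,V)\,\|\mu_t-\mu'_t\|_{TV}.
\end{equation*}

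Then I would run the contraction. For two flows $\mu$ and $\mu'$, Girsanov's theorem, applicable because the drift shift $b^\mu-b^{\mu'}$ is uniformly bounded, combined with a Pinsker--Csisz\'ar--Kullback inequality, gives
\begin{equation*}
\|\Phi(\mu)_t-\Phi(\mu')_t\|_{TV}^{2} \leq K\int_0^t\|b^\mu_s-b^{\mu'}_s\|_\infty^{2}\,ds \leq K\,t\,C^{2}\sup_{s\leq t}\|\mu_s-\mu'_s\|_{TV}^{2}.
\end{equation*}
For a sufficiently small horizon $T_0$ this makes $\Phi$ a strict contraction on $\mathcal C([0,T_0],\mathcal P(\mathcal D))$ equipped with the sup-in-time total variation metric, yielding a unique fixed-point flow $\mu^*$ on $[0,T_0]$; iterating on successive intervals of the same length covers $[0,T]$ for arbitrary $T$. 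The fixed point $\mu^*$, together with the associated linear SDE with Lipschitz bounded drift $b^{\mu^*}$, delivers a strong and pathwise unique solution to~\eqref{eq:EDS_eps}, and weak uniqueness for that linear SDE yields uniqueness in law. The main technical obstacle is the Lipschitz-in-$\mu$ estimate, which relies crucially on the lower bound $\phieps\geq\alpha$; this is precisely the regularization feature that decouples the present analysis from the integrability issues surrounding the singular denominator of the original nonlinear equation~\eqref{eq:EDS}.
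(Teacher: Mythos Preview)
Your argument is correct, but it takes a genuinely different route from the paper's. The paper also runs a Sznitman-type fixed point, but sets it up on path space $\mathcal P(\mathcal C_T)$ with the bounded-Wasserstein metric $D_t(\mu_1,\mu_2)=\inf_{\pi}\int (1\wedge\|x-y\|_{\mathcal C_t})\,d\pi$. Its contraction lemma is proved by a synchronous coupling: one drives two copies $X_{t,1},X_{t,2}$ with the \emph{same} Brownian motion, uses that $(x,\mu)\mapsto \frac{\phieps*\mu^{\partial_1V}}{\phieps*\mu^{1}}(x)$ is Lipschitz jointly in $(x,\mu)$ (the $\mu$-dependence measured in $D_s$, exploiting that $\phieps$ and $\partial_1V$ have bounded derivatives), and gets $D_t(\Phi(\mu_1),\Phi(\mu_2))\leq K\int_0^t D_s(\mu_1,\mu_2)\,ds$ by Gr\"onwall, then iterates Picard. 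You instead work on flows of time marginals with the sup-in-time total variation distance, bound the drift difference in $\|\cdot\|_\infty$ by $C\|\mu_t-\mu'_t\|_{TV}$, and convert this into a law estimate via Girsanov and Pinsker. Both proofs hinge on the same structural point, namely $\phieps\geq\alpha>0$, which makes the denominator harmless. The paper's coupling route yields the fixed point directly at the level of path laws (convenient later for propagation of chaos) and does not use the additive-noise structure; your Girsanov route is slightly more indirect in that it first pins down the marginal flow and then reads the path law off the resulting linear SDE, and it does rely on the constant diffusion coefficient, but it avoids constructing explicit couplings. One small point you glossed over is the ambient space for the contraction: you should specify that $\mathcal C([0,T_0],\mathcal P(\mathcal D))$ means narrowly continuous flows, check that $\Phi$ maps into this set (immediate since $Y^\mu$ has continuous paths), and note that this set is complete under the sup-TV metric.
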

Here we follow \cite{sznitman-89}: we show that a measure on the space of
continuous paths from $[0,T]$ to $\R^d$ is the law of a
solution to \eqref{eq:EDS_eps} if and only if it is a fixed point of
some function $\Phi_T$. Then we show existence and uniqueness of this fixed
point by a contraction argument. This cannot be done directly for
Equation \eqref{eq:EDS}, since its nonlinear term is too ill-behaved, so
that we do not have contraction in that case.

For a probability measure $\mu$ on the set $\mathcal C_T=\mathcal
C([0,T],\R^d)$ we denote by $\Phi_T(\mu)$ the distribution on $\C$ of the process $X$ defined by
\begin{equation}\label{eq:def_Phi}\dx X_t=\left(-\nabla
V(X_t)+\frac{\displaystyle\int_\C\phieps(X^1_t-x^1_t)\partial_1V(x_t)\dx
 \mu(x)}{\displaystyle\int_\C \phieps(X^1_t-x^1_t)\dx
 \mu(x)}e_1\right)\dx t+\sqrt2\dx W_t\end{equation}
whose initial condition $X_0$ has law $P_0$ and is independent of $W$. The distribution $\Phi_T(\mu)$ is well defined since Equation
\eqref{eq:def_Phi}, having global Lipschitz
coefficients, has a unique strong solution.

Notice that, since 
$$\displaystyle\frac{\phieps*\mu_t^{\partial_1V}}{\phieps*\mu_t^1}=\frac{\int_\C\phieps(\cdot-x^1_t)\partial_1V(x_t)\dx
  \mu(x)}{\int_\C \phieps(\cdot-x^1_t)\dx \mu(x)},$$
 $\mu$ is the distribution of a solution to
\eqref{eq:EDS_eps} up to time $T$ if, and
only if $\Phi_T(\mu)=\mu$. We will show that such a $\mu$ exists and is
unique using Picard's Theorem.

The Wasserstein metric $D_T(\mu_1,\mu_2)$ between two probability distributions $\mu_1$
and $\mu_2$ on~$\mathcal C_T$ is defined by $$D_T(\mu_1,\mu_2)=\inf_{\pi\in\Pi}\int_{\C\times\C} 1\wedge\|x-y\|_{\mathcal C_T}\dx
  \pi(x,y),$$ 
where $\Pi=\left\{\pi\in\mathcal P(\mathcal C_T\times\mathcal
C_T),~\pi\textit{ having }\mu_1\textit{ and }\mu_2\textit{ as marginal
  distributions}\right\}$ is the set of all coupling of $\mu_1$ and $\mu_2$,
and $\|.\|_{\mathcal C_T}$ is the uniform norm on $\mathcal C_T$:
$$\|f-g\|_{\mathcal C_T}=\sup_{t\in[0,T]}|f(t)-g(t)|.$$
More generally, for $t\in[0,T]$, we set $$D_t(\mu_1,\mu_2)=\inf_{\pi\in\Pi}\int_{\C\times\C} 1\wedge\sup_{s\in[0,t]}|x_s-y_s|\dx
  \pi(x,y).$$

Endowed with the Wasserstein metric, the space
$\mathcal P(\mathcal C_T)$ of probability measures on $\C$ is complete.
In order to apply a fixed point argument, we will need the following
contraction lemma.

\begin{lem}\label{lem:contraction}Consider both the compact and non
  compact case. Let $T$ be a positive time. Under Assumption
  \ref{hyp:V}, there is a positive constant $K$, not depending on $t$, satisfying
$$D_t(\Phi_T(\mu_1),\Phi_T(\mu_2))\leq K\int_0^tD_s(\mu_1,\mu_2)\dx s,$$ for all $t$
in $[0,T]$ and for all probability measures $\mu_1$ and $\mu_2$ in
$\mathcal P(\mathcal C_T)$.
\end{lem}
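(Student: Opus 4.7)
The plan is a standard synchronous-coupling/Gr\"onwall argument: I will show that the drift of the regularized SDE is Lipschitz in both the spatial variable and the measure variable, then compare two solutions driven by a common Brownian motion and common initial condition, and finally close the loop with Gr\"onwall's lemma.

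Let me denote, for a measure $\mu\in\mathcal P(\C)$,
\[
b_\mu(t,x)=-\nabla V(x)+\frac{\int_\C\phieps(x^1-y^1_t)\partial_1V(y_t)\,\dx\mu(y)}{\int_\C\phieps(x^1-y^1_t)\,\dx\mu(y)}\,e_1.
\]
First I would check two regularity properties of $b_\mu$. On the one hand, since $\phieps$ and $\phieps'=\psi_\varepsilon'$ are bounded, $\partial_1V$ is bounded (Assumption~\ref{hyp:V}), and the denominator is pointwise bounded below by $\alpha>0$ because $\phieps=\alpha+\psi_\varepsilon\geq\alpha$, a direct computation shows that $x\mapsto b_\mu(t,x)$ is Lipschitz uniformly in $\mu$ and $t$. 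On the other hand, for each fixed $(t,x)$ the two maps $y\mapsto\phieps(x^1-y^1_t)\partial_1V(y_t)$ and $y\mapsto\phieps(x^1-y^1_t)$ are bounded Lipschitz functionals on $\C$ depending only on $y_t$, with bounds and Lipschitz constants independent of $x$. Using the elementary bound $|f(y)-f(y')|\leq\max(2\|f\|_\infty,\mathrm{Lip}(f))\cdot(1\wedge\sup_{s\leq t}|y_s-y'_s|)$ and taking an infimum over couplings, I obtain
\[
\left|\int F\,\dx\mu_1-\int F\,\dx\mu_2\right|\leq K\,D_t(\mu_1,\mu_2)
\]
for $F$ either of the two functionals above. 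The lower bound $\alpha$ on the denominators then propagates to the quotient, yielding
\[
|b_{\mu_1}(t,x)-b_{\mu_2}(t,x)|\leq K\,D_t(\mu_1,\mu_2),
\]
uniformly in $(t,x)\in[0,T]\times\R^d$, with $K$ depending on $\alpha,\varepsilon,\|\nabla V\|_\infty,\|\nabla^2 V\|_\infty$ but not on $t$ or the measures.

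Next, I would build the synchronous coupling: let $X^i$, $i=1,2$, be the unique strong solutions of $\dx X^i_t=b_{\mu_i}(t,X^i_t)\dx t+\sqrt2\dx W_t$, $X^i_0=X_0$, with the same Brownian motion $W$ and the same initial condition $X_0\sim P_0$ (possible since the drifts are globally Lipschitz in space). Subtracting the two equations, the Brownian increments cancel and the triangle inequality together with the two Lipschitz properties above gives, for $s\leq t\leq T$,
\[
|X^1_s-X^2_s|\leq K\int_0^s|X^1_r-X^2_r|\,\dx r+K\int_0^s D_r(\mu_1,\mu_2)\,\dx r.
\]
Taking the supremum over $s\in[0,t]$ and applying Gr\"onwall's lemma to the nondecreasing function $r\mapsto\sup_{u\leq r}|X^1_u-X^2_u|$ yields a \emph{pathwise deterministic} estimate
\[
\sup_{s\leq t}|X^1_s-X^2_s|\leq K e^{KT}\int_0^t D_r(\mu_1,\mu_2)\,\dx r.
\]
Since this bound is deterministic and nonnegative, taking $1\wedge(\cdot)$ then expectation preserves it, and the law of $(X^1,X^2)$ is an admissible coupling of $\Phi_T(\mu_1)$ and $\Phi_T(\mu_2)$. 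Hence
\[
D_t(\Phi_T(\mu_1),\Phi_T(\mu_2))\leq\E\left[1\wedge\sup_{s\leq t}|X^1_s-X^2_s|\right]\leq K e^{KT}\int_0^t D_r(\mu_1,\mu_2)\,\dx r,
\]
which is the desired contraction with constant $Ke^{KT}$ independent of $t\in[0,T]$.

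The only real difficulty is the Lipschitz-in-measure estimate for the nonlinear drift, and this is only manageable thanks to the regularization: the lower bound $\alpha$ on $\phieps$ prevents the denominator from vanishing and the mollification provides a bounded derivative $\psi_\varepsilon'$, both of which fail for the original equation~\eqref{eq:EDS}. Everything else is a routine Gr\"onwall argument.
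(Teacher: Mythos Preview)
Your proof is correct and follows essentially the same synchronous-coupling/Gr\"onwall strategy as the paper: same coupling via a common Brownian motion and initial condition, same Lipschitz estimate on the regularized drift in both the spatial and measure variables (relying on the lower bound $\phieps\geq\alpha$ and the boundedness of $\psi_\varepsilon'$ and $\nabla V,\nabla^2 V$), and the same conclusion via Gr\"onwall. The only minor difference is that the paper applies Gr\"onwall directly to $t\mapsto\E[1\wedge\|X^1-X^2\|_{\mathcal C_t}]$ after using the truncated Lipschitz bound $K(|x-y|\wedge1+D_s(\mu_1,\mu_2))$, whereas you apply Gr\"onwall pathwise to the untruncated difference and only truncate at the very end; your route yields the slightly stronger deterministic bound $\sup_{s\leq t}|X^1_s-X^2_s|\leq Ke^{KT}\int_0^t D_r(\mu_1,\mu_2)\,\dx r$, but the two arguments are otherwise identical.
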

\begin{proof}Let $\mu_1$ and $\mu_2$ be two probability measures on $\C$.
For $i=1,2$, define $X_{t,i}$ by
$$\dx X_{t,i}=\left(-\nabla
V(X_{t,i})+\frac{\displaystyle\int_\C\phieps(X^1_{t,i}-x^1_t)\partial_1V(x_t)\dx
 \mu_i(x)}{\displaystyle\int_\C \phieps(X^1_{t,i}-x^1_t)\dx
 \mu_i(x)}e_1\right)\dx t+\sqrt2\dx W_t$$
with given initial condition $X_{0,i}=X_0$, for $i=1,2$.

Notice that 
\begin{equation}\label{eq:fraction}
\displaystyle \frac{\int_\C\phieps(\cdot-x^1_t)\partial_1V(x_t)\dx
 \mu_i(x)}{\int_\C \phieps(\cdot-x^1_t)\dx
 \mu_i(x)}=\frac{\phieps*\mu_{i,t}^{\partial_1V}}{\phieps*\mu_{i,t}^1},
\end{equation}
 and that from \eqref{eq:tete_phi} and Assumption \ref{hyp:V}, the numerator
and the denominator of \eqref{eq:fraction} are respectively bounded from
above and from below by positive constants depending only on $\eta$ and $V$.
Then, for any $x,y$ and $0\leq s\leq T$,
$$\left|\frac{\phieps*\mu_{1,s}^{\partial_1V}}{\phieps*\mu_{1,s}^1}(x)
-\frac{\phieps*\mu_{2,s}^{\partial_1V}}{\phieps*\mu_{2,s}^1}(y)\right|\leq K\left(|x-y|\wedge1+D_s(\mu_1,\mu_2)\right).$$
Consequently,
$$\E\left[1\wedge \|X_1-X_2\|_{\mathcal C_t}\right]\leq K\left(\int_0^t\E\left[1\wedge \|X_1-X_2\|_{\mathcal C_s}\right]\dx
s+\int_0^tD_s(\mu_1,\mu_2)\dx s\right),$$for all $t\leq T$.
Using Gr\"onwall's Lemma, we then find, for any $t\leq T$,

$$\E\left[1\wedge\|X_1-X_2\|_{\mathcal C_t}\right]\leq K\int_0^tD_s(\mu_1,\mu_2)\dx s.$$
 But $$D_t(\Phi_t(\mu_1),\Phi_t(\mu_2))\leq\E\left[1\wedge\|X_1-X_2\|_{\mathcal C_t}\right]$$ since $X_1$
and $X_2$ respectively have $\Phi_t(\mu_1)$ and $\Phi_t(\mu_2)$ as
distributions, finishing the proof.
\end{proof}

\begin{proof}[Proof of Theorem~\ref{th:ex_un_pnl}]
Iterating Lemma~\ref{lem:contraction}, we find existence and uniqueness of a fixed point of
$\Phi_T$, given~$X_0$, which yields uniqueness of the distribution $P$ of the solution to
\eqref{eq:EDS_eps} on $[0,T]$. 

The law $P$ of any solution being unique, we can substitute the marginal
of $P$ at time $t$ in Equation~\eqref{eq:EDS_eps}, and we obtain a
linear stochastic differential equation
with Lipschitz continuous coefficients. Pathwise uniqueness holds for
that kind of equation, so that weak existence and pathwise  uniqueness hold for
\eqref{eq:EDS_eps}. Consequently, from Yamada-Watanabe Theorem, it admits a unique strong solution.

\end{proof}

\subsection{Convergence to the nonlinear process}
We are now going to let $\varepsilon$ and $\alpha$ go to $0$ in
\eqref{eq:EDS_eps}. 

We denote by $X_t^\eta$ the unique strong solution to \eqref{eq:EDS_eps}, with initial
condition $X_0$ and Brownian motion $W^n$ replaced with $W.$ The
distribution of $\{X_t^\eta\}$ will be denoted $P^\eta$. We expect a possible
limit $X$ of $X^\eta$ as $\eta$ goes to $0$ to be
a solution to \eqref{eq:EDS}. To this aim, we define 
the following martingale problem:

\begin{defi}\label{def:prob_mart}
We say that a probability measure $P$ on the space $\C$ of continuous
paths is a solution to the martingale problem associated to
\eqref{eq:EDS} if its time marginals $P_t$ admit a density $p_t$ with respect
to the Lebesgue measure, and if,
under the measure $P$,
\begin{itemize}
\item the canonical process $x\in\C$ is such that for
any twice differentiable function which is bounded as well as its first
and second derivatives, the process
\begin{equation}\label{eq:prob_mart}
m_t=\psi(x_t)-\psi(x_0)+\int_0^t\nabla\psi(x_s)\nabla V(x_s)\dx
s-\int_0^t\Delta\psi(x_s)\dx
s-\int_0^t\partial_1\psi(x_s)\frac{p_t^{\partial_1V}(x_t)}{p_t^1(x_t)}\dx s,\end{equation}
is a martingale with respect to the filtration $\sigma(x_s,s\leq t)$.
\item $\{x_0\}$ has law $P_0$.
\end{itemize}\end{defi}
Notice that, since the drift coefficient is bounded, the Girsanov
theorem shows that it is not restrictive to assume that $P_t$ has a density.

We deduce from Theorem \ref{th:unicite_EDS} the following result:
\begin{prop}\label{prop:unicite_prob_mart}
In the compact case under Assumptions \ref{hyp:V} and \ref{hyp:cond_init},
or in the non compact case under Asumptions \ref{hyp:V},
\ref{hyp:cond_init} and \ref{hyp:cond_init_nc}, uniqueness holds for the
martingale problem defined in Definition~\ref{def:prob_mart}.
\end{prop}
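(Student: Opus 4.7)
The plan is to reduce the nonlinear martingale problem to a linear one by first pinning down the density $p_t$ using the results of Section~\ref{regularite_unicite}, and then invoking classical uniqueness for linear martingale problems with bounded drift.

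Let $P$ be any solution to the martingale problem of Definition~\ref{def:prob_mart}, and let $p_t$ denote the density of its time marginal $P_t$. The first step is to check that $P$ is the law of a weak solution to the SDE~\eqref{eq:EDS}. Applying the martingale property to $\psi(x)=x^i$ and $\psi(x)=x^ix^j$ (with a standard truncation argument in the non-compact case, justified by boundedness of the drift under Assumption~\ref{hyp:V} together with Lemma~\ref{lem:chaleur} and the maximum principle giving $p_s^1\geq \inf_\T p_0^1>0$), one finds that $x_t-x_0-\int_0^t b_s(x_s)\,ds$ is a continuous martingale with bracket $2tI_d$, where
$$b_s(x)=-\nabla V(x)+\frac{p_s^{\partial_1V}(x^1)}{p_s^1(x^1)}e_1.$$
By L\'evy's characterization (after possibly enlarging the probability space), this martingale equals $\sqrt{2}\,W_t$ for some Brownian motion $W$, so the canonical process is a weak solution to~\eqref{eq:EDS}.

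Once $P$ is identified as the law of a weak solution, Lemma~\ref{lem:lien_EDS_EDP} says that $p$ is a solution of the Fokker-Planck equation~\eqref{eq:EDP} in the sense of Definition~\ref{def:sol_EDP}. Theorem~\ref{th:unicite_EDP} then forces $p_t$ to be uniquely determined by $P_0$. In particular, the drift coefficient $b_s$ above is the same bounded measurable function for every solution of the martingale problem. The problem therefore collapses to the linear martingale problem associated with $dX_t=b_t(X_t)\,dt+\sqrt{2}\,dW_t$ for this fixed bounded drift $b$, for which uniqueness in law is classical (Girsanov's theorem relative to Wiener measure, exactly as in the conclusion of the proof of Theorem~\ref{th:unicite_EDS}).

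The main obstacle is the first step: one must legitimately pass from the abstract martingale formulation to the SDE formulation so that the results of Section~\ref{regularite_unicite} apply. Once the regularity and uniqueness of $p_t$ have been transferred to $P$, the linearization and the invocation of Girsanov are routine.
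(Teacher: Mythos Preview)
Your argument is correct and is exactly the route the paper has in mind: the paper gives no separate proof but simply says the proposition is deduced from Theorem~\ref{th:unicite_EDS}, and what you wrote is the natural unpacking of that deduction (martingale problem $\Rightarrow$ weak solution of~\eqref{eq:EDS} $\Rightarrow$ Lemma~\ref{lem:lien_EDS_EDP} and Theorem~\ref{th:unicite_EDP} pin down $p_t$ $\Rightarrow$ linear martingale problem with bounded drift $\Rightarrow$ uniqueness in law via Girsanov). Your explicit mention of the truncation needed to apply the martingale property to the coordinate functions is a welcome precision that the paper leaves implicit.
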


Our aim in this section will be to prove the following results:
\begin{theo}\label{th:convergence_eta}
Let Assumptions \ref{hyp:V} and \ref{hyp:cond_init} hold.

In the compact case, $(P^\eta)_{\eta>0}$ converges as $\eta$ goes to
$0$ to the solution of the martingale problem.

In the non compact case, the family of probability
measures $(P^\eta)_{\eta>0}$ is tight, and any converging subsequence
converges to a solution of the martingale problem defined in Definition
\ref{def:prob_mart}. Under the additional Assumption
\ref{hyp:cond_init_nc}, $(P_\eta)_{\eta>0}$ actually converges to the
unique solution.
\end{theo}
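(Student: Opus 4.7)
The overall plan follows the standard scheme for nonlinear SDE limits: establish tightness of $(P^\eta)$, identify any subsequential weak limit as a solution of the martingale problem of Definition~\ref{def:prob_mart}, and conclude via the uniqueness of Proposition~\ref{prop:unicite_prob_mart}. The drift of \eqref{eq:EDS_eps} is bounded uniformly in $\eta$: by Assumption~\ref{hyp:V}, $|\nabla V|\le K$ and $|\partial_1 V|\le K$, and since $|\phieps * P_t^{\eta,\partial_1V}(x^1)| \le K\,(\phieps * P_t^{\eta,1})(x^1)$ pointwise, the nonlinear ratio is itself bounded by $K$. Kolmogorov's criterion applied to the resulting bound $\E[|X_t^\eta - X_s^\eta|^4] \le K|t-s|^2$, together with the fixed initial law $P_0$, yields tightness of $(P^\eta)$ on $\mathcal C_T$ in both the compact and non-compact settings.

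Extract $P^{\eta_k} \to P$ weakly along some subsequence. The marginals $P_t^{\eta_k}$ admit densities $p_t^{\eta_k}$ via a mild representation analogous to Lemma~\ref{lem:mild}. The proofs of Lemmas~\ref{lem:p_L2} and~\ref{lem:regularite_EDP} adapt with constants independent of $\eta$, since they only used boundedness of the drift coefficient (available uniformly in $\eta$ here); this gives uniform-in-$\eta$ bounds on $p^{\eta_k}$ in $\LL^\infty((0,T),\LL^2(\mathcal D))\cap \LL^2((0,T),\HH^1(\mathcal D))$, with the weight $w$ in the non-compact case. A uniform positive lower bound on $p^{\eta_k,1}$ is obtained from the linear parabolic equation with bounded drift that the first marginal satisfies, by a maximum-principle argument starting from $\inf p_0^1>0$. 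By Aubin-Lions compactness, a further subsequence converges strongly in $\LL^2$ on any time-space compact of $(0,T)\times\mathcal D$ to some $p$, which is necessarily the density of $P_t$.

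For any bounded twice-differentiable test function $\psi$ with bounded first two derivatives, It\^o's formula applied to \eqref{eq:EDS_eps} shows that $m_t^{\eta_k}$, defined by \eqref{eq:prob_mart} but with $\phieps * p_t^{\eta_k,\partial_1V}/\phieps * p_t^{\eta_k,1}$ replacing $p_t^{\partial_1V}/p_t^1$, is a $P^{\eta_k}$-martingale. The first three integrals give a continuous bounded functional of the path, so their joint law under $P^{\eta_k}$ converges to the corresponding law under $P$. For the nonlinear integrand, the strong $\LL^2$-convergence of $p^{\eta_k}$ together with the uniform positive lower bound on $p^{\eta_k,1}$ yields $\phieps * p^{\eta_k,\partial_1V}/\phieps * p^{\eta_k,1} \to p^{\partial_1V}/p^1$ strongly enough to pass to the limit against the bounded factor $\partial_1\psi(x_s^1)$ and bounded continuous functionals of the path. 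It follows that $P$ solves the martingale problem of Definition~\ref{def:prob_mart}.

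Finally, in the compact case under Assumption~\ref{hyp:cond_init}, and in the non-compact case under the additional Assumption~\ref{hyp:cond_init_nc}, Proposition~\ref{prop:unicite_prob_mart} gives uniqueness, which upgrades the subsequential convergence to convergence of the full family $(P^\eta)$; without \ref{hyp:cond_init_nc} in the non-compact case, only the subsequential statement is claimed. The hardest step is the identification of the limit: because the ratio in the drift is nonlocal and becomes singular as $\eta\to 0$, weak convergence of $P^\eta$ alone is insufficient, and one genuinely needs the strong PDE compactness of the densities $p^\eta$ combined with the uniform positive lower bound on $p^{\eta,1}$.
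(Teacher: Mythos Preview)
Your strategy---tightness via Kolmogorov, Aubin--Lions compactness for the densities, identification of the limit via the martingale problem, then uniqueness---is exactly the paper's. Two points need correction or sharpening.

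The claimed uniform positive lower bound on $p^{\eta,1}$ via a maximum principle is both unjustified as stated and unnecessary. Unlike the limit dynamics, where the first-coordinate drift cancels and $p^1$ solves the heat equation (Lemma~\ref{lem:chaleur}), for the regularized dynamics \eqref{eq:EDS_eps} the first marginal does \emph{not} satisfy a closed linear equation: integrating the Fokker--Planck equation in $x\z$ leaves the term $\partial_1 p^{\eta,\partial_1V}$, which involves the full density $p^\eta$, not just $p^{\eta,1}$. So a direct maximum-principle argument on a ``linear parabolic equation with bounded drift'' is not available. Fortunately no such bound is needed: since $|\partial_1 V|\le K$, one has $|\phieps*p^{\eta,\partial_1V}|\le K\,\phieps*p^{\eta,1}$ pointwise, hence the ratio is bounded by $K$ regardless of any lower bound on the denominator. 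The paper uses only this uniform bound on the ratio, together with its a.e.\ convergence.

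Your identification step glosses over the real difficulty: both the integrand (which involves the nonlinear drift evaluated along the path) and the measure $P^\eta$ depend on $\eta$, and the limiting functional $x\mapsto m_t$ is not continuous in $x$, so weak convergence of $P^\eta$ cannot be applied directly. The paper's device is a two-parameter decomposition: for an auxiliary $\tilde\eta$, the functional $x\mapsto g(x_{s_1},\ldots,x_{s_n})(m_t^{\tilde\eta}-m_s^{\tilde\eta})$ is bounded and continuous in $x$ (this is where the smoothing by $\varphi_{\tilde\eta}$ is essential), so one first lets $\eta\to 0$ using weak convergence $P^\eta\to P^0$; then one lets $\tilde\eta\to 0$ under the fixed measure $P^0$ by dominated convergence, using the a.e.\ convergence of the ratio (Lemma~\ref{lem:expr_v_utild}) and the uniform bound $K$ on all the ratios. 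Along the way the paper upgrades local $\LL^2$ convergence of $p^\eta$ to global $\LL^1((0,T)\times\mathcal D)$ convergence via the Scheff\'e-type identity $\int|p^\eta-p^0|=2\int(p^\eta-p^0)^-$ and dominated convergence; this is what yields convergence of $p^{\eta,\partial_1V}$ and $p^{\eta,1}$ in $\LL^1((0,T)\times\T)$, hence (after a further subsequence) a.e.\ convergence of the ratio. You should make this decomposition explicit rather than asserting the limit passes ``strongly enough''.
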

As a corollary of Theorem \ref{th:convergence_eta}, one has existence of
solutions to \eqref{eq:EDS} (under regularity assumptions on the
initial condition).

From Proposition \ref{prop:unicite_prob_mart}, in order to prove Theorem \ref{th:convergence_eta}, it is enough to
prove that the family $(P^\eta)_{\eta>0}$ is tight, and that any
converging subsequence converges to a solution of the martingale problem.

Our first step will be to derive the Fokker-Planck equation satisfied by
the distribution of $\{X_t^\eta\}.$ Let~$\psi$ be a smooth bounded function on $\mathcal D$, with bounded derivatives. Applying It\=o's formula to
$\psi(X_t^\eta)$ and taking the expectation, we find that

\begin{equation}
\int_\mathcal D\psi \dx P_T^\eta=\int_\mathcal D\psi
p_0(x)\dx x+\int_0^T\int_\mathcal D\left(\Delta\psi-\nabla\psi\cdot\nabla
 V\right)\dx P_t^\eta\dx
t\label{eq:Ito_FP}+\int_0^T\int_\mathcal D\partial_1\psi\frac{\phieps*P_t^{\eta,\partial_1V}}{\phieps*P_t^{\eta,1}}\dx
P_t^\eta\dx t.\end{equation}
Equation \eqref{eq:Ito_FP} is a weak formulation of the
following partial differential equation 

\begin{equation}\label{eq:EDP_eps}
  \partial_t P_t^\eta=\text{div}\left(P_t^\eta\nabla V+\nabla
    P_t^\eta\right)-\partial_1\left(P_t^\eta\frac{\phieps*P_t^{\eta,\partial_1V}}{\phieps*P_t^{\eta,1}}\right).\end{equation}
We are going to show that $P_t^\eta$, or more precisely, its density, is actually a solution to equation
\eqref{eq:EDP_eps} in the following stronger sense.

\begin{defi}\label{def:sol_EDP_eps}
A function $u$ is said to be a solution to \eqref{eq:EDP_eps} with
initial condition $p_0$ if, in the compact case,
\begin{itemize}
\item $u$ belongs to
  $\LL^\infty((0,T),\LL^2(\T^d))\bigcap\LL^2((0,T),\HH^1(\T^d))$ ;
\item for any function $\psi\in\HH^1(\T^d)$, we have:
\begin{equation}\label{eq:EDP_dist_eps}\partial_t\int_\mathcal D u_t\psi=-\int_\mathcal D u_t\nabla
V\cdot\nabla\psi-\int_\mathcal D\nabla
u_t\cdot\nabla\psi+\int_\mathcal D(\partial_1\psi)u_t\frac{\phieps*u_t^{\partial_1V}}{\phieps*u_t^1}
\end{equation}
in the sense of distributions in time ;
\item $u_0=p_0$.
\end{itemize}
In the non compact case these conditions are replaced by
\begin{itemize}
\item $u$ belongs to
  $\LL^\infty((0,T),\LL^2(w))\bigcap\LL^2((0,T),\HH^1(w))$ ;
\item for any function $\psi\in\HH^1(w)$, we have:
\begin{equation}\label{eq:EDP_dist_eps_nc}\partial_t\int_\mathcal D u_t\psi
  w=-\int_\mathcal D u_t\nabla
V\cdot(w\nabla\psi+\psi\nabla w)-\int_\mathcal D\nabla
u_t\cdot(w\nabla\psi+\psi\nabla w)+\int_\mathcal D(\partial_1\psi)u_t\frac{\phieps*u_t^{\partial_1V}}{\phieps*u_t^1}w
\end{equation}
in the sense of distributions in time ;
\item $u_0=p_0$.
\end{itemize}
\end{defi}
As for Definition \ref{def:sol_EDP}, these conditions make sense.

With this definition, one has the following result:
\begin{lem}\label{lem:u_L2}Consider both the compact and the
  non compact cases. Under  Assumptions~\ref{hyp:V} and~\ref{hyp:cond_init}, the distribution $P^\eta_t$ of $\{X^\eta_t\}$ has a density $p_t^\eta$ with
respect to the Lebesgue measure such that $p^\eta$ satisfies~\eqref{eq:EDP_eps} in the
sense of Definition~\ref{def:sol_EDP_eps}.

Moreover, the family $(p^\eta)_{\eta>0}$ is bounded in
$\LL^\infty((0,T),\LL^2(\mathcal D))\cap\LL^2((0,T),\HH^1(\mathcal D))$ and, in the non compact case, under Assumption
\ref{hyp:cond_init_nc}, $(p^\eta)_{\eta>0}$ is bounded in
$\LL^\infty((0,T),\LL^2(w))\cap\LL^2((0,T),\HH^1(w)).$
\end{lem}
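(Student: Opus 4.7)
The plan is to retrace, in order, the arguments of Lemmas~\ref{lem:mild}, \ref{lem:p_L2} and~\ref{lem:regularite_EDP}, but now applied to the regularized dynamics~\eqref{eq:EDS_eps}. The key observation that makes every estimate $\eta$-uniform is that, since $\phieps\geq 0$ and $\partial_1V$ is bounded,
$$\left|\frac{\phieps*P_t^{\eta,\partial_1V}}{\phieps*P_t^{\eta,1}}\right|\leq\|\partial_1V\|_{\LL^\infty(\mathcal D)}$$
pointwise, so the drift in~\eqref{eq:EDS_eps} is bounded by a constant depending only on $V$, not on $\eta$. This is the single crucial input; apart from this, no new idea is needed.

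\textbf{Step 1 (density and mild representation).} Exactly as in the proof of Lemma~\ref{lem:mild}, I pick a smooth compactly supported test function $\chi$, solve the backward heat equation $\partial_s\psi=-\Delta\psi$, $\psi_t=\chi$ by $\psi_s=G_{t-s}\star\chi$, apply It\=o's formula to $\psi_s(X_s^\eta)$, take expectation and use Fubini. This yields that $P_t^\eta$ has a density $p_t^\eta$ satisfying
$$p_t^\eta=G_t\star P_0+\int_0^t\nabla G_{t-s}\star(\nabla Vp_s^\eta)\dx s-\int_0^t\partial_1G_{t-s}\star\left(\frac{\phieps*p_s^{\eta,\partial_1V}}{\phieps*p_s^{\eta,1}}p_s^\eta\right)\dx s.$$

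\textbf{Step 2 (uniform $\LL^2$ bound).} I then reproduce verbatim the bootstrap of Lemma~\ref{lem:p_L2}: the convolution estimate~\eqref{eq:gradG_Lq} on $\nabla G_{t-s}$, combined with the uniform drift bound above, gives a control on $\|p_t^\eta\|_{\LL^q(\mathcal D)}$ (respectively $\|p_t^\eta\|_{\LL^q(w)}$) for $1\leq q<d/(d-1)$ that depends only on $P_0$, $\nabla V$ and $T$. Iterating Young's inequality along the sequence $q_n$ reaches $\LL^2$, with a bound that is uniform in $\eta$. In the non-compact case, the $\LL^1(w)$ control needed at the start of the bootstrap is uniform in $\eta$ by Lemma~\ref{lem:L1w}, since the drift bound it relies on is.

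\textbf{Step 3 (uniform $\HH^1$ bound).} Set $f^\eta=\text{div}(p^\eta\nabla V)-\partial_1\bigl(\frac{\phieps*p^{\eta,\partial_1V}}{\phieps*p^{\eta,1}}p^\eta\bigr)$. Thanks to Step~2 and the uniform bounds on $\nabla V$ and the ratio, $f^\eta$ is bounded in $\LL^2((0,T),\HH^{-1}(\mathcal D))$ (respectively, tested against $\psi w$ as in the proof of Lemma~\ref{lem:regularite_EDP} and using Lemma~\ref{lem:d1V_continu}, in the dual of the weighted space) uniformly in $\eta$. The linear heat problem $\partial_tv-\Delta v=f^\eta$, $v_0=p_0$ admits by the usual Galerkin construction a unique weak solution $v^\eta\in\LL^\infty((0,T),\LL^2(\mathcal D))\cap\LL^2((0,T),\HH^1(\mathcal D))$ (respectively in the weighted spaces) whose norm is controlled by a constant depending only on $P_0$, $\nabla V$ and $T$. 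Writing $v^\eta$ in mild form with Duhamel's formula and comparing to the mild representation of Step~1 gives $v^\eta=p^\eta$, hence the desired regularity and uniform bounds. Finally, the weak identity~\eqref{eq:Ito_FP} extends by density to $\HH^1(\mathcal D)$ (resp.\ $\HH^1(w)$), yielding~\eqref{eq:EDP_dist_eps} (resp.\ \eqref{eq:EDP_dist_eps_nc}).

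\textbf{Main obstacle.} No step is conceptually new; the entire work is to check that all constants can be taken independent of $\eta=(\alpha,\varepsilon)$. The delicate point to monitor is the $\LL^\infty$ bound on the regularized drift and the uniformity of the $\LL^1(w)$ moment in the non-compact setting; both hinge on the pointwise bound on the Nadaraya-Watson ratio, which crucially does not deteriorate as $\alpha,\varepsilon\to 0$.
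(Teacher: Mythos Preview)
Your proposal is correct and follows essentially the same approach as the paper: the paper's proof explicitly says ``following the proof of Lemmas~\ref{lem:p_L2} and~\ref{lem:regularite_EDP}'' to obtain the density and the regularity, which is exactly what you spell out in Steps~1--3, and the key input in both is the $\eta$-uniform bound $|\phieps*P_t^{\eta,\partial_1V}/\phieps*P_t^{\eta,1}|\leq\|\partial_1V\|_{\LL^\infty}$. The only minor difference is that for the uniform-in-$\eta$ bounds the paper gives a separate, direct energy estimate (test~\eqref{eq:EDP_dist_eps} with $\psi=p_t^\eta$, apply Cauchy--Schwarz, Young, then Gr\"onwall), whereas you simply observe that the constants produced by the bootstrap and Galerkin arguments of Lemmas~\ref{lem:p_L2} and~\ref{lem:regularite_EDP} already depend only on $P_0$, $\nabla V$ and $T$; both routes are valid and yield the same conclusion.
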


\begin{proof}
Since the drift coefficient in \eqref{eq:EDS_eps} is bounded, following the proof of
Lemmas \ref{lem:p_L2} and \ref{lem:regularite_EDP}, we obtain that $P_t^\eta$ has a density
$p_t^\eta$, where $p^\eta$ satisfies the first condition in Definition
\ref{def:sol_EDP_eps}. Applying It\=o's formula to $\psi(X_t^\eta)$ for
some smooth $\psi$, we find that \eqref{eq:EDP_dist_eps}
(\eqref{eq:EDP_dist_eps_nc} in the non compact case) holds for a smooth $\psi.$ Using the density of smooth
functions in $\HH^1(\Td)$, it holds for any $\psi$ in~$\HH^1(\T^d)$,
and the same is true for $\HH^1(w)$ in the non compact case.

To prove that $p^\eta$ is bounded independently of $\eta,$ notice that from the boundedness of $\nabla V$, the function
$\frac{\phieps*p_t^{\eta,\partial_1V}}{\phieps*p_t^{\eta,1}}$ is bounded
from above uniformly with respect to~$\eta$.
Consequently, from Cauchy-Schwarz inequality,
\begin{align*}\frac12\partial_t\|p_t^\eta\|^2_{\LL^2(\Td)}=&-\|\nabla p_t^\eta\|^2_{\LL^2(\Td)}-\int_\Td
p_t^\eta\nabla p_t^\eta\cdot\nabla
V+\int_\Td(\partial_1p_t^\eta)p_t^\eta
\frac{\phieps*p_t^{\eta,\partial_1V}}{\phieps*p_t^{\eta,1}}\\
\leq&-\|\nabla p_t^\eta\|_{\LL^2(\Td)}^2+K\|p_t^\eta\|_{\LL^2(\Td)}\|\nabla p_t^\eta\|_{\LL^2(\Td)}.
\end{align*}
where, the constant $K$ does not depend on $\eta.$
We finish the proof using Young's inequality, and then Gr\"onwall's
Lemma.

The proof is similar in the non compact case.
\end{proof}

Thanks to Lemma~\ref{lem:u_L2}, we can prove the relative compactness of the family
$p^\eta$ in a nice sense.

\begin{lem}\label{lem:compacite}Consider both the compact and the non
  compact cases. Under Assumptions \ref{hyp:V} and \ref{hyp:cond_init}, for any bounded open domain $\mathcal O$
  in $\mathcal D$, 
the set $({p^\eta}_{|\mathcal O})_{\eta>0}$ of restrictions of the functions 
$p^\eta$ to~$\mathcal O$ is relatively compact in the space
$\LL^2((0,T),\LL^2(\mathcal O))$. Moreover, the set $(P^\eta)_{\eta>0}$
of laws of the solution is tight.\end{lem}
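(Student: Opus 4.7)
The plan is to establish the two assertions by separate arguments: relative compactness of $(p^\eta|_\mathcal{O})$ in $\LL^2((0,T),\LL^2(\mathcal{O}))$ will follow from an Aubin--Lions compactness lemma applied to the weak formulation of \eqref{eq:EDP_eps}, while tightness of the laws $(P^\eta)$ will follow directly from a Kolmogorov-type criterion, using that the drift coefficient in \eqref{eq:EDS_eps} is bounded uniformly in $\eta$.

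For the first claim, I would observe that Lemma~\ref{lem:u_L2} already gives a uniform bound on $p^\eta$ in $\LL^2((0,T),\HH^1(\mathcal D))$, so by restriction on any bounded open $\mathcal O\subset\mathcal D$ the same holds in $\LL^2((0,T),\HH^1(\mathcal O))$. The remaining ingredient is a uniform bound on $\partial_t p^\eta$ in some space like $\LL^2((0,T),\HH^{-1}(\mathcal O))$. Reading \eqref{eq:EDP_eps}, the right-hand side involves the three terms $\mathrm{div}(p^\eta\nabla V)$, $\Delta p^\eta$ and $\partial_1\!\left(p^\eta\,\tfrac{\phieps\ast p_t^{\eta,\partial_1V}}{\phieps\ast p_t^{\eta,1}}\right)$. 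The key observation is that the ratio is bounded uniformly in $\eta$, $t$ and $x$ by $\|\partial_1V\|_{\LL^\infty}$, since the pointwise inequality $|p^{\eta,\partial_1V}|\leq\|\partial_1V\|_{\LL^\infty}\,p^{\eta,1}$ is preserved by convolution with the nonnegative kernel $\phieps$. Combined with the $\LL^\infty$-bound on $\nabla V$ from Assumption~\ref{hyp:V} and the $\LL^2((0,T),\LL^2)$ bound on $p^\eta$ and $\nabla p^\eta$, each of the three terms is bounded in $\LL^2((0,T),\HH^{-1}(\mathcal O))$ uniformly in $\eta$. Since the embedding $\HH^1(\mathcal O)\hookrightarrow\LL^2(\mathcal O)$ is compact on the bounded domain $\mathcal O$, Aubin--Lions (in the form used, e.g., in Simon's paper) gives relative compactness of $(p^\eta|_\mathcal O)$ in $\LL^2((0,T),\LL^2(\mathcal O))$.

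For the tightness of $(P^\eta)_{\eta>0}$, I would work directly with the SDE \eqref{eq:EDS_eps}. The drift
$$b_t^\eta(x) \;=\; -\nabla V(x) \;+\; \frac{\phieps\ast P_t^{\eta,\partial_1V}}{\phieps\ast P_t^{\eta,1}}(x^1)\,e_1$$
is bounded by $2\|\nabla V\|_{\LL^\infty}$ uniformly in $\eta$, $t$ and $x$, again using the pointwise comparison $|p^{\eta,\partial_1V}|\le\|\partial_1V\|_{\LL^\infty}p^{\eta,1}$. Writing $X_t^\eta=X_0+\int_0^t b_s^\eta(X_s^\eta)\,ds+\sqrt{2}\,W_t$ and using Burkholder--Davis--Gundy, one gets for any $p\ge 2$ an estimate of the form $\E[|X_t^\eta-X_s^\eta|^p]\le K_p|t-s|^{p/2}$ with $K_p$ independent of $\eta$. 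Together with the fact that the common initial law $P_0$ is tight on $\mathcal D$, Kolmogorov's criterion yields tightness of $(P^\eta)$ in $\mathcal P(\C)$ (and its projection on $\mathcal P(\mathcal D^{[0,T]})$ after taking $\{\,\cdot\,\}$).

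The only real subtlety is the uniform-in-$\eta$ control of the nonlinear term; once the identity $|p^{\eta,\partial_1V}|\le\|\partial_1V\|_{\LL^\infty}p^{\eta,1}$ is noticed, which survives the mollification by the nonnegative kernel $\phieps$, both parts of the lemma reduce to standard parabolic-compactness and Kolmogorov-tightness arguments. The non-compact case requires no modification for the first statement since we argue on bounded $\mathcal O$, and the tightness argument is unchanged.
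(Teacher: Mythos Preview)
Your proposal is correct and follows essentially the same route as the paper: Aubin--Lions/Lions compact embedding using the uniform $\LL^2((0,T),\HH^1)$ bound from Lemma~\ref{lem:u_L2} together with a uniform $\LL^2((0,T),\HH^{-1})$ bound on $\partial_t p^\eta$ read off from \eqref{eq:EDP_eps}, and then Kolmogorov's criterion for tightness using the uniform boundedness of the drift. Your explicit justification of the uniform bound on the ratio via $|p^{\eta,\partial_1V}|\le\|\partial_1V\|_{\LL^\infty}\,p^{\eta,1}$ and positivity of $\phieps$ is a nice touch that the paper leaves implicit.
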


\begin{proof}
We first prove the relative compactness of $p^\eta$ in $\LL^2((0,T),\LL^2(\mathcal
O))$. We use the fact that for a bounded open domain $\mathcal O$ and
for $p,q\in(1,\infty)$, the space
$$E_{p,q}=\{f\in\LL^p((0,T),\HH^1(\mathcal O)) ,\text{ such that }
\partial_t f\in\LL^q((0,T),\HH^{-1}(\mathcal O))\}$$ 
imbeds compactly in $\LL^p((0,T),\LL^2(\mathcal O))$ (see \cite[page 57]{lions-69}).
We already know that the set $(p^\eta)_{\eta>0}$ is bounded in
$\LL^2((0,T),\HH^1(\mathcal D))$, so that the set $({p^\eta}_{|\mathcal O})_{\eta>0}$ is bounded in
$\LL^2((0,T),\HH^1(\mathcal O))$. Thus, it is enough to show that
$(\partial_t{p^\eta}_{|\mathcal O})_{\eta>0}$ is bounded in
$\LL^q((0,T),\HH^{-1}(\mathcal O))$,
for some $q\in(1,\infty)$ to finish the proof.
The following equation holds
$$\partial_t p^\eta=\text{div}(p^\eta\nabla V)+\Delta p^\eta
-\partial_1\left(p^\eta\frac{\phieps*p_t^{\eta,\partial_1V}}{\phieps*p_t^{\eta,1}}\right),$$
showing, since $(p^\eta)_{\eta>0}$ is bounded in
$\LL^2((0,T),\HH^1(\mathcal D))$, that
$(\partial_tp^\eta)_{\eta>0}$ is bounded in
$\LL^2((0,T),\HH^{-1}(\mathcal D))$, thus,
$\partial_t{p^\eta}_{|\mathcal O}$ is bounded in
$\LL^2((0,T),\HH^{-1}(\mathcal O))$. This shows that $(p^\eta_{|\mathcal
O})_{\eta>0}$ is relatively
compact in $\LL^2((0,T),\LL^2(\mathcal O))$.

Now we prove the relative compactness of $(P^\eta)_{\eta>0}$ in
$\mathcal P(\mathcal C_T).$ For this aim, we use Kolmogorov compactness criterion. 
At time $t=0$, $X^\eta_0$ is equal to $X_0$, independently of $\eta$. Consequently, the
family~$(X_0^\eta)_{\eta>0}$ is tight.
To conclude the proof, it is enough to show that for some positive
constants $a$, $b$ and $K$, $$\sup_{\eta>0}\E\left[|X^\eta_t-X^\eta_s|^a\right]\leq K|t-s|^{1+b}$$
for $0\leq s,t\leq T$.
Since $\nabla V$ is bounded, we have, for $0\leq s,t\leq T$ and $p>1$,
\begin{align*}
\E\left[\left|X_t^\eta-X_s^\eta\right|^p\right]^{1/p}&\leq
\E\left[\left|\int_s^t\nabla
V(X_\tau^\eta)\dx\tau\right|^p\right]^{1/p}+\E\left[\left|W_t-W_s\right|^p\right]^{1/p}+\E\left[\left|\int_s^t\frac{\phieps*u_\tau^{\eta,\partial_1V}(X^{\eta,1}_\tau)}{\phieps*u_\tau^{\eta,1}(X^{\eta,1}_\tau)}\dx\tau\right|^p\right]^{1/p}\\
&\leq K\left(\left|t-s\right|+\left|t-s\right|^{1/2}\right).
\end{align*}
This rewrites 
$$\E\left[\left|X_t^\eta-X_s^\eta\right|^p\right]\leq K|t-s|^{p/2},$$
for some positive $K$. Taking $p=3$, Lemma~\ref{lem:compacite} follows.
\end{proof}

As a consequence of Lemma~\ref{lem:compacite}, using a diagonal
argument, we can extract a subsequence of $\eta\rightarrow0$, still
denoted $\eta$ such that:
\begin{itemize}
\item
$p^\eta$ converges almost everywhere on $(0,T)\times\mathcal D$ and in
$\LL^2((0,T),\LL^2(\mathcal O))=\LL^2((0,T)\times\mathcal O)$ as
$\eta$ goes to $0$, for any bounded open domain $\mathcal O$ to a
function $p^0.$ 
\item $P^\eta$ converges in $\mathcal P(\mathcal
C([0,T]))$ as $\eta$ goes to $0$ to a probability measure $P^0.$
\end{itemize}

To let $\eta$ go to zero in \eqref{eq:EDS_eps}, we finally need
the following lemma.
\begin{lem}\label{lem:expr_v_utild}
Consider both the compact and the non compact cases. Under
  Assumptions \ref{hyp:V} and \ref{hyp:cond_init}, the limit
$p^0$ of $p^\eta$ is such that
$p^0_t$ is the density of the time marginal of $P^0$ for almost all
times $t$. Moreover, the convergence of $p^\eta$ to $p^0$ also holds
in $\LL^1((0,T)\times\mathcal D)$ and up to a second subsequence extraction,~$\displaystyle\frac{\phieps*p^{\eta,\partial_1V}}{\phieps*p^{\eta,1}}$
converges almost everywhere on $(0,T)\times\T$ to $\displaystyle\frac{p^{0,\partial_1V}}{p^{0,1}}$ as $\eta$ goes to zero.
\end{lem}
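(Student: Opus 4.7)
The argument has three main stages, plus one analytical obstacle.

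First, to identify $p^0_t$ as the density of $P^0_t$ for a.e.\ $t$, I test against continuous bounded functions $\chi$ with support in a bounded open $\mathcal O\subset\mathcal D$. The convergence $P^\eta\to P^0$ in $\mathcal P(\C)$ together with the continuity of the evaluation map at time $t$ gives $P^\eta_t\Rightarrow P^0_t$ for every $t$, so $\int\chi\,dP^\eta_t\to\int\chi\,dP^0_t$ pointwise in $t$. Meanwhile $\int\chi\,dP^\eta_t=\int_{\mathcal O}\chi\,p^\eta_t$ converges in $\LL^1((0,T))$ by the $\LL^2((0,T)\times\mathcal O)$ convergence of $p^\eta$ to $p^0$, hence pointwise in $t$ along a subsequence. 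Comparing the two limits for a countable dense family of such $\chi$ identifies $p^0_t$ as the density of $P^0_t$ on each bounded open set, and exhausting $\mathcal D$ yields the global identification.

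Second, the $\LL^1((0,T)\times\mathcal D)$ convergence follows from Scheffé's lemma: by Fubini, for a.e.\ $t$ the nonnegative densities $p^\eta_t$ converge to $p^0_t$ a.e.\ in $x$, and both have unit mass on $\mathcal D$ by the first step. Scheffé's lemma gives $\|p^\eta_t-p^0_t\|_{\LL^1(\mathcal D)}\to 0$ for a.e.\ $t$, and dominated convergence (with majorant $2$) upgrades this to global $\LL^1$ convergence on $(0,T)\times\mathcal D$.

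Third, for the convolution quotient, boundedness of $\partial_1V$ transfers the $\LL^1$ convergence to $p^{\eta,\partial_1V}\to p^{0,\partial_1V}$ and $p^{\eta,1}\to p^{0,1}$ in $\LL^1((0,T)\times\T)$. Decomposing $\phieps*g_\eta-g_0=\phieps*(g_\eta-g_0)+(\phieps*g_0-g_0)$ and using $\|\phieps\|_{\LL^1(\T)}=\alpha+1$ to control the first term, together with the standard mollifier convergence $\psi_\varepsilon*g_0\to g_0$ in $\LL^1$ for the second (the constant piece $\alpha\int g_0$ vanishing as $\alpha\to 0$), both $\phieps*p^{\eta,\partial_1V}$ and $\phieps*p^{\eta,1}$ converge in $\LL^1((0,T)\times\T)$ to $p^{0,\partial_1V}$ and $p^{0,1}$ respectively. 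A further subsequence extraction promotes these to almost everywhere convergence on $(0,T)\times\T$.

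The hard part will be ensuring that the limit denominator $p^{0,1}$ is strictly positive almost everywhere, so that $p^{0,\partial_1V}/p^{0,1}$ is well-defined. For this I would establish a uniform positive lower bound $p^{\eta,1}\geq c'>0$ independent of $\eta$ and $t\in[0,T]$. Applying It\=o's formula to $f(X^{\eta,1}_t)$ for $f\in C^2(\T)$ shows that $p^{\eta,1}$ solves a linear Fokker--Planck equation on $\T$ with effective drift $\tilde b^\eta(x^1)=(\phieps*p^{\eta,\partial_1V}/\phieps*p^{\eta,1})(x^1)-p^{\eta,\partial_1V}(x^1)/p^{\eta,1}(x^1)$; each fraction is bounded by $\|\partial_1V\|_{\LL^\infty}$, so $|\tilde b^\eta|\leq 2\|\partial_1V\|_{\LL^\infty}$ uniformly in $\eta$. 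Aronson-type Gaussian lower bounds for such parabolic equations on $\T$, combined with $\inf_\T p^1_0>0$ from Assumption~\ref{hyp:cond_init}, then yield the uniform lower bound on $p^{\eta,1}$, which passes to the a.e.\ limit and guarantees $p^{0,1}\geq c'$ a.e., making the quotient convergence meaningful.
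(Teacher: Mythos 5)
Your proposal is correct, and it follows the same broad plan as the paper (identify the time marginal, pass to $\LL^1$, then handle the convolution quotient), but with a genuinely different ordering and one extra, useful step.

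The paper first claims $\LL^1((0,T)\times\mathcal D)$ convergence by writing $\int|p^\eta-p^0|=\int(p^\eta-p^0)+2\int(p^\eta-p^0)^-=2\int(p^\eta-p^0)^-$, which silently presupposes $\int_0^T\int_{\mathcal D}p^0=T$, i.e.\ that $p^0_t$ already is a probability density; the identification of $p^0_t$ as the density of $P^0_t$ is only carried out afterwards. You reverse the order: you identify $p^0_t$ with the density of $P^0_t$ first (using weak convergence of $P^\eta\to P^0$ to get $P^\eta_t\Rightarrow P^0_t$ for each $t$, plus $\LL^2((0,T)\times\mathcal O)$ convergence of $p^\eta$ on bounded sets, and a countable dense family of test functions), and only then invoke Scheff\'e. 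This removes the implicit circularity and is the cleaner route. The paper, by contrast, identifies the marginal by testing against path functionals $\Psi(Y)=\int_0^T\theta(t)\tilde\Psi(Y_t)\,\dx t$ and using the already-obtained $\LL^1$ convergence; both identification strategies are valid, but yours is self-contained in a way the paper's ordering is not without an extra line (one can recover $\int_0^T\int_{\mathcal D}p^0=T$ from uniform tightness of $(p^\eta_t)$ and the $\LL^2$ convergence on compacts, but the paper does not say so).

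The second genuine difference is your treatment of the denominator. You correctly observe that $p^{\eta,1}$ does \emph{not} solve the heat equation on $\T$ — unlike $p^1$ for the exact equation \eqref{eq:EDS}, where the conditional expectation exactly cancels the drift in Lemma~\ref{lem:chaleur} — because the Nadaraya--Watson smoothing breaks the cancellation. It satisfies instead a one-dimensional Fokker--Planck equation with an extra drift $\tilde b^\eta$ which is, however, bounded by $2\|\partial_1V\|_{\LL^\infty}$ uniformly in $\eta$. Coupled with $\inf_\T p^1_0>0$ (Assumption~\ref{hyp:cond_init}) and Aronson-type Gaussian lower bounds on $\T$, this gives a uniform lower bound $p^{\eta,1}\geq c'>0$ on $[0,T]\times\T$, which passes to $p^{0,1}$ and makes the a.e.\ quotient convergence unambiguous. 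The paper omits this point; one could argue it is harmless there because the quotient only ever appears multiplied by $p^0$ or $p^{0,1}$ in the martingale-problem estimates of Theorem~\ref{th:convergence_eta}, but as a statement about a.e.\ convergence of the quotient, your additional argument is the right thing to supply. In short: same goal, more careful ordering, and an explicit resolution of a positivity issue that the paper leaves implicit.
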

\begin{proof}
We first prove that $p^\eta$ converges to $p^0$ in
$\LL^1((0,T)\times\mathcal D)$. It holds
\begin{align*}
\int_0^T\int_{\mathcal D}|p^\eta-p^0|&=\int_0^T\int_{\mathcal
  D}(p^\eta-p^0)+2\int_0^T\int_{\mathcal D}(p^\eta-p^0)^-\\
&=2\int_0^T\int_{\mathcal D}(p^\eta-p^0)^-.
\end{align*}
But $p^\eta$ converges almost everywhere to $p^0$, and
$(p^\eta-p^0)^-$ is bounded from above by the integrable function
$p^0$. Consequently, by the Lebesgue theorem, $p^\eta$ converges to $p^0$
in $\LL^1((0,T)\times\mathcal D)$.

A consequence of this convergence and of the boudedness of $V$ is that the sequences
$(p^{\eta,\partial_1V})_{\eta>0}$ and $(p^{\eta,1})_{\eta>0}$ converge
in~$\LL^1((0,T)\times\T)$ respectively to $p^{0,\partial_1V}$ and
$p^{0,1}$.

As a consequence, $\phieps*p^{\eta,1}$ and
$\phieps*p^{\eta,\partial_1V}$ also converge in $\LL^1((0,T)\times\T)$
to the same limits. Therefore, up to the extraction of a
second subsequence, we have pointwise convergence almost everywhere for
the denominator and the numerator of $\displaystyle\frac{\phieps*p^{\eta,\partial_1V}}{\phieps*p^{\eta,1}}$.

Now we show that $p_t^0$ is for almost all $t$ the density of the time
marginal $P^0_t$ of $P^0.$
Since $P^\eta$ converges to $P^0$ as
  $\eta$ goes to $0$ in~$\mathcal P(\mathcal C_T)$, then
  $\E\left[\Psi(X^\eta)\right]$ converges to $\E\left[\Psi(X^0)\right]$
  as $\eta$ goes to $0$, for any bounded continuous
  functional $\Psi$ on $\mathcal C_T$. Taking a
  function of the form
  $\Psi(Y)=\int_0^T\theta(t)\tilde\Psi(Y_t)\dx t$ where $\tilde\Psi$ and $\theta$ are
  bounded and continuous, one has
$$\E[\Psi(X^\eta)]=\int_0^T\theta(t)\left(\int_\Td\tilde\Psi p_t^\eta\right)\dx t.$$
Moreover, since $p^\eta$ converges to $p^0$ in $\LL^1((0,T)\times\Td)$,
 one has 
$$\int_0^T\left(\theta(t)\int_\Td\tilde\Psi p_t^\eta\right)\dx t\rightarrow_{\eta\rightarrow0}\int_0^T\theta(t)\left(\int_\Td\tilde\Psi p_t^0\right)\dx t.$$
As a result, 
$$\E\left[\int_0^T\theta(t)\tilde\Psi(X_t^0)\dx t\right]=\int_0^T\theta(t)\left(\int_\Td\tilde\Psi p_t^0\right)\dx t,$$
so that, almost everywhere, $p_t^0$ is the time marginal of $P^0$.
\end{proof}

We can now prove Theorem \ref{th:convergence_eta}. We want to prove that
$P^0$ is a solution to the martingale problem defined in Definition
\ref{def:prob_mart}. It is enough to show that for $0\leq
s_1\leq\hdots\leq s_n\leq s\leq t$, any bounded continuous function~$g$
and any twice differentiable function $\psi$ with bounded derivatives,
one has $\int_{\C}g(x_{s_1},\hdots,x_{s_n})(m_t-m_s)\dx P^0=0.$

Under the probability measure $P^\eta$, the canonical process $x\in\mathcal
C([0,T])$ is such that 

$$m_t^\eta=\psi(x_t)-\psi(x_0)-\int_0^t\Delta\psi(x_s)\dx s+\int_0^t\nabla
V(x_s)\nabla\psi(x_s)\dx s-\int_0^t\partial_1\psi(x_s)\frac{\phieps*p_s^{\eta,\partial_1V}(x_s^1)}{\phieps*p_s^{\eta,1}(x_s^1)}\dx s$$
is a martingale. We thus have
$$\int_\C g(x_{s_1},\hdots,x_{s_n})(m_t^\eta-m_s^\eta)\dx P^\eta=0.$$
Consequently, denoting $\tilde\eta=(\tilde\varepsilon,\tilde\alpha)$

\begin{align*}
\left|\int_\C g(x_{s_1},\hdots,x_{s_n})(m_t-m_s)\dx P^0\right|&\leq\left|\int_\C
  g(x_{s_1},\hdots,x_{s_n})(m_t^{\tilde\eta}-m_s^{\tilde\eta})\dx P^\eta\right|\\
&\quad+\left|\int_\C g(x_{s_1},\hdots,x_{s_n})(m^{\tilde\eta}_t-m_s^{\tilde\eta})\dx(P^\eta-P^0)\right|\\
&\quad+\left|\int_\C
  g(x_{s_1},\hdots,x_{s_n})\big((m^{\tilde\eta}_t-m_s^{\tilde\eta})-(m_t-m_s)\big)\dx P^0\right|.
\end{align*}
Taking $\lim\sup_{{\tilde\eta}\rightarrow 0}\lim\sup_{\eta\rightarrow0}$,
we obtain:

\begin{equation}\label{eq:limsup}
\left|\int_\C
    g(x_{s_1},\hdots,x_{s_n})(m_t-m_s)\dx P^0\right|\leq\lim\sup_{{\tilde\eta}\rightarrow 0}\lim\sup_{\eta\rightarrow0}\left|\int_\C
  g(x_{s_1},\hdots,x_{s_n})(m_t^{\tilde\eta}-m_s^{\tilde\eta})\dx P^\eta\right|.
\end{equation}
Indeed, $g(x_{s_1},\hdots,x_{s_n})(m_t^{\tilde\eta}-m_s^{\tilde\eta})$ is a bounded
continuous function of $x$, and $P^\eta$ converges to
$P^0$. Moreover, we have 

\begin{align*}
&\left|\int_\C
  g(x_{s_1},\hdots,x_{s_n})\big((m^{\tilde\eta}_t-m_s^{\tilde\eta})-(m_t-m_s)\big)\dx P^0\right|\\
=&\left|\int_\C\int_s^tg(x_{s_1},\hdots,x_{s_2})\partial_1\psi(x_\tau)
\left[\frac{\phiepss*p_\tau^{{\tilde\eta},\partial_1V}}{\phiepss*p_\tau^{{\tilde\eta},1}}-\frac{p_\tau^{0,\partial_1V}}
{p_\tau^{0,1}}\right](x_\tau^1)~\dx\tau~\dx P^0(x)\right|\\
\leq&K\int_s^t\int_\mathcal D\left|\left[\frac{\phiepss*p_\tau^{{\tilde\eta},\partial_1V}}
{\phiepss*p_\tau^{{\tilde\eta},1}}-\frac{p_\tau^{0,\partial_1V}}{p_\tau^{0,1}}\right](y)\right|p_\tau^0(y)~\dx y~\dx\tau.
\end{align*}
This last integral goes to $0$ as ${\tilde\eta}$ goes to $0$, since the function 
$\left[\frac{\phieps*p_\tau^{{\tilde\eta},\partial_1V}}
{\phiepss*p_\tau^{{\tilde\eta},1}}-\frac{p_\tau^{0,\partial_1V}}{p_\tau^{0,1}}\right]$
converges almost everywhere to $0$ on $[s,t]\times\mathcal D$,
and is bounded fom above by some positive constant.
To conclude, we estimate the right hand side in \eqref{eq:limsup}:
\begin{align*}
&\left|\int_\C g(x_{s_1},\hdots,x_{s_n})(m_t^{\tilde\eta}-m_s^{\tilde\eta})\dx
P^\eta(x)\right|\\
=&\left|\int_\C g(x_{s_1},\hdots,x_{s_n})((m_t^{\tilde\eta}-m_s^{\tilde\eta})-(m_t^\eta-m_s^\eta))\dx
P^\eta(x)\right|\\
=&\left|\int_{\C}
  g(x_{s_1},\hdots,x_{s_n})\int_s^t\partial_1\psi(x_\tau)\left(\frac{\phieps*p_\tau^{\eta,\partial_1V}}
{\phieps*p_\tau^{\eta,1}}-\frac{\phiepss*p_\tau^{{\tilde\eta},\partial_1V}}{\phiepss*p_\tau^{{\tilde\eta},1}}\right)(x_\tau^1)
\dx\tau\dx P^\eta(x)\right|\\
\leq&K\int_s^t\int_\C\left|\frac{\phieps*p_\tau^{\eta,\partial_1V}(x_\tau^1)}
{\phieps*p_\tau^{\eta,1}(x_\tau^1)}-\frac{\phiepss*p_\tau^{{\tilde\eta},\partial_1V}(x_\tau^1)}
{\phiepss*p_\tau^{{\tilde\eta},1}(x_\tau^1)}\right|\dx P^\eta(x)\dx\tau\\
=&K\int_s^t\int_\T\left|\frac{\phieps*p_\tau^{\eta,\partial_1V}(y)}{\phieps*p_\tau^{\eta,1}(y)}
-\frac{\phiepss*p_\tau^{{\tilde\eta},\partial_1V}(y)}{\phiepss*p_\tau^{{\tilde\eta},1}(y)}\right|p_\tau^{\eta,1}(y)\dx y\dx\tau.
\end{align*}
This last integral tends to $0$ as $\eta$ and ${\tilde\eta}$ go to $0$,
since $p^{\eta,1}$ converges in $\LL^1((s,t)\times\T)$, and since
$\displaystyle\frac{\phieps*p_\tau^{\eta,\partial_1V}(y)}{\phieps*p_\tau^{\eta,1}(y)}
-\frac{\phiepss*p_\tau^{{\tilde\eta},\partial_1V}(y)}{\phiepss*p_\tau^{{\tilde\eta},1}(y)}$ is bounded and converges almost everywhere to $0$.
We then obtain Theorem \ref{th:convergence_eta}.

\subsection{Another existence result for the nonlinear process}
From Theorem \ref{th:convergence_eta}, we know that existence
holds for
\eqref{eq:EDS} under some regularity assumptions on the initial
condition. Indeed, if $P^0$ is the limit of some subsequence of
$P^\eta$, then the canonical process~$x$ defined on the canonical space
$(\C,P^0)$ is a solution to Equation \eqref{eq:EDS}. By approximating the initial condition by regular densities,
one can relax the regularity assumption.
\begin{theo}\label{th:existence_EDS}
Consider both the compact and non compact cases. Under Assumption
\ref{hyp:V}, weak existence holds for Equation
\eqref{eq:EDS} with given initial condition~$X_0$. Moreover, for
positive $s$, the law of
$\{X_s\}$ has a density $p_s$ such that, for $0<t<T$,
$$p\in\LL^\infty((t,T),\LL^2(\mathcal
D))\bigcap\LL^2((t,T),\HH^1(\mathcal D)).$$
\end{theo}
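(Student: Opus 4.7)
The strategy is to approximate the initial condition by a sequence of nicer ones satisfying Assumption \ref{hyp:cond_init} (and \ref{hyp:cond_init_nc} in the non-compact case), apply the existence result of Theorem \ref{th:convergence_eta} to each, and then pass to the limit on the laws of the solutions.

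Concretely, I would pick an approximating sequence $X_0^k$ so that the law $P_0^k$ of $\{X_0^k\}$ converges weakly to $P_0$ and has a density $p_0^k$ fulfilling Assumption \ref{hyp:cond_init} in the compact case (and also Assumption \ref{hyp:cond_init_nc} in the non-compact case). A natural choice is to truncate $X_0$ to a large ball (non-compact case only) and then convolve its law with a small Gaussian, so $p_0^k$ is smooth, positive, in $\LL^2(\mathcal D)$ (resp. in $\LL^2(w)\cap\LL^1(w)$), and its first-coordinate marginal is bounded from below. Theorem \ref{th:convergence_eta} then supplies, for each $k$, a process $X^k$ solving \eqref{eq:EDS} with initial law $P_0^k$; denote its law on $\mathcal C_T$ by $P^k$.

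Next I would establish tightness of $(P^k)_k$ in $\mathcal P(\mathcal C_T)$ exactly as in Lemma \ref{lem:compacite}: the drift $-\nabla V(X_s^k)+\E[\partial_1 V(X_s^k)\mid\{X_s^{k,1}\}]e_1$ is uniformly bounded thanks to Assumption \ref{hyp:V}, so by Kolmogorov's criterion $\E[|X_t^k-X_s^k|^p]\le K|t-s|^{p/2}$ uniformly in $k$, and tightness at $t=0$ follows from the weak convergence $P_0^k\to P_0$. Extracting a subsequence, $P^k$ converges to some $P^0\in\mathcal P(\mathcal C_T)$. To show $P^0$ comes from a solution of \eqref{eq:EDS} with initial law $P_0$, I would argue at the level of the martingale problem of Definition \ref{def:prob_mart}, reusing the passage-to-the-limit machinery from the proof of Theorem \ref{th:convergence_eta}. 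Fix $0<t<T$; by Lemma \ref{lem:mild} each $p^k_{t/2}$ is a density whose $\LL^2(\mathcal D)$ (resp. $\LL^2(w)$) norm is dominated by $\|G_{t/2}\star P_0^k\|$ plus the drift contribution, and since $G_{t/2}$ is itself in $\LL^2(\mathcal D)$ (resp. in the right weighted space by the non-compact part of Lemma \ref{lem:p_L2}) these bounds are uniform in $k$. Starting the analysis of Lemma \ref{lem:regularite_EDP} from time $t/2$ instead of $0$ then yields a bound
\[
\|p^k\|_{\LL^\infty((t,T),\LL^2(\mathcal D))}+\|p^k\|_{\LL^2((t,T),\HH^1(\mathcal D))}\le K(t,T,\nabla V)
\]
independent of $k$ (and analogously with the weight $w$ in the non-compact case).

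From this uniform bound plus the bound on $\partial_t p^k$ coming from the PDE \eqref{eq:EDP}, the compactness argument of Lemma \ref{lem:compacite} gives a further subsequence along which $p^k\to p^0$ almost everywhere on $(t,T)\times\mathcal D$ and strongly in $\LL^2((t,T),\LL^2_{\rm loc})$, where $p^0_s$ is the density of $P_s^0$ for almost every $s\in(t,T)$. Since $t>0$ is arbitrary, $P_s^0$ has a density $p^0_s$ for every $s>0$, and $p^0$ sits in the announced spaces on every $(t,T)$. The main obstacle, as in Theorem \ref{th:convergence_eta}, is to pass to the limit in the nonlinear drift $p_s^{k,\partial_1 V}/p_s^{k,1}$: for this I would use that by Lemma \ref{lem:chaleur} applied to each $X^k$, $p^{k,1}$ solves the heat equation on $\T$ with initial datum $p_0^{k,1}$, and the weak convergence $p_0^{k,1}\to P_0^1$ combined with the heat semigroup shows that $p^{0,1}_s$ is smooth and bounded below on $[t,T]$ for every $t>0$ (the bound may depend on $t$, but that is enough). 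Together with the $\LL^1$-convergence of $p^{k,\partial_1 V}$ deduced as in Lemma \ref{lem:expr_v_utild}, this lets me pass to the limit in the martingale problem exactly as at the end of the proof of Theorem \ref{th:convergence_eta}, showing $P^0$ is the law of a weak solution to \eqref{eq:EDS} with initial law $P_0$ and concluding the proof.
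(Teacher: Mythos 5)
Your overall strategy --- approximate the initial law by nicer densities $p_0^k$, solve, prove tightness, extract a limit, and identify it through the martingale problem --- is exactly the one the paper uses. The gap is in the single step on which everything hinges: establishing that $\|p^k_{t/2}\|_{\LL^2(\mathcal D)}$ is bounded \emph{uniformly in $k$} for fixed $t>0$.

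You claim this follows from the mild representation \eqref{eq:mild}: $\|G_{t/2}\star P_0^k\|_{\LL^2}$ is indeed uniformly bounded by $\|G_{t/2}\|_{\LL^2}$ via Young's inequality, but you then assert without argument that ``the drift contribution'' is also uniform. It is not, at least not for the naive reason. Bounding $\nabla V p_s^k$ in $\LL^1$ (the only norm controlled uniformly in $k$) and using Young gives
\[
\Bigl\|\nabla G_{t/2-s}\star(\nabla V p_s^k)\Bigr\|_{\LL^2(\mathcal D)}\leq K\,\|\nabla G_{t/2-s}\|_{\LL^2(\mathcal D)}\sim K\,(t/2-s)^{-(d+2)/4},
\]
and the time integral $\int_0^{t/2}(t/2-s)^{-(d+2)/4}\,\dx s$ diverges as soon as $d\geq2$. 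One would need an iterated bootstrap as in Lemma~\ref{lem:p_L2} but rewritten with $t$-singular weights (since $\|p_0^k\|_{\LL^q}$ blows up in $k$, the bootstrap must be re-anchored at $G_t\star P_0^k$ rather than at $p_0^k$, and then the singularities at $s=0$ must be tracked through each iteration). You have not done this, and it is not obviously routine. Instead, the paper's proof sidesteps the whole issue with a Girsanov change of measure: since the drift $b^k$ is bounded uniformly in $k$, under $\mathbb Q_k$ the process $X^k_t$ is $X^k_0$ plus $\sqrt2$ times a Brownian increment, whose law has a density bounded by $Kt^{-d/2}$ independently of $k$ and of $P_0^k$; Cauchy--Schwarz with $(L_t^k)^{-1}$ (whose second moment is controlled by the bounded drift) then gives $\|p_t^k\|_{\LL^2(\mathcal D)}\leq K t^{-d/4}$ uniformly in $k$. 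That is the key idea your proposal is missing.

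Two minor points. First, you invoke ``the non-compact part of Lemma \ref{lem:p_L2}'' to work in $\LL^2(w)$, but the statement of Theorem~\ref{th:existence_EDS} only claims regularity in the unweighted spaces $\LL^2(\mathcal D)$, $\HH^1(\mathcal D)$ in both cases, which is fortunate because Assumption~\ref{hyp:cond_init_nc} is not available here; you should drop the weighted-space detour. Second, your lower bound on $p^{0,1}_s$ for $s\geq t$ via the heat semigroup is fine, but you should note that the bound is $\inf_{\T}$ of the heat kernel at time $t$, which is uniform in $k$ because each $p_0^{k,1}$ integrates to one --- that observation is what makes the martingale-problem limit go through at positive times even though the initial marginals $p_0^{k,1}$ are not uniformly bounded below.
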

Notice that, under the hypotheses of Theorem \ref{th:existence_EDS}, we have no uniqueness result.

\begin{proof}
Theorem \ref{th:convergence_eta} yields existence for \eqref{eq:EDS} when
the initial condition satisfies Assumption~\ref{hyp:cond_init}. To prove
existence without assumption on the initial condition, we use
approximations of the initial condition. Let $(p_0^k)_{k\in\mathbb N}$ be a sequence of probability
densities satisfying Assumption \ref{hyp:cond_init} and converging to
$p^0$ in $\mathcal P(\mathcal D)$ (for example, this sequence can be obtained by convolution
with a gaussian kernel). From Theorem~\ref{th:convergence_eta}, there
exists a solution $(X_t^k)$ to Equation \eqref{eq:EDS} driven by a
Brownian motion~$W$ defined on some
probability space $(\Omega,\mathcal F,\mathbb P)$, such that $X_0^k$
admits $p_0^k$ as density.

As in the proof of Lemma \ref{lem:compacite}, we can apply Kolmogorov
criterion, so that the family of distributions $P^k$ of
$(\{X^k_t\})_{0\leq t\leq T}$ is tight.
Consequently, we can extract from $(P^k)$ a converging subsequence whose
limit is denoted $P$. To prove that $P$ satisfies the martingale problem
defined in Definition~\ref{def:prob_mart}, we need some estimate on the time marginals of $P^k$,
uniformly in $k$.

According to Lemma \ref{lem:regularite_EDP}, the law of $\{X_t^k\}$ has a
density $p_t^k$ such that
$p^k$ lies in $\LL^\infty((0,T),\LL^2(\mathcal D))$ and 
$\LL^2((0,T),\HH^1(\mathcal D))$.
Notice that the drift coefficient 
$b^k_t(X_t)\stackrel{\rm def}=-\nabla V(X_t)+\E[\partial_1V(X_t)|\{X_t^1\}]e_1$ in Equation \eqref{eq:EDS} is bounded, so that we can apply the Girsanov
Theorem. Indeed, define $$L_t^k=\exp\left(-\frac1{\sqrt2}\int_0^tb^k_s(X_s^k)\dx
  W_s-\frac14\int_0^t\|b^k_s(X_s^k)\|^2\dx s\right).$$
Novikov's Condition is satisfied for this process, so that the formula
$$\mathbb Q_k(A)=\E[\mathbf 1_AL_t^k],$$ for $A\in\sigma(W_s)_{s\leq t}$, defines a
probability distribution $\mathbb Q_k$
on $\Omega$ such that, under $\mathbb Q_k$, the process
$$\frac1{\sqrt2}\left(X_t^k-X_0^k\right)=W_t+\frac1{\sqrt2}\int_0^tb_s^k(X_s^k)\dx
s$$ is a Brownian motion.
Denote $\gamma_t^k$ the law of $\{X_t^k\}$ under $\mathbb Q_k$. Notice
that since, under $\mathbb Q_k$, $X_t^k$ is the sum of $\sqrt2$ times a
Brownian motion at time $t$ and an independent random variable
$X_0^k$, $\gamma_t^k$ has a
density with respect to the Lebesgue measure which is bounded by $\frac
K{t^{d/2}}$ where $K$ is a constant not depending on $k$ and $t.$
As a result, for a given
function $\psi$ in $\LL^2(\mathcal D)$, one has
\begin{align*}
\left|\int_\mathcal D \psi(x)\dx P_t^k(x)\right|&=\left|\E\left[\psi\left(X_t^k\right)\right]\right|\\
&=\left|\E_{\mathbb Q_k}\left[\psi\left(X_t^k\right)\left(L_t^k\right)^{-1}\right]\right|\\
&\leq\left(\int_\mathcal D\psi^2\dx \gamma_t^k\right)^{\frac12}\E\left[(L_t^k)^{-2}\right]^{1/2}\\
&\leq \frac K{t^{d/4}}\left\|\psi\right\|_{\LL^2(\mathcal D)},
\end{align*}
where $K$
is a positive constant, which does not depend on $k$ since $\left|b^k\right|$ is bounded
from above by~$\|\nabla V\|_{\LL^\infty}$.
Consequently, for any $0<t<T$, $\|p^k_s\|_{\LL^2(\mathcal D)}$ is
bounded uniformly in $k$ and in $s\in[t,T]$.
Moreover, since $p^k$ is a solution to Equation \eqref{eq:EDP} in the sense
of Definition~\ref{def:sol_EDP} it holds, from \eqref{eq:EDP_dist}
$$\partial_t\|p_s^k\|_{\LL^2(\mathcal D)}^2\leq-\|\nabla
p_s^k\|_{\LL^2(\mathcal D)}^2+K\|p_s^k\|_{\LL^2(\mathcal D)}^2,$$
so that $(p^k)_{k\in\mathbb N}$ is also bounded in $\LL^2((t,T),\HH^1(\mathcal D))$.
Adapting the proof of
Lemma~\ref{lem:compacite}, we find that the family $(p^k_{|\mathcal O})$ is compact in
$\LL^2((t,T),\LL^2(\mathcal O))$ for any open subset $\mathcal O$
of $\mathcal D$.
By a diagonal argument, and using the proof of Lemma \ref{lem:expr_v_utild} we can thus construct a subsequence $k_n$ such
that 
\begin{itemize}
\item $P^{k_n}$ converges to a probability measure $P^0$ whose time
  marginals $P_t$ have a density $p_t^0$, for all~$t>0$,
\item $p^{k_n}$ converges almost everywhere on $(0,T)\times\mathcal D$ and in $\LL^1((0,T)\times\mathcal D)$ to $p^0$,
\item $\displaystyle\frac{p^{k_n,\partial_1V}}{p^{k_n,1}}$ converges almost everywhere on
  $(0,T)\times\mathcal D$ to $\displaystyle\frac{p^{0,\partial_1V}}{p^{0,1}}$.
\end{itemize}
Then, adapting the proof of
Theorem \ref{th:convergence_eta}, we see that $P^0$ solves the martingale problem.
\end{proof}

\subsection{Rate of convergence}

We are going to exhibit a control on the rate of the convergence of $p^\eta$ to
$p$. Moreover, we give an estimate of the difference between $\displaystyle\frac{\phieps*p^{\eta,\partial_1V}}{\phieps*p^{\eta,1}}$
and the biasing force $A'_t=\displaystyle\frac{p^{\partial_1V}_t}{p^1_t}$ which is the quantity one is interested in practice.
\begin{theo}\label{th:vitesse}
Under Assumptions \ref{hyp:V} and \ref{hyp:cond_init}, it
holds, in the compact case,
$$\|p^\eta-p\|_{\LL^\infty((0,T),\LL^2(\T^d))}+\|p^\eta-p\|_{\LL^2((0,T),\HH^1(\T^d))}\leq
K(\alpha+\sqrt\varepsilon),$$
and, in the non compact case, under the additional Assumption \ref{hyp:cond_init_nc}
$$\|p^\eta-p\|_{\LL^\infty((0,T),\LL^2(w))}+\|p^\eta-p\|_{\LL^2((0,T),\HH^1(w))}\leq
K(\alpha+\sqrt\varepsilon),$$
 for some positive constant $K$ not
depending on $\alpha$ and $\varepsilon$.
Moreover, we have the following bound on the estimation of the biasing force:
$$\left\|\frac{\phieps*p_t^{\eta,\partial_1V}}{\phieps*p_t^{\eta,1}}
-\frac{p_t^{\partial_1V}}{p_t^1}\right\|_{\LL^2((0,T),\LL^\infty(\T))}\leq K\left(\alpha+\sqrt\varepsilon\right).$$
\end{theo}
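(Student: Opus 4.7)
The plan is to mimic the energy-estimate structure used in the proof of Theorem~\ref{th:unicite_EDP}: subtract the two variational formulations \eqref{eq:EDP_dist_eps_nc} and \eqref{eq:EDP_dist_nc} for $e_t := p^\eta_t - p_t$, test against $e_t$ with weight $w$, and run Gr\"onwall's lemma on $\|e_t\|^2_{\LL^2(w)}$. The difference of drift densities decomposes cleanly as $p^\eta_t b^\eta_t - p_t A'_t = e_t A'_t + p^\eta_t(b^\eta_t - A'_t)$, where $A'_t = p^{\partial_1V}_t/p^1_t$ and $b^\eta_t = (\phieps*p^{\eta,\partial_1V}_t)/(\phieps*p^{\eta,1}_t)$, so that the heart of the argument is an $\LL^\infty(\T)$-estimate on $b^\eta_t - A'_t$; this same estimate then gives the second claim of the theorem for free.

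I would first split $b^\eta_t - A'_t = I_t + II_t$, with $II_t := (\phieps * p^{\partial_1V}_t)/(\phieps*p^1_t) - A'_t$ the \emph{pure regularization error}, independent of $p^\eta$. Since $p^{\partial_1V}_t = A'_t p^1_t$, a direct manipulation rewrites $II_t(x) = (\phieps*p^1_t(x))^{-1}\int_\T \phieps(x-y)(A'_t(y)-A'_t(x)) p^1_t(y)\,\dx y$. By Lemma~\ref{lem:chaleur} and the maximum principle $p^1_t\geq \inf p^1_0>0$ uniformly, hence $\phieps*p^1_t\geq \inf p^1_0$. Lemma~\ref{lem:d1V_continu} gives $p^{\partial_1V}_t\in\HH^1(\T)$, and the $1$-dimensional Sobolev embedding $\HH^1(\T)\hookrightarrow \mathcal C^{0,1/2}(\T)$ then yields $A'_t\in\mathcal C^{0,1/2}(\T)$ with H\"older norm controlled by $\|p_t\|_{\HH^1(w)}$. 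The $\alpha$-part of $\phieps$ contributes $O(\alpha)$ and the $\psi_\varepsilon$-part contributes $O(\sqrt\varepsilon)$ thanks to the H\"older bound and the support of $\psi_\varepsilon$, so $\|II_t\|_{\LL^\infty(\T)}\leq K(\alpha+\sqrt\varepsilon) h_t$ with $h_t\in\LL^2(0,T)$ by Lemma~\ref{lem:regularite_EDP}.

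For $I_t = b^\eta_t - (\phieps*p^{\partial_1V}_t)/(\phieps*p^1_t)$, an analogous algebraic identity gives
\[
I_t = \frac{1}{\phieps*p^{\eta,1}_t}\,\phieps*\bigl(e^{\partial_1V}_t - A'_t e^1_t\bigr) \;+\; \text{(a similar commutator involving } e^1_t\text{)},
\]
whose numerators are controlled in $\LL^\infty(\T)$ by $K\|e_t\|_{\HH^1(w)}$ via $\HH^1(\T)\hookrightarrow\LL^\infty(\T)$ and Lemma~\ref{lem:d1V_continu}. The denominator is only trivially bounded below by $\alpha$, which is not enough; to obtain an $\alpha$-independent bound I would introduce the bootstrap time
\[
\tau_\eta := \sup\Bigl\{t\leq T : \inf_{s\leq t}\inf_\T \phieps*p^{\eta,1}_s \geq \tfrac12 \inf p^1_0\Bigr\},
\]
positive since $p^\eta_0 = p_0$. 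On $[0,\tau_\eta]$ the denominator is uniformly bounded below, giving $\|I_t\|_{\LL^\infty(\T)}\leq K\|e_t\|_{\HH^1(w)}$. Plugging the estimates on $I_t$ and $II_t$ into the energy identity and using Young's inequality to absorb $\|\nabla e_t\|_{\LL^2(w)}$ factors yields, on $[0,\tau_\eta]$,
\[
\partial_t\|e_t\|^2_{\LL^2(w)} + \|\nabla e_t\|^2_{\LL^2(w)} \leq K\|e_t\|^2_{\LL^2(w)} + K(\alpha+\sqrt\varepsilon)^2 h_t^2,
\]
and Gr\"onwall with $e_0=0$ delivers $\|e\|_{\LL^\infty((0,\tau_\eta),\LL^2(w))} + \|e\|_{\LL^2((0,\tau_\eta),\HH^1(w))}\leq K(\alpha+\sqrt\varepsilon)$.

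To close the bootstrap and conclude $\tau_\eta = T$ when $\alpha+\sqrt\varepsilon$ is small, I would interpolate (in the spirit of \eqref{eq:interpolation}) between the $\LL^2(w)$ control of $e_{\tau_\eta}$, available pointwise in $t$ because $e\in\mathcal C([0,T],\LL^2(w))$, and an $\HH^1$-type bound, in order to produce a small pointwise $\LL^\infty(\T)$ estimate on $e^1_{\tau_\eta}$ that contradicts the breakdown of the lower bound at $\tau_\eta$. The bound on $\|b^\eta-A'\|_{\LL^2((0,T),\LL^\infty(\T))}$ is then a direct byproduct of the $I$--$II$ decomposition applied on the full interval $[0,T]$. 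The main obstacle is precisely this last pointwise-in-time step: $e$ is not a priori in $\LL^\infty((0,T),\HH^1(w))$, so transferring $\LL^2$-in-time regularity into the genuinely pointwise-in-$t$, $\LL^\infty$-in-$x^1$ control needed to keep $\phieps*p^{\eta,1}$ uniformly bounded below is the delicate technical point that forces the "$\alpha$ and $\varepsilon$ small enough" restriction in the statement.
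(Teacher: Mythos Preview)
Your overall plan (subtract the variational formulations, energy estimate on $e_t=p^\eta_t-p_t$, Gr\"onwall) matches the paper's, but there are two genuine problems.

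\textbf{The $I_t$ bound is too crude to close the energy estimate.} You estimate $\|I_t\|_{\LL^\infty(\T)}\leq K\|e_t\|_{\HH^1(w)}$ via the plain embedding $\HH^1(\T)\hookrightarrow\LL^\infty(\T)$. In the energy identity this produces a term of the form
\[
K\|\nabla e_t\|_{\LL^2(w)}\,\|p^\eta_t\|_{\LL^2(w)}\,\|e_t\|_{\HH^1(w)}\ \leq\ K\|\nabla e_t\|_{\LL^2(w)}\bigl(\|e_t\|_{\LL^2(w)}+\|\nabla e_t\|_{\LL^2(w)}\bigr),
\]
and the piece $K\|\nabla e_t\|_{\LL^2(w)}^2$ \emph{cannot} be absorbed into the dissipation $\|\nabla e_t\|_{\LL^2(w)}^2$ on the left, since $K$ is a fixed constant depending on $V$, $T$, and $\inf p_0^1$. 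Your claimed inequality $\partial_t\|e_t\|^2+\|\nabla e_t\|^2\leq K\|e_t\|^2+K(\alpha+\sqrt\varepsilon)^2h_t^2$ therefore does not follow. The paper fixes this exactly as in the proof of Theorem~\ref{th:unicite_EDP}: instead of the crude embedding one uses the interpolation \eqref{eq:interpolation},
\[
\|p_t^{\partial_1V}-p_t^{\eta,\partial_1V}\|_{\LL^\infty(\T)}\leq K\|e_t\|_{\HH^1(w)}^{1/2+\gamma}\|e_t\|_{\LL^2(w)}^{1/2-\gamma},
\]
so that the bad product becomes $K\|\nabla e_t\|^{3/2+\gamma}\|e_t\|^{1/2-\gamma}$, which \emph{is} absorbable by Young's inequality for any $\gamma\in(0,1/2)$.

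\textbf{The bootstrap on $\phieps*p^{\eta,1}$ is unnecessary.} Your decomposition $p^\eta b^\eta-pA'=eA'+p^\eta(b^\eta-A')$ forces a lower bound on $\phieps*p^{\eta,1}$, hence the stopping time $\tau_\eta$ and the incomplete closing step that you yourself flag as delicate. The paper avoids this entirely by decomposing instead as
\[
p_tA'_t-p^\eta_tb^\eta_t=\frac{p_t}{p_t^1}\bigl(p_t^{\partial_1V}-\phieps*p_t^{\eta,\partial_1V}\bigr)+\frac{p_t}{p_t^1}\,b^\eta_t\,\bigl(\phieps*p_t^{\eta,1}-p_t^1\bigr)+(p_t-p^\eta_t)\,b^\eta_t.
\]
Here the only denominator appearing alone is $p_t^1$, bounded below by Lemma~\ref{lem:chaleur} and the maximum principle, while $b^\eta_t=\phieps*p_t^{\eta,\partial_1V}/\phieps*p_t^{\eta,1}$ is kept as a whole and is uniformly bounded by $\|\partial_1V\|_\infty$ (it is a $\phieps$-weighted average of $\partial_1V$). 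No lower bound on $\phieps*p^{\eta,1}$ is ever needed, and the theorem holds for \emph{all} $\alpha,\varepsilon>0$, not just small ones.
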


\begin{proof}
We give the proof in the non compact case, the one in the compact case
being very similar.
Similar calculations as in the proof of Theorem~\ref{th:unicite_EDP} yield:
\begin{align*}
\frac12\partial_t\|p_t-p^\eta_t\|_{\LL^2(w)}^2+\|\nabla
p_t-\nabla
p_t^\eta\|_{\LL^2(w)}^2&\leq K\|p_t-p^\eta_t\|_{\LL^2(w)}\|\nabla
p_t-\nabla p^\eta_t\|_{\LL^2(w)}+K\|p_t-p_t^\eta\|^2_{\LL^2(w)}\\
&\quad+\|\nabla p_t^\eta-\nabla
p_t\|_{\LL^2(w)}\left\|p_t\frac{p_t^{\partial_1V}}{p_t^1}
-p_t^\eta\frac{p_t^{\eta,\partial_1V}*\phieps}{p_t^{\eta,1}*\phieps}\right\|_{\LL^2(w)}.
\end{align*}
We now estimate the last term.
\begin{align*}
\left\|p_t\frac{p_t^{\partial_1V}}{p_t^1}-p_t^\eta\frac{p_t^{\eta,\partial_1V}*\phieps}
{p_t^{\eta,1}*\phieps}\right\|_{\LL^2(w)}&\leq\left\|\frac{p_t}{p_t^1}
\left(p^{\partial_1V}_t-\phieps*p_t^{\eta,\partial_1V}\right)\right\|_{\LL^2(w)}\\
&\quad+\left\|p_t~\phieps*p_t^{\eta,\partial_1V}\left(\frac1{p_t^1}-\frac1{\phieps*
      p_t^{\eta,1}}\right)\right\|_{\LL^2(w)}+\left\|(p_t-p_t^\eta)\frac{\phieps*p_t^{\eta,\partial_1V}}{\phieps*p_t^{\eta,1}}\right\|_{\LL^2(w)}\\
&\leq\left\|p_t\right\|_{\LL^2(w)}\left\|\frac1{p_t^1}\left(p^{\partial_1V}_t-\phieps*p_t^{\eta,\partial_1V}\right)\right\|_{\LL^\infty(\T)}\\
&\quad+\left\|p_t\right\|_{\LL^2(w)}\left\|\frac{\phieps*p_t^{\eta,\partial_1V}}{p_t^1(\phieps*p_t^{\eta,1})}\left(\phieps*
      p_t^{\eta,1}-p_t^1\right)\right\|_{\LL^\infty(\T)}+\left\|(p_t-p_t^\eta)\frac{\phieps*p_t^{\eta,\partial_1V}}{\phieps*p_t^{\eta,1}}\right\|_{\LL^2(w)}.\\
\end{align*}
From Lemma~\ref{lem:chaleur}, $p_t^1$ is bounded from below uniformly in
time. Using this together with the facts that~$\partial_1V$ is bounded and $p\in\LL^\infty((0,T),\LL^2(w))$,
one obtains
\begin{align*}
\left\|p_t\frac{p_t^{\partial_1V}}{p_t^1}-p_t^\eta\frac{p_t^{\eta,\partial_1V}*\phieps}
{p_t^{\eta,1}*\phieps}\right\|_{\LL^2(w)}&\leq
K\left(\|p_t^{\partial_1V}-\phieps*p_t^{\eta,\partial_1V}\|_{\LL^\infty(\T)}+
\|p_t^1-\phieps*p_t^{\eta,1}\|_{\LL^\infty(\T)}+\|p_t-p_t^\eta\|_{\LL^2(w)}\right).
\end{align*}
Consequently, we have to estimate
$\|p_t^1-\phieps*p_t^{\eta,1}\|_{\LL^\infty(\T)}$ and $\|p_t^{\partial_1V}-\phieps*p_t^{\eta,\partial_1V}\|_{\LL^\infty(\T)}.$
It holds, for $\gamma\in(0,1/2),$
\begin{align*}
\|p_t^{\partial_1V}-\phieps*p_t^{\eta,\partial_1V}\|_{\LL^\infty(\T)}
&\leq\|\phieps*(p_t^{\partial_1V}-p_t^{\eta,\partial_1V})\|_{\LL^\infty(\T)}+\|p_t^{\partial_1V}-\phieps*p_t^{\partial_1V}\|_{\LL^\infty(\T)}\\
&\leq K\alpha+\|p_t^{\partial_1V}-p_t^{\eta,\partial_1V}\|_{\LL^\infty(\T)}+\|p_t^{\partial_1V}-\phieps*p_t^{\partial_1V}\|_{\LL^\infty(\T)}\\
&\leq K\left(\alpha+\|p_t^{\partial_1V}-p_t^{\eta,\partial_1V}\|_{\HH^1(\T)}^{1/2+\gamma}\|p_t^{\partial_1V}-p_t^{\eta,\partial_1V}\|_{\LL^2(\T)}^{1/2-\gamma}\right)+\|p_t^{\partial_1V}-\phieps*p_t^{\partial_1V}\|_{\LL^\infty(\T)}\\
&\leq K\left(\alpha+\|p_t-p_t^\eta\|_{\HH^1(w)}^{1/2+\gamma}\|p_t-p_t^\eta\|_{\LL^2(\T)}^{1/2-\gamma}\right)+\|p_t^{\partial_1V}-\phieps*p_t^{\partial_1V}\|_{\LL^\infty(\T)}.
\end{align*}
Likewise, we have
$$\|p_t^1-\phieps*p_t^{\eta,1}\|_{\LL^\infty(\T)}\leq K\alpha+K\|p_t-p_t^\eta\|_{\HH^1(w)}^{1/2+\gamma}\|p_t-p_t^\eta\|_{\LL^2(\T)}^{1/2-\gamma}+\|p_t^1-\phieps*p_t^1\|_{\LL^\infty(\T)}.$$
To conclude, notice that, in view of Lemma \ref{lem:d1V_continu},
$p_t^{\partial_1V}$ lies in $\HH^1(\T)$. Thus $p_t^{\partial_1V}$ is
H\"older continuous with exponent $1/2$ and
constant~$C\|p_t^{\partial_1V}\|_{\HH^1(\T)}$ (see \cite[Corollaire
IX.13]{brezis-83}). Consequently, since $\psi_\varepsilon\equiv0$ outside~$[-\varepsilon,\varepsilon],$
\begin{align}\label{eq:maj_alpha_eps}
\left|p_t^{\partial_1V}(x)-\phieps*p_t^{\partial_1V}(x)\right|
&=\left|\alpha\int_\T p_t^{\partial_1V}(x)\dx x+\int_\T\psi_\varepsilon(y)\big(p_t^{\partial_1V}(x)-p_t^{\partial_1V}(x-y)\big)\dx
y\right|\nonumber\\
&\leq K\left(\alpha+\|p_t^{\partial_1V}\|_{\HH^1(\T)}\int_\T\psi_\varepsilon(y)\sqrt y\dx
y\right)\nonumber\\
&\leq K\left(\alpha+\|p_t^{\partial_1V}\|_{\HH^1(\T)}\sqrt\varepsilon\int_\T\psi_\varepsilon(y)\dx
y\right)\nonumber\\
&\leq K\left(\alpha+\sqrt\varepsilon\|p_t\|_{\HH^1(w)}\right).
\end{align}
The same inequality holds
for $p^1$
$$\left|p_t^1(x)-\phieps*p_t^1(x)\right|\leq K\left(\alpha+\sqrt\varepsilon\|p_t\|_{\HH^1(w)}\right).$$
Gathering all the previous inequalities, we obtain,
\begin{align*}
\frac12\partial_t\|p_t-p_t^\eta\|_{\LL^2(w)}^2+\|\nabla p_t-\nabla
p_t^\eta\|_{\LL^2(w)}^2&\leq K\|p_t-p_t^\eta\|_{\LL^2(w)}\|\nabla
p_t-\nabla p_t^\eta\|_{\LL^2(w)}+K\|p_t-p_t^\eta\|^2_{\LL^2(w)}\\
&\quad+K\|p_t-p_t^\eta\|_{\HH^1(w)}^{1/2+\gamma}\|p_t-p_t^\eta\|_{\LL^2(\T)}^{1/2-\gamma}\|\nabla
p_t-\nabla
p_t^\eta\|_{\LL^2(w)}\\
&\quad+K\left(\alpha+\sqrt\varepsilon\|p_t\|_{\HH^1(w)}\right)\|\nabla
p_t-\nabla
p_t^\eta\|_{\LL^2(w)}\\
&\quad+K\alpha\|\nabla
p_t-\nabla
p_t^\eta\|_{\LL^2(w)}.
\end{align*}
Consequently, from Young's inequality,
$$\partial_t\|p_t-p_t^\eta\|_{\LL^2(w)}^2+\|\nabla
p_t^\eta-\nabla p_t\|_{\LL^2(w)}^2\leq K\left(\|
p_t-p_t^\eta\|_{\LL^2(w)}^2+\alpha^2+\varepsilon\|p_t\|_{\HH^1(w)}^2\right).$$
Gr\"onwall's Lemma yields the first statement of Theorem \ref{th:vitesse}, noticing that $p$ lies in $\LL^2((0,T),\HH^1(w))$.

For the second statement, arguing as we did above, it holds that
\begin{align*}
\left\|\frac{\phieps*p_t^{\eta,\partial_1V}}{\phieps*p_t^{\eta,1}}-\frac{p_t^{\partial_1V}}{p_t^1}\right\|_{\LL^\infty(\T)}\leq&\quad
K\left(\left\|p^1_t-\phieps*p_t^{\eta,1}\right\|_{\LL^\infty(\T)}
+\left\|\phieps*p_t^{\eta,\partial_1V}-p_t^{\partial_1V}\right\|_{\LL^\infty(\T)}\right)\\
\leq&\quad K\left(\alpha+\sqrt\varepsilon\|p_t\|_{\HH^1(\T)}+\|p_t-p_t^\eta\|_{\HH^1(w)}\right).
\end{align*}
We finish the proof by squaring this inequality and then integrating.
\end{proof}

\section{An interacting particle system approximation}\label{IPS}
In this section, we prove the convergence of the interacting particle
system to the regularized nonlinear processes, and we estimate the
difference between the regularized biasing force
$\frac{\phieps*p_t^{\eta,\partial_1V}}{\phieps*p_t^{\eta,1}}$ and its
particle approximation.

\begin{theo}\label{th:conv_N}Let $T$ be a positive time. Under
  Assumptions \ref{hyp:V}, \ref{hyp:cond_init}, \ref{hyp:cond_init_part} and \ref{hyp:phi},
the solution $(X_{t,n,N}^\eta)_{N\geq1}$ of~\eqref{eq:EDS_N_eps} with initial
condition $X_{0,n,N}^\eta=X_{0,n}$ converges to the solution $\bar X_{t,n}^\eta$ to
\eqref{eq:EDS_eps} with initial condition $X_{0,n}$ in the following
sense: for all $1\leq n\leq N$, and for $\varepsilon, \alpha\leq1$, 
$$\E\left[\sup_{t\in[0,T]}\left|\bar
X_{t,n}^\eta-X_{t,n,N}^\eta\right|\right]<\frac1{\sqrt N}e^{\frac K{\alpha\varepsilon^2}},$$
$K$ being some constant
not depending on $\alpha$, $\varepsilon$ and $N$.

Moreover, one has
\begin{equation}\label{eq:theo}
\E\left[\sup_{t\in[0,T],x^1\in\T}\left|\frac{\phieps*p^{\eta,\partial_1V}}{\phieps*p^{\eta,1}}(x^1)
-\frac{\sum_{n=1}^N\phieps(x^1-X_{t,n,N}^{\eta,1})\partial_1V(X_{t,n,N}^\eta)}{\sum_{n=1}^N\phieps(x^1-X_{t,n,N}^{\eta,1})}\right|\right]
\leq\frac1{\sqrt N}e^{\frac
  K{\alpha\varepsilon^2}}.\end{equation}
\end{theo}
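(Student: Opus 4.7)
The plan is to follow Sznitman's coupling argument. On the probability space carrying the initial conditions $(X_{0,n})$ and Brownian motions $(W^n)$ of~\eqref{eq:EDS_N_eps}, let $\bar X_{t,n}^\eta$ denote the unique strong solution of~\eqref{eq:EDS_eps} driven by $W^n$ with initial condition $X_{0,n}$ (existence granted by Theorem~\ref{th:ex_un_pnl}). By uniqueness in law, the processes $(\bar X^\eta_{\cdot,n})_{n\geq 1}$ are i.i.d.\ with common law $P^\eta$. Denote $\mu_{s,N}^\eta = \frac1N\sum_m\delta_{X_{s,m,N}^\eta}$, $\bar\mu_{s,N}^\eta = \frac1N\sum_m\delta_{\bar X_{s,m}^\eta}$, and define the biasing-force functional $\Psi_\mu(x^1) = \frac{\int\phieps(x^1-y^1)\partial_1V(y)\,\mu(\dx y)}{\int\phieps(x^1-y^1)\,\mu(\dx y)}$, so that the drift of~\eqref{eq:EDS_N_eps} is $-\nabla V + \Psi_{\mu_{t,N}^\eta}(X_{t,n,N}^{\eta,1})e_1$ and the drift of~\eqref{eq:EDS_eps} is $-\nabla V + \Psi_{P_t^\eta}(\bar X_{t,n}^{\eta,1})e_1$.

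\textbf{Stability/statistical splitting and Grönwall for the first bound.} Subtracting the two SDEs and taking absolute values, the $-\nabla V$ contribution is Lipschitz, while the force difference is decomposed as
$$\Psi_{\mu_{s,N}^\eta}(X_{s,n,N}^{\eta,1})-\Psi_{P_s^\eta}(\bar X_{s,n}^{\eta,1}) = \underbrace{\bigl[\Psi_{\mu_{s,N}^\eta}(X_{s,n,N}^{\eta,1})-\Psi_{\bar\mu_{s,N}^\eta}(\bar X_{s,n}^{\eta,1})\bigr]}_{\text{stability}}+\underbrace{\bigl[\Psi_{\bar\mu_{s,N}^\eta}(\bar X_{s,n}^{\eta,1})-\Psi_{P_s^\eta}(\bar X_{s,n}^{\eta,1})\bigr]}_{\text{statistical}}.$$
From~\eqref{eq:tete_phi} the denominator of $\Psi_\mu$ is bounded below by $\alpha$, and by Assumption~\ref{hyp:phi} the numerator and denominator are Lipschitz in $(x^1,\mu)$ with constants of order $\|\psi_\varepsilon'\|_\infty\leq K/\varepsilon^2$; hence $(x^1,\mu)\mapsto\Psi_\mu(x^1)$ is Lipschitz in $x^1$ and in Wasserstein-$1$ distance in $\mu$ with constant $K/(\alpha\varepsilon^2)$. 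The stability term is thus bounded by $\frac{K}{\alpha\varepsilon^2}\bigl(|X_{s,n,N}^\eta-\bar X_{s,n}^\eta|+\frac1N\sum_m|X_{s,m,N}^\eta-\bar X_{s,m}^\eta|\bigr)$. Taking expectations and exploiting the exchangeability of the particles (so that $\E|X_{s,n,N}^\eta-\bar X_{s,n}^\eta|$ does not depend on $n$) eliminates the sum.

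\textbf{Statistical error by law of large numbers.} Conditionally on $\bar X_{s,n}^{\eta,1}$, the statistical term is the ratio error between the empirical measure of the i.i.d.\ family $(\bar X_{s,m}^\eta)$ and its law $P_s^\eta$. A direct variance computation yields, for each fixed $x^1\in\T$,
$$\E\biggl|\frac1N\sum_{m=1}^N\phieps(x^1-\bar X_{s,m}^{\eta,1})\partial_1V(\bar X_{s,m}^\eta)-\phieps*P_s^{\eta,\partial_1V}(x^1)\biggr|\leq\frac{K\|\phieps\|_\infty}{\sqrt N},$$
and similarly for the denominator; combining via the $1/\alpha$ lower bound gives a pointwise $\LL^1$-bound of order $K/(\alpha\varepsilon\sqrt N)$. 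Inserting these estimates into the integral inequality and applying Grönwall with Lipschitz constant $K/(\alpha\varepsilon^2)$ yields the first assertion with the desired $\frac{1}{\sqrt N}e^{K/(\alpha\varepsilon^2)}$ rate.

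\textbf{Uniform bound in $x^1$ and main obstacle.} For~\eqref{eq:theo} I decompose as above but with $x^1$ kept free: the stability piece is handled pointwise by the $K/(\alpha\varepsilon^2)$-Lipschitz estimate and the already proved bound on $\E\sup_t|\bar X_{t,n}^\eta-X_{t,n,N}^\eta|$; the statistical piece has pointwise $\LL^1$-norm $\leq K/(\alpha\varepsilon\sqrt N)$, and both $x^1\mapsto\Psi_{\bar\mu_{s,N}^\eta}(x^1)$ and $x^1\mapsto\Psi_{P_s^\eta}(x^1)$ are Lipschitz on $\T$ with constant $K/(\alpha\varepsilon^2)$. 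Covering $\T$ by a net of $M\sim e^{K/(\alpha\varepsilon^2)}$ points, a union bound on the net together with the Lipschitz control of the remainder absorbs everything into $\frac1{\sqrt N}e^{K/(\alpha\varepsilon^2)}$; the supremum in $t$ is reduced to a countable dense set by continuity of trajectories. The main obstacle throughout is the Lipschitz constant $K/(\alpha\varepsilon^2)$ of the biasing-force functional: Grönwall inevitably turns this into the exponential factor in the final rate, and it is the need for the denominator lower bound $\alpha$ in~\eqref{eq:tete_phi} that forces this explicit trade-off between $\alpha$, $\varepsilon$ and $N$.
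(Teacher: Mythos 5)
Your proof of the first bound follows the same route as the paper: synchronous coupling with the same Brownian motions and same initial condition, decomposition of the drift mismatch into a ``stability'' piece controlled by the Lipschitz constant $K/(\alpha\varepsilon^2)$ of the regularized force functional, a ``statistical'' piece which is an empirical fluctuation of i.i.d.\ copies of $\bar X^\eta$ around $P^\eta$, and then Gr\"onwall, exactly as in the paper's Lemma~\ref{lem:majoration} and its exploitation (including the use of exchangeability and the variance bound $\E[(A_t^{n,N})^2]\leq K/(\alpha^2\varepsilon^2 N)$). That part is fine.

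For~\eqref{eq:theo}, however, the net argument you sketch does not deliver the stated rate. Write $Z_t(x^1)=\Psi_{\bar\mu^\eta_{t,N}}(x^1)-\Psi_{P^\eta_t}(x^1)$; you have $\E|Z_t(x^1)|\leq K/(\alpha\varepsilon\sqrt N)$ pointwise and a Lipschitz constant $L=K/(\alpha\varepsilon^2)$ in $x^1$. Covering $\T$ with $M$ points and using a crude union bound gives
\[
\E\big[\sup_{x^1}|Z_t(x^1)|\big]\;\leq\; M\cdot\frac{K}{\alpha\varepsilon\sqrt N}\;+\;\frac{L}{M}.
\]
If $M\sim e^{K/(\alpha\varepsilon^2)}$ as you propose, the remainder $L/M$ is a constant that does \emph{not} decay in $N$, so the bound is not $\mathcal O(N^{-1/2})$ for fixed $(\alpha,\varepsilon)$; if instead one lets $M$ grow with $N$, the union-bound factor $M$ destroys the $1/\sqrt N$ rate (optimizing gives at best $N^{-1/4}$ with $\LL^1$ bounds, $N^{-1/3}$ with $\LL^2$). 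The correct way to pass from pointwise to uniform-in-$x^1$ control while keeping the $N^{-1/2}$ rate is a chaining/Dudley entropy integral or a kernel-density sup-norm inequality, which would produce at most a $\sqrt{\log(1/\varepsilon)}$ overhead, still absorbed into $e^{K/(\alpha\varepsilon^2)}$. (Similarly, the ``reduce $\sup_t$ to a dense countable set'' remark only handles measurability, not the estimate; the expectation of a supremum over $t$ of the empirical fluctuation requires a maximal inequality.) To be fair, the paper's own proof of~\eqref{eq:theo} also glosses over both suprema: the intermediate display only carries $\sup_t$ for a fixed $x^1$, and the final step invokes only the pointwise variance $K/\varepsilon^2$. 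So your attempt to address the $x^1$-uniformity is in the right spirit, but the union-bound net is not strong enough to conclude, and you should either invoke an empirical-process maximal inequality or downgrade the conclusion to $\sup_{t}\E[\sup_{x^1}|\cdot|]$, which is what is actually needed for Theorem~\ref{th:approx_particule}.
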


Notice that the the right hand side of \eqref{eq:theo} explodes
when $\varepsilon$ goes to $0$ for a fixed value of $N,$ so that the size of
$\varepsilon$ has to be chosen carefully depending on the value of $N.$
We will also investigate this point numerically in the next section.

To simplify notation, we omit the subscript $N$ and the superscript $\eta$. We first establish the following inequality:

\begin{lem}\label{lem:majoration}We have, for $\varepsilon,\alpha<1$,
  and for any $t$ in $(0,T],$
$$\left|X_{t,n}-\bar X_{t,n}\right|\leq\frac
K{\alpha\varepsilon^2}\int_0^t\left(\left|X_{s,n}-\bar X_{s,n}\right|
+\frac1N\sum_{m=1}^N\left|X_{s,m}-\bar
  X_{s,m}\right|\right)\dx s+K\int_0^tA_s^{n,N}\dx s,$$
where $K$ does not depend on $\alpha$, $\varepsilon$ and $t$, and $A_t^{n,N}$ is defined by
\begin{align*}
A_t^{n,N}&=\frac 1\alpha\left(\left|\frac1N\sum_{m=1}^N\phieps(\bar
X^1_{s,n}-\bar X_{s,m}^1)\partial_1V(\bar
X_{s,m})-\phieps*p^{\eta,\partial_1V}_s(\bar
X^1_{s,n})\right|\right.\\
&\quad+\left.\left|\frac1N\sum_{m=1}^N\phieps(\bar
X^1_{s,n}-\bar X_{s,m}^1)-\phieps*p_s^{\eta,1}(\bar
X^1_{s,n})\right|\right).
\end{align*}
\end{lem}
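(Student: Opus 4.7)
Since $X_{t,n}$ and $\bar X_{t,n}$ share the same Brownian motion $W^n$ and the same initial condition, the stochastic integrals cancel upon subtracting \eqref{eq:EDS_N_eps} and \eqref{eq:EDS_eps}. What remains is a pathwise inequality in which the $-\nabla V$ terms contribute $K|X_{s,n}-\bar X_{s,n}|$ by the Lipschitz property (from Assumption~\ref{hyp:V}), and the drift-correction terms must be compared. The plan is to bound the difference of the two Nadaraya--Watson-type drifts by inserting an intermediate object and splitting in three.

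Concretely, for fixed $s$ write $F_N(x^1)=\bigl(\sum_m\phieps(x^1-X^1_{s,m})\partial_1V(X_{s,m})\bigr)/\bigl(\sum_m\phieps(x^1-X^1_{s,m})\bigr)$ and $\bar F_N(x^1)$ for the same expression with every $X_{s,m}$ replaced by $\bar X_{s,m}$; write $\bar F(x^1)=(\phieps*p_s^{\eta,\partial_1V})/(\phieps*p_s^{\eta,1})(x^1)$. The triangle inequality gives
\[
\bigl|F_N(X^1_{s,n})-\bar F(\bar X^1_{s,n})\bigr|\le\underbrace{\bigl|F_N(X^1_{s,n})-\bar F_N(X^1_{s,n})\bigr|}_{\text{(I)}}+\underbrace{\bigl|\bar F_N(X^1_{s,n})-\bar F_N(\bar X^1_{s,n})\bigr|}_{\text{(II)}}+\underbrace{\bigl|\bar F_N(\bar X^1_{s,n})-\bar F(\bar X^1_{s,n})\bigr|}_{\text{(III)}}.
\]
The key structural input, used everywhere, is that $\phieps\ge\alpha$ by \eqref{eq:tete_phi}, and hence every denominator is bounded below by $\alpha$ (or $N\alpha$, for the unnormalized sums). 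I will also repeatedly use $\|\phieps\|_\infty\le K/\varepsilon$ and $\|\phieps'\|_\infty\le K/\varepsilon^2$ from Assumption~\ref{hyp:phi}, together with the fact that the numerator in any of these ratios is dominated by $\|\partial_1V\|_\infty$ times its denominator.

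For (I), writing the difference of the two ratios with identical query point $X^1_{s,n}$ in the standard form $A_1/B_1-A_2/B_2=A_1(B_2-B_1)/(B_1B_2)+(A_1-A_2)/B_2$ and estimating $|A_1-A_2|$ and $|B_1-B_2|$ term-by-term via the Lipschitz bounds on $\phieps$ and $\partial_1V$ produces the factor $K/\varepsilon^2\cdot\sum_m|X_{s,m}-\bar X_{s,m}|$, which divided by $N\alpha$ (the lower bound on $B_1,B_2$) yields $(K/(\alpha\varepsilon^2))\cdot N^{-1}\sum_m|X_{s,m}-\bar X_{s,m}|$. For (II), differentiating $\bar F_N(x^1)=N(x^1)/D(x^1)$ in $x^1$ gives $|\bar F_N'|\le|N'|/D+(|N|/D)\cdot|D'|/D\le K/(\alpha\varepsilon^2)$ thanks to $|N|/D\le\|\partial_1V\|_\infty$, $|D'|\le K/\varepsilon^2$, and $D\ge\alpha$; hence (II) $\le(K/(\alpha\varepsilon^2))|X_{s,n}-\bar X_{s,n}|$. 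For (III), writing the difference of the two ratios in the same algebraic form and using once more $|\text{numerator}|\le\|\partial_1V\|_\infty\cdot|\text{denominator}|\ge\alpha$ shows that (III) is bounded by $KA_s^{n,N}$, which is precisely why $A_s^{n,N}$ was defined with the $1/\alpha$ prefactor.

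Summing (I)--(III), integrating in $s\in[0,t]$, and absorbing the tame Lipschitz contribution of $\nabla V$ into $K/(\alpha\varepsilon^2)$ (valid for $\alpha,\varepsilon\le 1$) gives exactly the stated inequality. The main subtlety is bookkeeping across the three-term split: one must arrange the estimates so that the empirical-process error (III) is exactly $A_s^{n,N}$ rather than something slightly larger, which is why (I) and (II) are organized so as to avoid reintroducing $\phieps*p^{\eta,\cdot}$ terms and to keep all query-point changes on $\bar F_N$ (where the derivative bound $K/(\alpha\varepsilon^2)$ is clean).
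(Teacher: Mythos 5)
Your proof is correct and follows essentially the same route as the paper: subtract the two SDEs (Brownian parts cancel), use the Lipschitz property of $\nabla V$ from Assumption~\ref{hyp:V}, and split the difference of Nadaraya--Watson drifts with $\phieps\ge\alpha$ supplying the denominator lower bound and Assumption~\ref{hyp:phi} giving the $K/\varepsilon^2$ Lipschitz control. The only cosmetic difference is that you refine the paper's two-term split (change all $X$'s to $\bar X$'s at once, then compare with the mean-field limit) into three terms by treating the query point change separately via a derivative bound on $\bar F_N$; this is a harmless reorganization that recombines into the same $\frac{K}{\alpha\varepsilon^2}(\,|X_{s,n}-\bar X_{s,n}|+N^{-1}\sum_m|X_{s,m}-\bar X_{s,m}|\,)$ bound.
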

\begin{proof}
From the definition of $X_{t,n}$ and $\bar X_{t,n}$, we have
\begin{align*}
\left|X_{t,n}-\bar X_{t,n}\right|&\leq\left|\int_0^t\left(\nabla V(X_{s,n})-\nabla V(\bar X_{s,n})\right)\dx
s\right|\\
&\quad+\left|\int_0^t\frac{\sum_{m=1}^N\phieps(X^1_{s,n}-X^1_{s,m})\partial_1V(X_{s,m})}{\sum_{m=1}^N\phieps(X^1_{s,n}-X^1_{s,m})}\dx
s-\int_0^t\frac{\phieps*p_s^{\eta,\partial_1V}(\bar X^1_{s,n})}{\phieps*p_s^{\eta,1}(\bar X^1_{s,n})}\dx s\right|.
\end{align*}
First, $\left|\int_0^t\left(\nabla V(X_{s,n})-\nabla
  V(\bar X_{s,n})\right)\dx
s\right|$ is bounded from above by $K\int_0^t|X_{s,n}-\bar X_{s,n}|\dx s$, since $\nabla V$
is Lipschitz continuous.
Now, we decompose
\begin{align}\label{eq:majoration}
&\left|\frac{\sum_{m=1}^N\phieps(X^1_{s,n}-X^1_{s,m})\partial_1V(X_{s,m})}
{\sum_{m=1}^N\phieps(X^1_{s,n}-X^1_{s,m})}-\frac{\phieps*p_s^{\eta,\partial_1V}(\bar
      X^1_{s,n})}{\phieps*p_s^{\eta,1}(\bar X^1_{s,n})}\right|\nonumber\\
&\leq\left|\frac{\sum_{m=1}^N\phieps(X^1_{s,n}-X^1_{s,m})\partial_1V(X_{s,m})}
{\sum_{m=1}^N\phieps(X^1_{s,n}-X^1_{s,m})}-\frac{\sum_{m=1}^N\phieps(\bar
    X^1_{s,n}-\bar X^1_{s,m})\partial_1V(\bar
    X_{s,m})}{\sum_{m=1}^N\phieps(\bar X^1_{s,n}-\bar X^1_{s,m})}\right|\nonumber\\
&\quad+\left|\frac{\sum_{m=1}^N\phieps(\bar
    X^1_{s,n}-\bar X^1_{s,m})\partial_1V(\bar
    X_{s,m})}{\sum_{m=1}^N\phieps(\bar
    X^1_{s,n}-\bar X^1_{s,m})}-\frac{\phieps*p_s^{\eta,\partial_1V}(\bar
    X^1_{s,n})}{\phieps*p_s^{\eta,1}(\bar X^1_{s,n})}\right|.
\end{align}

Using Assumptions \ref{hyp:V} and \ref{hyp:phi}, the first term in the
right hand-side of \eqref{eq:majoration} can be bounded by
$\frac K{\alpha\varepsilon^2}\left(\left|X_{s,n}-\bar
    X_{s,n}\right|+\frac1N\sum_{m=1}^N|X_{s,m}-\bar X_{s,m}|\right),$ and
the second term in the right hand side of
\eqref{eq:majoration} can be bounded by $KA_t^{n,N}$.
\end{proof}
\begin{proof}[Proof of Theorem \ref{th:conv_N}]
As a consequence of Lemma~\ref{lem:majoration}, we get, for
$\alpha,\varepsilon\leq1$ ,
$$\sup_{t\in[0,T]}|X_{t,n}-\bar
  X_{t,n}|\leq\frac
  K{\alpha\varepsilon^2}\int_0^T\left(\sup_{s\in[0,t]}\left|X_{s,n}-\bar
      X_{s,n}\right|+\frac1N\sum_{m=1}^N\sup_{s\in[0,t]}|X_{s,m}-\bar
    X_{s,m}|\right)\dx t+K\int_0^TA_t^{n,N}\dx t.$$
Taking the expectation, and using the
exchangeability of the couples $(X_n,\bar X_n)_{1\leq n\leq N}$, we get
$$\E\left[\sup_{t\in[0,T]}|X_{t,n}-\bar X_{t,n}|\right]\leq \frac
K{\alpha\varepsilon^2}
\int_0^T\E\left[\sup_{s\in[0,t]}|\bar X_{s,n}-X_{s,n}|\right]\dx t+K\int_0^T\E\left[A_t^{n,N}\right]\dx t.$$
By Gr\"onwall's lemma, one obtains
$$\E\left[\sup_{t\in[0,T]}|X_{t,n}-\bar X_{t,n}|\right]\leq Ke^{\frac K{\alpha\varepsilon^2}T}\int_0^T\E\left[A_t^{n,N}\right]\dx t.$$
To conclude, we estimate $\int_0^T\E[A_t^{n,N}]\dx t$. Let $$\Phi_t^m=\phieps(\bar
      X^1_{t,1}-\bar X^1_{t,m})\partial_1V(\bar
      X_{t,m})-\phieps*p^{\eta,\partial_1V}_t(\bar X^1_{t,1})$$ and $$\Psi_t^m=\phieps(\bar
     X^1_{t,1}-\bar X^1_{t,m})-\phieps*p^{\eta,1}_t(\bar X^1_{t,1}).$$
We have, for $t\leq T$, using again the exchangeability of the couples $(X_n,\bar
X_n)_{1\leq n\leq N}$,
\begin{align*}
\left[\E A_t^{n,N}\right]^2&\leq\E\left[(A_t^{n,N})^2\right]\\
&\leq\frac
K{\alpha^2}\left(\E\left[\left(\frac1N\sum_{m=1}^N\Phi_t^m\right)^2\right]
+\E\left[\left(\frac1N\sum_{m=1}^N\Psi_t^m\right)^2\right]\right)\\
&=\frac
K{N^2\alpha^2}\sum_{m,m'}\left(\E\left[\Phi_t^m\Phi_t^{m'}\right]
+\E\left[\Psi_t^m\Psi_t^{m'}\right]\right).
\end{align*}
But the $\Phi_t^m$ and $\Psi_t^m$ are centered for $m\geq2$, and, for $m\neq m'$,
$\Phi_t^m$ and $\Phi_t^{m'}$, (as well as $\Psi_t^m$ and~$\Psi_t^{m'}$) are
independent conditionally on $\bar X_{t,1}$. Therefore the double products
vanish, and, by exchangeability
$$\left[\E A_t^{n,N}\right]^2\leq\frac{
K(N-1)}{\alpha^2N^2}\left(\E\left[(\Phi_t^2)^2\right]+\E\left[(\Psi_t^2)^2\right]\right)+\frac
K{\alpha^2N^2}(\E\left[(\Phi_t^1)^2\right]+\E\left[(\Psi_t^1)^2\right]).$$
But one has $\E\left[(\Phi_t^2)^2\right]+\E\left[(\Psi_t^2)^2\right]\leq
K\varepsilon^{-2}$ and
$\E\left[(\Phi_t^1)^2\right]+\E\left[(\Psi_t^1)^2\right]\leq K\varepsilon^{-2},$
and the first assertion in Theorem \ref{th:conv_N}
follows.

For the estimation of the force, adapting  the proof of Lemma
\ref{lem:majoration}, we see that
\begin{align*}
&\E\left[\sup_{t\in[0,T]}\left|\frac{\phieps*p_t^{\eta,\partial_1V}}
{\phieps*p_t^{\eta,1}}(x^1)-\frac{\sum_{n=1}^N\phieps(x^1-X_{t,n,N}^1)
\partial_1V(X_{t,n,N})}{\sum_{n=1}^N\phieps(x^1-X_{t,n,N}^1)}\right|\right]\\\leq&
\frac1\alpha\E\left[\sup_{t\in[0,T]}\left|\frac1N\sum_{n=1}^N
\phieps(x^1-\bar X_{t,n}^1)\partial_1V(\bar X_{t,n})-\phieps*p_t^{\eta,\partial_1V}(x^1)\right|\right]\\
&\quad+\frac1\alpha\E\left[\sup_{t\in[0,T]}\left|\frac1N\sum_{n=1}^N\phieps(x^1-\bar X_{t,n}^1)-\phieps*p_t^{\eta,1}(x^1)\right|\right]+\frac K{\alpha\varepsilon^2N}\E\left[\sup_{t\in[0,T]}\sum_{n=1}^N\left|X_{t,n}-\bar
X_{t,n}\right|\right]\\
\leq&\frac1{\sqrt N}e^{\frac
  K{\alpha\varepsilon^2}}.
\end{align*}
Indeed, $(\phieps(x^1-\bar X_{t,n}^1)\partial_1V(\bar
X_{t,n})-\phieps*p_t^{\eta,\partial_1V}(x^1))_{n\in\{1\hdots N\}}$, as
well as $(\phieps(x^1-\bar
X_{t,n}^1)-\phieps*p_t^{\eta,1}(x^1))_{n\in\{1\hdots N\}}$, are i.i.d. centered random variables whose variance is
bounded by $\frac K{\varepsilon^2}$, uniformly in time.
\end{proof}

\section{Numerical results}\label{numerique}

In this section we give some numerical simulations to illustrate our
previous results. Here, the parameter $\alpha$, which was introduced to
enable theoretical estimations, is taken to be $0$.

Notice that the simulation used here is not exactly the one actually
used in applications. In particular, in those simulations time averages
are used in order to smooth  the problem : first the equation on
$A_t$ given in \eqref{eq:EDS_ABF} can be replaced by $$\partial_tA_t'(z)=\frac1\tau\Large(\E[F(X_t)|\xi(X_t)=z]-A_t\Large)$$ which
makes $A_t$ vary more smoothly. Second, one can use, in addition of the
particle approximation, an ergodic average for the computation of the
conditional expectation in \eqref{eq:EDS}.

In order to accelerate the convergence, one can also use a selection mechanism that gives more weight to
particles located in less explored areas (see \cite{lelievre-rousset-stoltz-07}).

\subsection{Efficiency of the ABF method}

Let us introduce a low dimensional example to illustrate the efficiency of
the ABF method and its particle approximation.

In this first example, we simulated the particle approximation with $1000$
particles, in the potential defined for $(x,y)$ in $[-2,2]\times\R$ by
\begin{equation}\label{eq:defpot1}
V_1(x,y)=5e^{-x^2-y^2}-5e^{-(x-1)^2-y^2}-5e^{-(x+1)^2-y^2}+0.2x^4+0.2y^4,
\end{equation} and extended periodically in the $x$ direction with period $4$.
The level sets of $V_1$ are depicted on Figure \ref{fig:exemple_part}.

On Figure \ref{fig:exemple_part}, we also plotted the position of the
particles after $2000$ iteration of an Euler-Maruyama approximation of
Equation \eqref{eq:EDS_N_eps} with a time step of $0.01$. The value of the
parameters are $\varepsilon=0.01$, $\beta=10$ and $N=1000$. On Figure
\ref{fig:exemple_force}, we plotted the graph of the mean force
(computed by numerical integration, which is still possible
due to the low dimensionnality), and the value of the approximate mean
force computed on a regular grid. The $\LL^1-$distance between the two
functions is $6.93\times10^{-2}$, while the $\LL^1-$norm of the function
$A'$ is 12.9.

Notice that without biasing force, one obtains a very poor sampling,
since the particles do not escape from the well they started in: see Figure
\ref{fig:sans_ABF}, where we plotted $200$ independent simulations of a
Langevin dynamics \eqref{eq:langevin} using $2000$ iterations of an
Euler-Maruyama scheme of time step $0.01$.

\begin{figure}[htbp]
\centerline{\epsfig{file=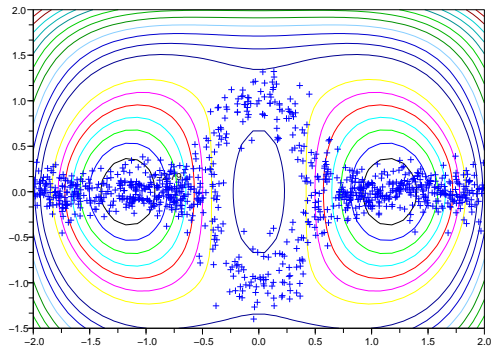,width=80mm,angle=0}}
\caption{Contour plot of the potential $V_1$ with the positions of
  $1000$ particles at time $t=20$.}\label{fig:exemple_part}
\end{figure}

\begin{figure}[htbp]
\centerline{\epsfig{file=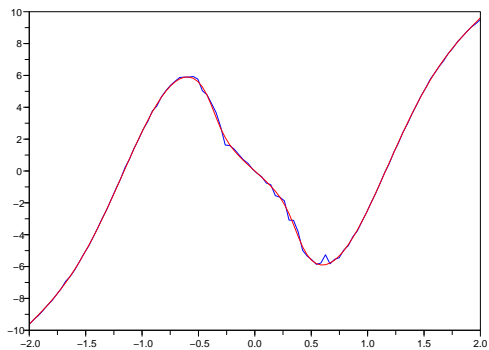,width=80mm,angle=0}}
\caption{Particle approximation of the mean force. The smooth curve is
  the actual value of the mean force, the rough one is the approximation.}\label{fig:exemple_force}
\end{figure}

\begin{figure}[htbp]
\centerline{\epsfig{file=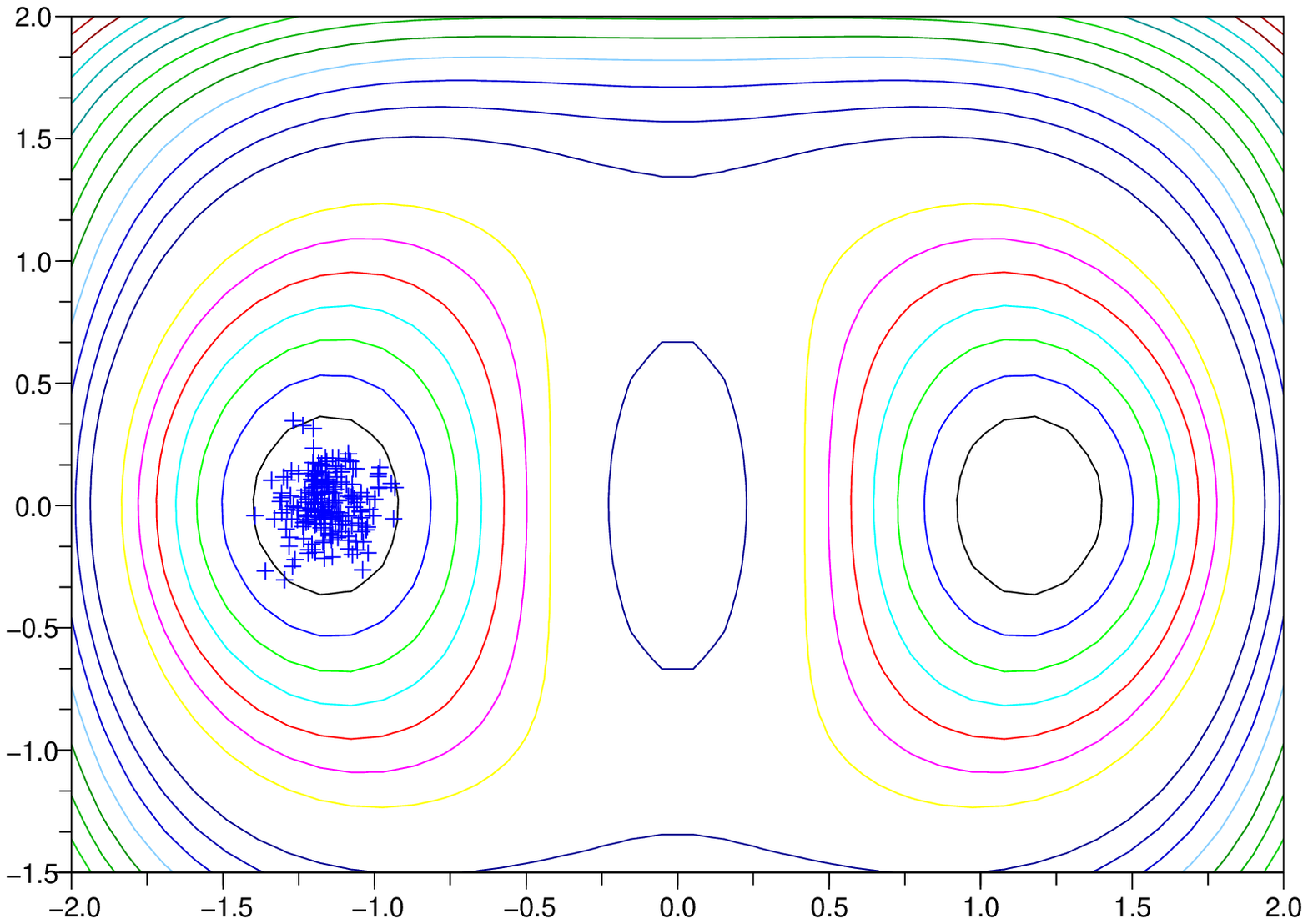,width=80mm,angle=0}}
\caption{$200$ independent realizations of a Langevin dynamics at time $t=20$.}\label{fig:sans_ABF}
\end{figure}

On Figure \ref{fig:decrN} we show the $\LL^1$ distance between the
actual value of the mean force $A'$ and its approximation at time $20$,
obtained for one simulation of the system,
as a function of the number of particles used in the simulation. Using a
least square regression, we find that the slope of the curve is approximatively
$-0.59$, which matches with the theoretical rate of $N^{-1/2}$.

\begin{figure}[htbp]
\centerline{\epsfig{file=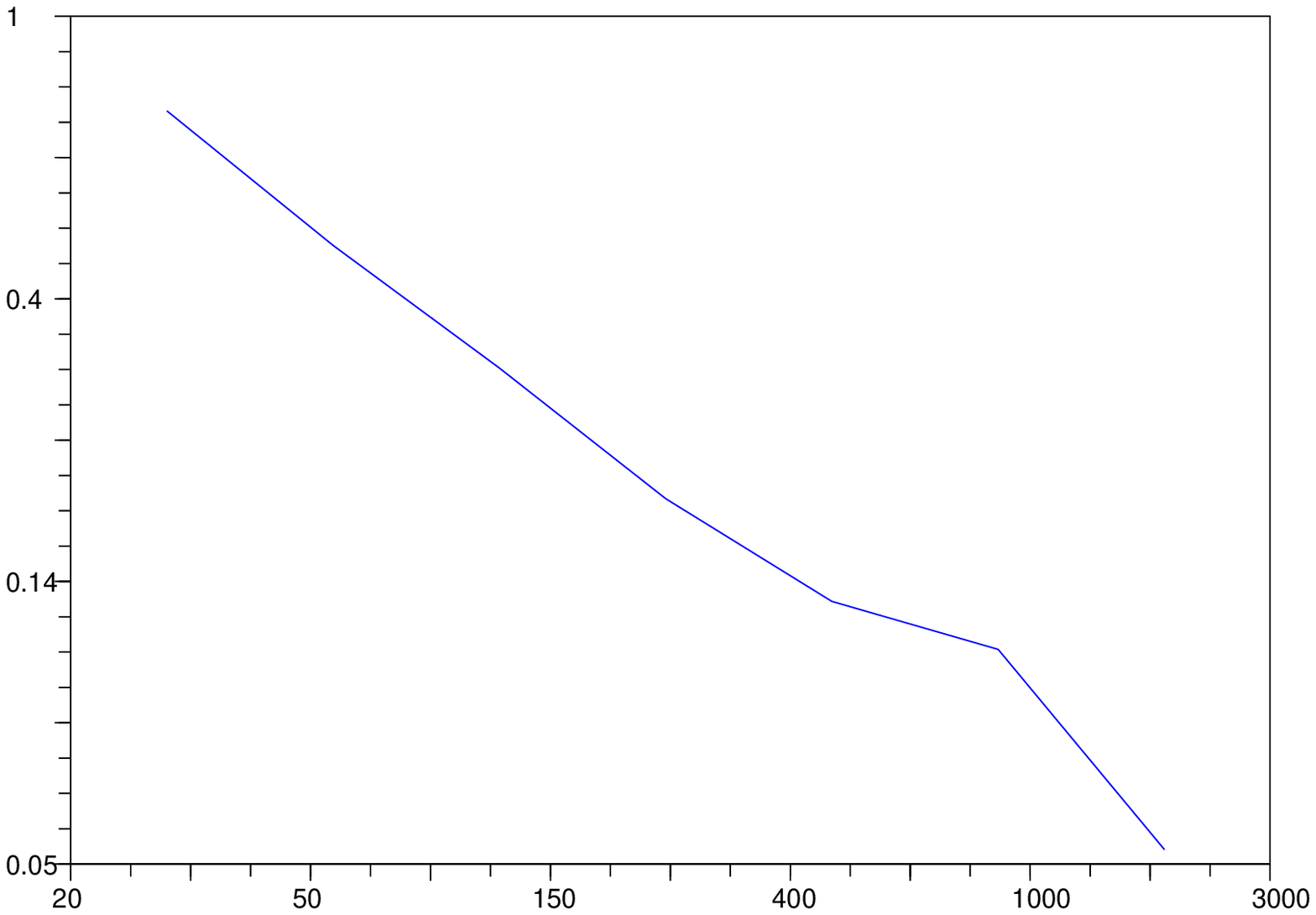,width=80mm,angle=0}}
\caption{Error as a function of $N$ (logarithmic scale).}\label{fig:decrN}
\end{figure}

\subsection{Tuning of the parameters}
In Theorem \ref{th:conv_N}, we showed that the particle approximation
converges as $\varepsilon$ goes to $0$ and $N$ goes to infinity, provided
that $\varepsilon$ does not go to zero too fast compared to $N$. The
practical difficulty that one encounters to apply this result is to choose
a good scaling for $\varepsilon$ in term of $N$.

On Figure \ref{fig:erreps}, we can see the $\LL^1$ error between the
mean force and its aproximation at time $20,$ as a function of the
parameter $\varepsilon,$ using $N=1500$ particles.

\begin{figure}[htbp]
\centerline{\epsfig{file=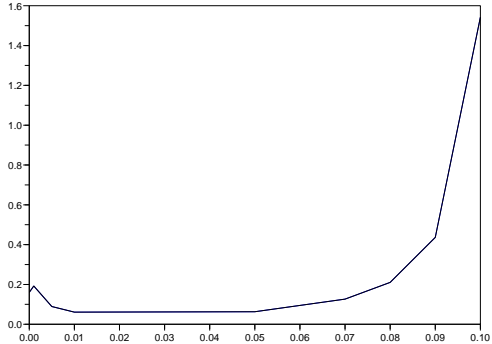,width=80mm,angle=0}}
\caption{Error as a function of $\varepsilon$.}\label{fig:erreps}
\end{figure}

Actually, for a fixed value of $N$, there is only a small range of values
for $\varepsilon$ for which the error is small. 

First, the limit of the
error as $\varepsilon$ goes to $0$ does not even vanish as $N$ tends to
infinity. The reason is that, since the particles interact with each
other in a range of $\varepsilon,$ the number of particles which
interact with a given particle is of order $\varepsilon N$. Hence, when
$\varepsilon$ tends to $0$ while $N$ is fixed, the particles cannot see
each other. Therefore, the natural limit of the particle system in the limit
$\varepsilon\rightarrow0$, $N$ fixed, should be a system of independent
particles following the dynamics $$\dx X_t=\left(-\nabla V(X_t)+e_1\partial_1V(X_t)\right)\dx
t+\sqrt{2\beta^{-1}}\dx W_t.$$ Unfortunately,
in the general case, the drift in the above
dynamics is not obtained as the gradient of a potential, so that no invariant measure for $X_t$ is known. This would
consequently induce a non vanishing bias in the estimation of $A$.

For example, for the potential $V(x,y)=\frac12(y-\sin(2\pi x))^2$, one can prove that
the dynamics obtained by canceling the force on the reaction coordinate
$x$, namely the couple $(\{X_t\},Y_t)$ defined by the dynamics
\begin{align*}\begin{cases}
\dx X_t&=\sqrt2\dx W_t^1,\\
\dx Y_t&=\left(-Y_t+\sin(2\pi X_t)\right)\dx t+\sqrt 2\dx W_t^2
\end{cases}\end{align*}
converges in law to the couple
$\left(\xi,\int_0^\infty e^{-s}\sin(2\pi(\xi+\sqrt2W_s))\dx s+G\right)$,
where $W$ is a standard Brownian motion, $\xi$ is uniformly distributed on $\T$, and $G$ is a standard
normal random variable, independent of $W$. This is not the correct
limit distribution, since the law of $Y$ conditionned to the value of
$\{X\}$ should be Gaussian, which is not the case here.

For a large value of $\varepsilon,$ the behavior of the particle system
can be really different from the expected behavior of the dynamic
\eqref{eq:EDS_ABF}. In the following example, the particles, instead of
freely visiting the $x$ axis, keep stuck in the local minima they
started in. Indeed, the large value of $\varepsilon$ made that the
biasing term is close to the mean of $\partial_1V(X^i)$ on all
particles, whose value is close to $0$. Consequently, the biasing force
is not large enough to prevent the particle from being trapped in the
local minima.

In the following example we considered the potential $V_1$ defined in
\eqref{eq:defpot1}, took $\varepsilon=1$, and simulated $200$
particles during $2000$ iterations of time step $0.01$. The result can
be seen on Figure~\ref{fig:eps_grand}.

\begin{figure}[htbp]
\centerline{\epsfig{file=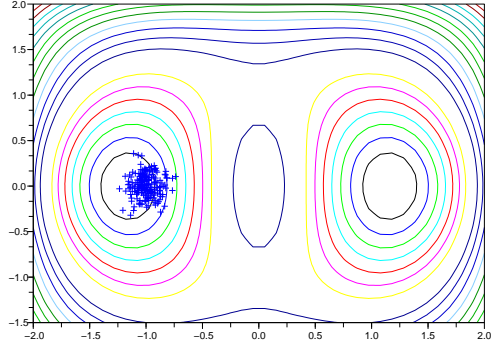,width=80mm,angle=0}}
\caption{Bad sampling due to a too large value of $\varepsilon$.}\label{fig:eps_grand}
\end{figure}

One way to increase the sample size while keeping the number $N$ of
particles fixed is to include time averages for the estimation of the
conditional expectation. This is actually the common practice in the
applied community (see \cite{henin-chipot-04,chipot-pohorille-07}).

\subsection{Discussion on the choice of the reaction coordinate}
We now give another example to illustrate the limitations of the ABF method. 
We consider the 4-periodical potential (in the $x$-direction) defined for $(x,y)$ in $[-2,2]\times\R$ by
\begin{equation}\label{eq:defpot2}
V_2(x,y)=3e^{-x^2-(y-1/3)^2}-3e^{-x^2-(y-5/3)^2}-5e^{-(x-1)^2-y^2}-5e^{-(x+1)^2-y^2}+0.2x^4+0.2(y-1/3)^4,
\end{equation}
whose level sets are depicted on Figure
\ref{fig:contre_exemple_part}. This potential has been introduced in \cite{metzner-schutte-vandeneijnden-06}.

The potential $V_2$ displays two deep minima approximately located at
$(\pm1,0)$. There is a maximum located at $(0,0.5)$, so that there are
two possible paths between the main minima. The first one is a direct
path meeting a saddle point approximately at $(0,-0.3)$. The other path
 goes through two saddle points at $(\pm0.5,1)$ and a small minima at
$(0,1.5)$. Even if the first path is more direct than the second one,
the prefered path in low temperature regimes will be the second one, since its energy barrier is
smaller.

We simulated the particle approximation of the ABF method with $N=1000$ particles, window width
$\varepsilon=0.01$, after $2000$ iterations of an Euler-Maruyama scheme
of time step $0.01$, and plotted the positions of the particles on
Figure \ref{fig:contre_exemple_part}.

\begin{figure}[htbp]
\centerline{\epsfig{file=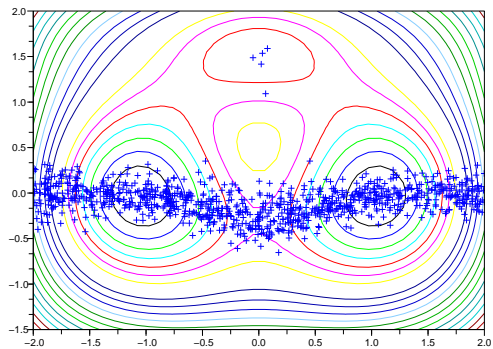,width=80mm,angle=0}}
\caption{Poor sampling due to a bad choice of the
  reaction coordinate.}\label{fig:contre_exemple_part}
\end{figure}

\begin{figure}[htbp]
\centerline{\epsfig{file=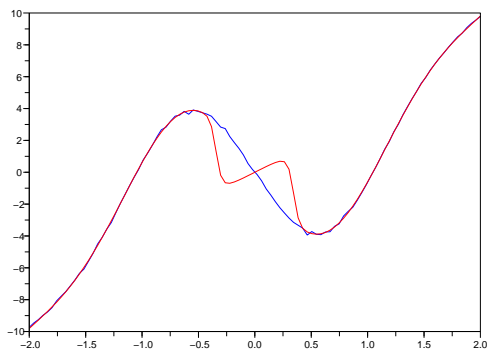,width=80mm,angle=0}}
\caption{Biased evaluation of the biasing force due to a bad choice of
  the reaction coordinate. The smooth curve is the value of the mean
  force. The rough curve is the approximation. Here, the approximation
  does not see the variations of the mean force around $0$.}\label{fig:contre_exemple_force}
\end{figure}

At the low temperature $\beta=10,$ the particles are expected to hop
from one well to the other
mainly through the upper channel, which is not the case here. This is
due to a bad choice of the reaction coordinate. Indeed, the biasing
force only acts in the $x$ direction, so that a particle trapped in the
left side well will naturally escape through a horizontal path, and will
take the lower channel. As a result,
the computation of the force is clearly biased, because of the poor
sampling of the upper channel, see Figure
\ref{fig:contre_exemple_force}, the $\LL^1-$distance between the two
functions is of 0.4.

We still have convergence to the correct mean force, but at a slow rate,
since the reaction coordinate has not been chosen in an optimal
way. Indeed, with the same parameters, but after $2.10^6$ iterations,
the result is much better, see Figures \ref{fig:exemple2_part} and
\ref{fig:exemple2_force}. The $\LL^1-$distance between the mean force
and its approximation is of 0.15, while the function $A'$ has
$\LL^1-$norm 10.9.

\begin{figure}[htbp]
\centerline{\epsfig{file=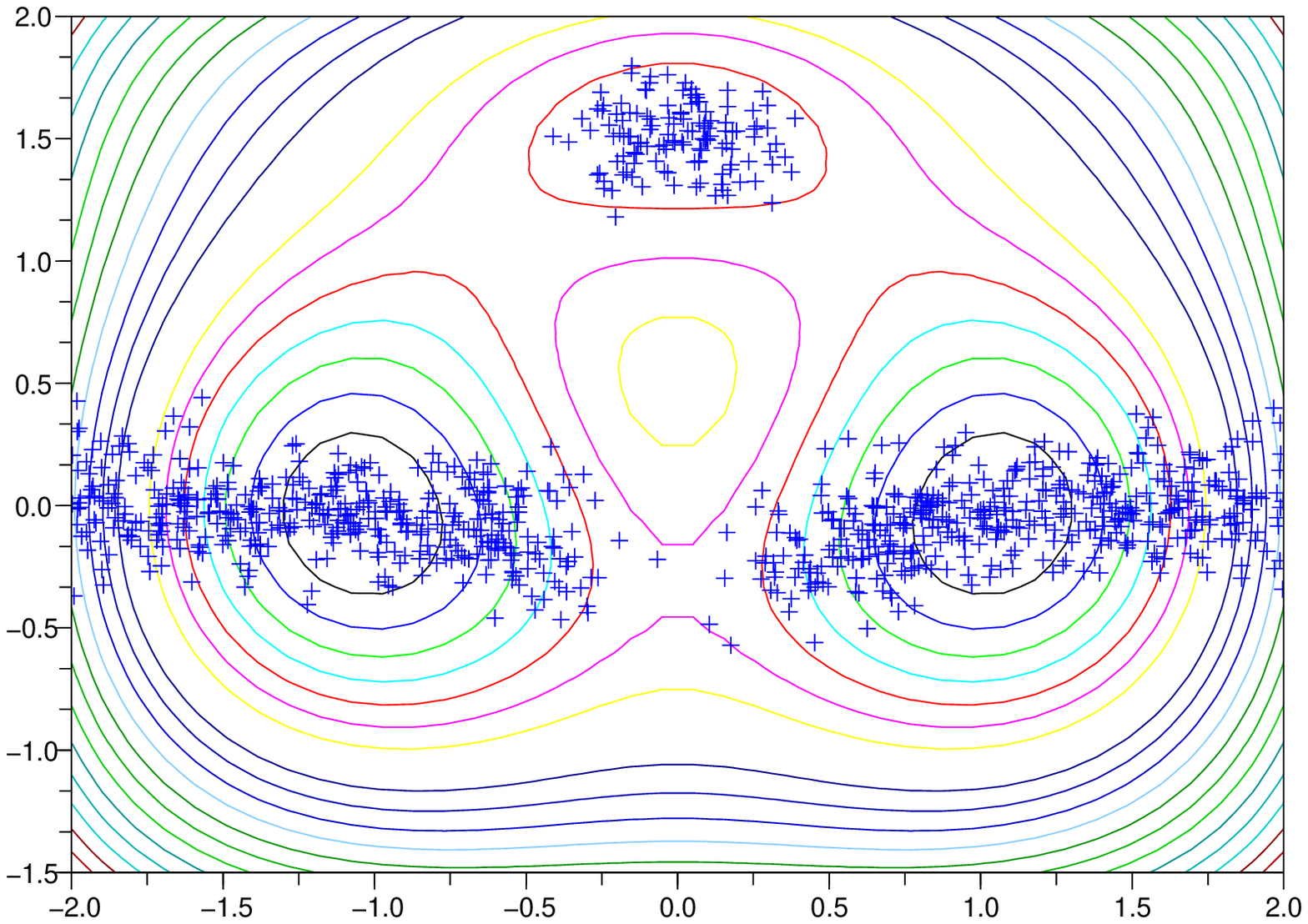,width=80mm,angle=0}}
\caption{Same simulation as on Figure \ref{fig:contre_exemple_part} at time 2000.}\label{fig:exemple2_part}
\end{figure}

\begin{figure}[htbp]
\centerline{\epsfig{file=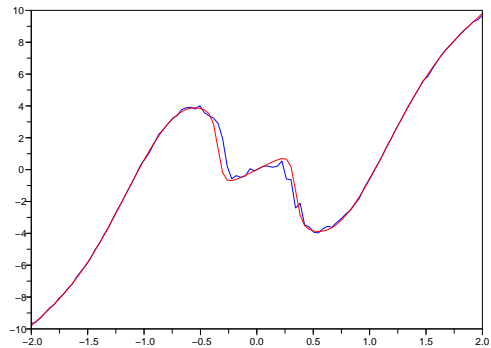,width=80mm,angle=0}}
\caption{Approximation of the free energy corresponding to Figure
  \ref{fig:exemple2_part}. The smooth curve is the free energy, the
  rough one is the approximation.}\label{fig:exemple2_force}
\end{figure}

\bibliography{biblio}
\bibliographystyle{plain}

\end{document}